\documentclass[12pt,a4paper]{article}
\usepackage{amsmath}
\usepackage{amsfonts}
\usepackage{amssymb}
\usepackage{amsthm}
\usepackage{mathtools}
\usepackage{mathabx}
\usepackage{xcolor}
\usepackage{hyperref}
 \usepackage[bottom=4cm]{geometry}

\usepackage[latin1]{inputenc} 
\usepackage[french,english]{babel}  
\usepackage{paralist}
\usepackage{natbib}

\usepackage{bbm}
\usepackage{pifont}

\newtheorem{Assumption}{{\bf Assumption}}[section]
\newtheorem{lemma}{Lemma}[section]
\newtheorem{Remark}{{\bf Remark}}[section]
\newtheorem{definition}{{\bf Definition}}[section]
\newtheorem{theorem}{Theorem}[section]
\newtheorem{proposition}{Proposition}[section]

\newtheorem{remark}{Remark}[section]

\makeatletter
\renewenvironment{proof}[1][\proofname]{%
  \par\pushQED{\qed}\normalfont%
  \topsep6\p@\@plus6\p@\relax
  \trivlist\item[\hskip\labelsep\bfseries#1\@addpunct{.}]%
  \ignorespaces
}{%
  \popQED\endtrivlist\@endpefalse
}
\makeatother






\newcommand{\R}{\mathbb{R}}
\newcommand{\N}{\mathbb{N}}
\newcommand{\I}{\mathbb{I}}
\newcommand{\cf}{\mathcal{F}}

\newcommand{\F}{\mathbb{F}}



\long\def\symbolfootnote[#1]#2{\begingroup\def\thefootnote{\fnsymbol{footnote}}
\footnote[#1]{#2}\endgroup}

\def\esssup{\text{ess sup}} 
\DeclareMathOperator*{\essinf}{ess\,inf}



\begin{document}


\title{Optimal stopping: \\ Bermudan strategies meet  non-linear evaluations  
}

\author{ 
 Miryana Grigorova
 \and
 Marie-Claire Quenez 
 \and
 Peng Yuan  
}
\date{13 January 2023}
\maketitle




\textit{Abstract: }

We address an optimal stopping problem 
over the set of Bermudan-type strategies $\Theta$ (which we understand in a more general sense than the stopping strategies for Bermudan options in finance) and with  non-linear operators (non-linear evaluations) assessing the rewards, under general assumptions on the non-linear operators $\rho$.  We provide a characterization of the value family $V$ in terms of what we call the $(\Theta,\rho)$-Snell envelope of the  pay-off family. We establish a Dynamic Programming Principle. We provide an optimality criterion in terms of a $(\Theta,\rho)$-martingale property of $V$ on a stochastic interval. We investigate the $(\Theta,\rho)$-martingale structure and we show that  the "first time" when the value family coincides with the pay-off family is optimal. The reasoning simplifies in the case where there is a finite number $n$ of pre-described stopping times, where $n$ does not depend on the  scenario $\omega$. We  provide examples of non-linear operators entering our framework.  \\

{\bf Keywords:} optimal stopping, Bermudan stopping strategy, non-linear operator, non-linear evaluation, $g$-expectation, dynamic risk measure,  dynamic programming principle, non-linear Snell envelope family,  Bermudan strategy    

%
%
\section{Introduction}

In the recent years, optimal stopping problems   with non-linear evaluations have gained  an increasing interest in the financial mathematics literature and in the stochastic control literature.  \\

In the \emph{linear} case, a classical reference are the notes by \cite{ElKaroui_notes}. A presentation based on families of random variables indexed by stopping times can be found in \cite{Quenez-Kobylanski}. 
In discrete time, a \emph{non-linear} optimal stopping  with  dynamic monetary utilities was studied in \cite{Schoe}, and with 
 $g$-evaluations (induced by Backward SDEs with Lipschitz driver $g$) -- in \cite{Grigorova-2}.
In continuous time, a  non-linear optimal stopping with dynamic convex  risk measures was studied in \cite{{Bayraktar-2}}; with the so-called  $F$-expectations --in \cite{Bayraktar} and \cite{{Bayraktar-3}};   with g-evaluations--in e.g.,  \cite{Quenez-Sulem},  \cite{Grigorova-1},  \cite{Grigorova-3},  \cite{Klimsiak}; with a focus on applications  to American options in complete or incomplete non-linear financial markets -- in \cite{Rutkowski}, \cite{Grigorova_Am}; with suprema of linear/affine operators over sets of measures- in, e.g. \cite{Ekren}, \cite{Nutz}.\\

In the present paper, we address an optimal stopping problem 
with  \textit{Bermudan-type} strategies and with general  non-linear operators (non-linear evaluations) assessing the rewards. \\

Our purpose is two-fold: 
\begin{itemize}
\item[1.] We consider a modelling framework which is in-between the discrete-time and the continuous-time framework, by focusing on what we call in this paper the Bermudan-type stopping strategies    
   \footnote{Note that this notion is more general than the typical notion of Bermudan strategy appearing in the context of Bermudan options in mathematical finance, where the agent/buyer of the option is allowed to stop only at a pre-described \textbf{finite} number of \textbf{deterministic} times $\{0=t_0\leq t_1\leq \cdots\leq t_n=T\},$ where $n$ is a fixed  finite number (\textbf{independent} of the scenario $\omega$).  }.  
    
   -- In the discrete-time framework with finite terminal horizon $T>0$,  the agent is allowed to stop at a\textbf{ finite number} only of pre-described \textbf{determinstic} times, and  gain/loss processes are indexed by these pre-described determinsitic times.  If we denote by $\{0=t_0\leq t_1\leq ...\leq t_n=T\}$  the pre-defined finite deterministic grid of $n+1$ time points, the stopping strategies of the agent are  of the form $\tau=\sum_{k=0}^{n} t_k {\bf 1}_{A_k},$ where $(A_k)_{k\in\{0,1,...n\}}$ is a partition, such that  $A_k$ is $\cf_{t_k}$-measurable, for each $k\in\{0,1,\ldots,n\}$. Thus, for  almost each scenario $\omega$, the agent  is allowed to stop only at a  finite number of times (provided they do that in a non-anticipative way), where both the number of time instants  (here, $n+1$) and the time instants themselves (here, the $t_k$'s), are the same, whatever the scenario $\omega$.    
   
  --  In the continuous-time framework (with finite horizon $T>0$), the agent is allowed to stop continuously at any time instant $t\in[0,T]$, and the  gain/loss processes are indexed by $t\in[0,T]$. The set of the agent's stopping strategies is the set of all stopping times valued in $[0,T]$.  Thus, for almost each scenario $\omega$, the agent can stop at any time instant (provided they do that in a non-anticipative way).
  
  -- In the \textit{intermediate}  modelling framework of the current paper (with finite terminal horizon $T>0$), in (almost) every scenario $\omega$, the agent is allowed   to stop  at a  finite number of times or infinite countable number of times (provided they do that in a non-anticipative way), where both the number of time instants and the time instants themselves are allowed to depend on the scenario $\omega$.  More specifically, we are given a non-decreasing  sequence of stopping times $(\theta_k)_{k\in\mathbb{N}}$ such that $\lim_{k\to\infty} \theta_k=T. $ This countable stopping grid being given, the agent's  stopping strategies $\tau$  are thus of the form $\tau=\sum_{k=0}^{+\infty} \theta_k {\bf 1}_{A_k}+T{\bf 1}_{\bar A},$ where $\{(A_k)_{k\in\mathbb{N}},\bar A\}$ is a partition, such that   $A_k$ is $\cf_{\theta_k}$-measurable, for each $k\in\mathbb{N},$ and $\bar A$ is $\cf_T$-measurable. We call $\tau$ of this form a Bermudan stopping strategy, and we denote by $\Theta$ the set of Bermudan stopping strategies.  The gain/losses are then "naturally" defined via  families of random variables indexed by the stopping times $\tau$ of this  form.\\
[0.15cm]
This modelling framework is thus closer to the real-life situations where the number of possible decision points depends on the scenario/state of nature, and so do  the decision times themselves, but where the agents do not necessarily act continuously in time.  
   \item[2.] The second purpose of the paper is to allow for  gains/losses being assessed by general non-linear evaluations $\rho = (\rho_{S, \tau}[\cdot])$, while  imposing minimal assumptions  on the  non-linear  operators, under which the results hold. 
\end{itemize}
\noindent
We note also that,  in the above framework,  working with \textbf{families of random variables} $\phi=(\phi(\tau))$ indexed by Bermudan stopping times $\tau$,  allows for an exposition in which it is not necessary to invoke  any results from the theory of stochastic processes.  \\
After formulating  the non-linear optimal stopping with Bermudan-style strategies,  %
we provide a characterization of the value family  in terms of  a suitably defined non-linear Snell envelope. A dynamic programming principle is established under suitable assumptions on the non-linear evaluations. 
An optimality criterion is proven  and existence of optimal stopping times is investigated; it is shown in particular that the first "hitting time" is optimal.  Examples of  non-linear operators, well-known in financial mathematics   and in  stochastic control, entering our framework, are given, such as  the non-linear evaluations induced by Backward SDEs, the non-linear $F$-expectations introduced  by Bayraktar and Yao, as well as the dynamic concave utilities (from the risk measurement literature).    In the appendix, we consider the particular case of finite number of pre-described stopping times  $0=\theta_0,\theta_1,...,\theta_n=T$, where $n$ does not depend on the scenario $\omega$, and provide an explicit construction of the non-linear Snell envelope by backward induction, as well as a simpler proof of the optimality of the first hitting time.\\
[0.3cm]
The remainder of the paper is organized as follows:
In Subsection \ref{defin_sect}, we set the framework and the notation. In Subsection \ref{optimisation_problem}, we formulate the optimisation problem. In Subsection \ref{subsect2}, we characterize the value family of the problem in terms of the $(\Theta, \rho)$-Snell envelope family of the pay-off family. In Subsection \ref{SUBset4}, we show a Dynamic Programming Principle. In Subsection \ref{SUBset5}, we investigate the question of optimal stopping times: we provide some technical lemmas regarding the $(\Theta, \rho)$-martingale property, we provide an optimality criterion, as well as some useful consequences of the DPP, and we show that, under suitable assumptions,  ``the first time'' $\nu_{k}$ when the value family hits the pay-off family is optimal. Section \ref{SectioN3} is dedicated to three examples of non-linear operators from the literature, entering our framework. The Appendix is dedicated to the particular case where in almost each scenario $\omega \in \Omega$, there are $n+1$ pre-described opportunities for stopping, where $n$ does \textbf{not} depend on $\omega \in \Omega$. In this case, we have an explicit construction of the $(\Theta, \rho)$-Snell envelope by backward induction and a simplified proof of the optimality of $\nu_{k}$ (requiring ``less continuity'' on $\rho$).


\section{Optimal stopping with non-linear evaluations and Bermudan strategies}\label{sect_optimal}

\subsection{The framework}\label{defin_sect}

Let $T>0$ be a \textbf{fixed finite} terminal horizon.\\
Let $(\Omega,  \cf, P)$ be a (complete) probability space equipped with a right-continuous complete filtration $\F = \{\mathcal{F}_t \colon t\in[0,T]\}$. \\
In the sequel, equalities and inequalities between random variables are to be understood in the $P$-almost sure sense. Equalities between measurable sets are to be understood in the $P$-almost sure sense.\\
Let $\N$ be the set of natural numbers, including $0$.  Let $\N^*$ be the set of natural numbers, excluding $0$.
Let ($\theta_k)_{k\in\N}$ be a sequence of stopping times satisfying the following properties:
\begin{itemize}
\item[(a)] The sequence $(\theta_k)_{k\in\N}$  is non-decreasing, i.e. 
for all $k\in\N$, $\theta_k\leq \theta_{k+1}$, a.s. 
\item[(b)] $\lim_{k\to\infty}\uparrow \theta_k=T$ a.s. 
\end{itemize}
Moreover, we set $\theta_0=0$. \\
We note that the family of $\sigma$-algebras 
$({\cal F}_{\theta_k})_{k\in\N}$ is non- decreasing (as the sequence $(\theta_{k})$ is non-decreasing).
We denote by ${\Theta}$ the set of stopping times $\tau$ of the form 
\begin{equation}\label{form}
\tau= \sum_{k=0}^{+\infty} \theta_k {\bf 1}_{A_k}+T{\bf 1}_{\bar A},
\end{equation}
where $\{(A_{k})^{+\infty}_{k=0}, \bar{A}\}$ form a partition of $\Omega$ such that, for each $k\in\N$,  $A_{k} \in \mathcal{F}_{\theta_{k}}$, and $\bar{A} \in \mathcal{F}_{T}$.\\
The set ${\Theta}$ can also  be described as the set of stopping times $\tau$ such that for almost all $\omega \in \Omega$, either $\tau(\omega) = T$ or $\tau(\omega) = \theta_{k}(\omega)$, for some $k = k(\omega) \in \mathbbm{N}$.\\
Note that the set $\Theta$ is closed under concatenation, that is, for each $\tau$ $\in$ $\Theta$ and each 
$A \in {\cal F}_{\tau}$, the stopping time $\tau {\bf 1}_{A} + T {\bf 1}_{A^c}$ $\in$ $\Theta$. More generally, for each $\tau$ $\in$ $\Theta$, $\tau'$ $\in$ $\Theta$ and each 
$A \in {\cal F}_{\tau\wedge \tau'}$, the stopping time $\tau {\bf 1}_{A} + \tau' {\bf 1}_{A^c}$ is in  $\Theta$.  The set $\Theta$ is also closed under pairwise minimization (that is, for  each $\tau\in\Theta$ and $\tau'\in\Theta$, we have $\tau\wedge \tau'\in\Theta$) and under pairwise maximization (that is, for  each $\tau\in\Theta$ and $\tau'\in\Theta$, we have $\tau\vee \tau'\in\Theta$). Moreover,
for each non-decreasing (resp. non-increasing) sequence of stopping times 
$(\tau_n)_{n \in {\mathbb N}} \in \Theta^{\mathbbm{N}}$, we have $\lim_{n \rightarrow + \infty} \tau_n$  $\in$ $\Theta$.\\
We note also that all stopping times in $\Theta$ are bounded from above by $T$. 
\begin{Remark}\label{Rk_canonical} We have the following \emph{canonical}  writing of the sets in \eqref{form}: 
\begin{align*}
A_0&=\{\tau=\theta_0\};\\
A_{n+1}&=\{\tau=\theta_{n+1},\theta_{n+1}<T\}\backslash (A_n\cup...\cup A_0); \text { for all } n\in\N^*\\
\bar A&=(\cup_{k=0}^{+\infty} A_k)^c  
\end{align*}  
From this writing, we have: if $\omega\in A_{k+1}\cap \{\theta_k<T\}$, then $\omega\notin \{\tau=\theta_k\}.$ 
\end{Remark}
\noindent
For each $\tau \in \Theta$, we denote by $\Theta_{\tau}$ the set of stopping times $\nu \in 
\Theta$ such that $\nu \geq \tau$ a.s.\, The set $\Theta_{\tau}$ satisfies the same properties as 
the set $\Theta$.
We will refer to the set $\Theta$ as the set of \textbf{Bermudan stopping strategies},
and to the set $\Theta_\tau$ as the set of Bermudan stopping strategies, greater than or equal to $\tau$ (or the set of Bermudan stopping strategies from time $\tau$ perspective).

\begin{definition}\label{def.admi}
We say that a family 
$\phi=(\phi(\tau), \, \tau \in \Theta)$  is \emph{admissible} if it satisfies the following conditions 
\par
1. \quad for all
$\tau \in \Theta$, $\phi(\tau)$ is a real valued random variable, which is  $\mathcal{F}_\tau$-measurable. \par
 2. \quad  for all
$\tau,\tau'\in \Theta$, $\phi(\tau)=\phi(\tau')$ a.s.  on
$\{\tau=\tau'\}$.

Moreover, for $p \in [1, +\infty]$ fixed, we say that an admissible family $\phi$ is $p$-integrable, if for all $\tau \in \Theta$, $\phi(\tau)$ is in $L^{p}$.
\end{definition}
Let $\phi=(\phi(\tau), \, \tau \in \Theta)$ be an admissible family. For a stopping time $\tau$  of the form \eqref{form}, we have 
\begin{equation}\label{formula}
\phi(\tau)= \sum_{k=0}^{+\infty} \phi(\theta_k ){\bf 1}_{A_k}+\phi(T){\bf 1}_{\bar A} \quad {\rm a.s.}
\end{equation}
Given two admissible families $\phi=(\phi(\tau), \, \tau \in \Theta)$ and $\phi'=(\phi'(\tau), \, \tau \in \Theta)$, we 
say that $\phi$ is {\em equal to} $\phi'$ and write $\phi = \phi'$ if, for all $\tau \in \Theta$, 
$\phi(\tau) =\phi'(\tau)$ a.s. We 
say that $\phi$ {\em dominates} $\phi'$ and write $\phi \geq \phi'$
if, for all $\tau \in \Theta$, 
$\phi(\tau) \geq \phi'(\tau)$ a.s.\\
The following remark is  worth noting, as a consequence of the admissibility.  
\begin{Remark}\label{Rmk_on_admissibility}
Let $\phi=(\phi(\tau), \, \tau \in \Theta)$ be an admissible family. Let $\tau\in\Theta$ and let $(\tau_n)\in\Theta^{\N}$  be such that for (almost) each $\omega\in\Omega,$ there exists $n_0=n_0(\omega)$ (depending on $\omega$) satisfying, for all $n\geq n_0(\omega)$, $\tau_n(\omega)=\tau(\omega). $ Then, for all $n\geq n_0(\omega)$, $\phi(\tau_n)(\omega)=\phi(\tau)(\omega)$.\\ We show this by the following reasoning:  for each fixed $n\in\N$, let $\mathcal C_n:=\{\tau_n=\tau\}.$ For each fixed $m\in\N$, let
$\mathcal A_m:=\cap_{n\geq m}\mathcal C_n=\cap_{n\geq m}\{\tau_n=\tau\}.$ Note that  the set  $\mathcal A_m$ might be empty. We have $\cup_{m\in\N}\mathcal A_m=\Omega.$ Moreover, by the admissibility of $\phi$, we have, for each fixed $n\in\N$, $\phi(\tau_n)=\phi(\tau)$, on $\mathcal C_n=\{\tau_n=\tau \}.$ Hence, for each fixed  $m\in\N$,
 \begin{equation}\label{Eq_small_property}
  \text{ for all }n\geq m, \phi(\tau_n)=\phi(\tau)  \text{  on } \mathcal A_m=\cap_{n\geq m}\mathcal C_n.
 \end{equation} 
  Let $\omega\in\Omega$. By assumption, there exists  $n_0=n_0(\omega)$ such  that $\omega\in\mathcal A_{n_0}.$ By property \eqref{Eq_small_property} (applied with $m=n_0$), for all $n\geq n_0$, $\phi(\tau_n)(\omega)=\phi(\tau)(\omega),$ which is the desired conclusion.  

\end{Remark}

%
%

\subsection{The optimisation problem} \label{optimisation_problem}
Let $p \in [1, +\infty]$ be fixed.\\
Let  $\xi = (\xi(\tau), \, \tau \in \Theta)$ be \textbf{$p$-integrable admissible  family} modelling an agent's  dynamic financial position.
\begin{Remark} 
For example, the family  $\xi$ can be defined 
via a given  progressive process $(\xi_t)_{t\in[0,T]}$, corresponding to a given dynamic financial position process. For each $\tau$ $\in$ $\Theta$, we set  $\xi(\tau) := \xi_\tau$.  The  family of random variables $\xi=(\xi(\tau), \, \tau \in \Theta)$ can be shown to be admissible. If for each $k \in \mathbbm{N}$, $\xi_{\theta_{k}} \in L^{p}$, and $\xi_T\in L^p$, then the admissible family $\xi$ is $p$-integrable. The financial interpretation of this example is as follows: 
the agent  can choose his/her strategy only among the stopping times in $\Theta$,  that is, among the stopping times which, for almost each $\omega$, have values in the finite  grid  $\{0, \theta_1(\omega), \ldots, \theta_{n(\omega)}(\omega)=T\}$, where $n(\omega)$ depends on $\omega$, or in the infinite countable grid $\{0, \theta_1(\omega), \ldots, \theta_{n}(\omega), \theta_{n+1}(\omega),\ldots, T\}.$  In this example, the financial position which is actually taken into account in the problem
corresponds to the values of the process $(\xi_t)$ only at times $0, \theta_1,..., \theta_{n}, \theta_{n+1},..., T$.
\end{Remark}
\noindent
The \textit{minimal risk at time} $0$ over all Bermudan stopping strategies  is defined by:
\begin{equation}\label{optimal_stopping_problem_0bis}
\tilde V(0):= \inf_{\tau\in \Theta} \tilde \rho_{0,\tau}(\xi(\tau))=  -V(0),
\end{equation}
where
\begin{equation}\label{optimal_stopping_problembis}
V(0):=\sup_{\tau\in\Theta} \rho_{0,\tau}[\xi(\tau)],
\end{equation}
and where $\rho_{0,\tau}[\cdot]=-\tilde\rho_{0,\tau}[\cdot].$\\
[0.2cm]
Let $p \in [1, +\infty]$. We introduce the following properties  on the non-linear operators $\rho_{S, \tau}[\cdot]$, which will appear in the sequel. \\
[0.2cm] 
For
$S\in\Theta$, $S'\in\Theta$, $\tau\in\Theta$, for $\eta$, $\eta_{1}$ and $\eta_2$ in $L^p(\cf_\tau)$, for $\xi=(\xi(\tau))$ an admissible p-integrable family: 
\begin{compactenum}[(i)]
\item[(i)] $\rho_{S,\tau}: L^p(\cf_\tau)\longrightarrow L^p(\cf_S)$ 
\item[(ii)] \emph{(admissibility)} $\rho_{S,\tau}[\eta]=\rho_{S',\tau}[\eta]$ a.s.  on $\{S=S'\}$. 
\item[(iii)] \emph{(knowledge preservation)}
$\rho_{\tau,S}[\eta]=\eta, $
for all $\eta\in L^p(\cf_S)$, all $\tau\in\Theta_S.$
\item[(iv)] \emph{(monotonicity)}  $\;\rho_{S,\tau}[\eta_1]\leq \rho_{S,\tau}[\eta_2]$ a.s., if  $\eta_1\leq \eta_2$ a.s. 
\item[(v)] \emph{(consistency)}  $\;\rho_{S,\theta}[\rho_{\theta,\tau}[\eta]]= \rho_{S,\tau}[\eta]$, for all $S, \theta, \tau$ in $\Theta$ such that $S\leq \theta\leq \tau$ a.s. 
\item [(vi)] \emph{("generalized zero-one law") }$\; I_A\rho_{S,\tau}[\xi(\tau)] =I_A\rho_{S,\tau'}[\xi(\tau')],$ for all $A\in\cf_S$, $\tau\in \Theta_{S}$, $\tau'\in\Theta_{S}$ such that $\tau=\tau'$ on $A$.  

\item[(vii)] \emph{(monotone Fatou property with respect to terminal condition)}\\
 $\rho_{S, \tau}[\eta] \leq \liminf_{n \to +\infty} \rho_{S, \tau}[\eta_{n}]$, for $(\eta_{n}), \eta$ such that $(\eta_{n})$ is non-decreasing, $\eta_{n} \in L^{p}(\cf_{\tau})$, $\sup_{n}\eta_{n} \in L^{p}$, and $\lim_{n \to +\infty} \uparrow \eta_{n} = \eta$ a.s.\\
\end{compactenum}
\noindent
Fatou property is often assumed in the literature on risk measures (particularly in the case where $p = +\infty$).\\
Note also that if $\rho$ satisfies monotonocity (iv) and monotone Fatou property with respect to  terminal condition (vii), then $\rho_{S, \tau}[\eta]= \lim_{n \to +\infty} \rho_{S, \tau}[\eta_{n}]$, for $(\eta_{n}), \eta$ such that $(\eta_{n})$ is non-decreasing, $\eta_{n} \in L^{p}(\cf_{\tau})$, $\sup_{n}\eta_{n} \in L^{p}$, and $\lim_{n \to +\infty} \uparrow \eta_{n} = \eta$ a.s. Indeed, by monotonicity of $\rho_{S, \tau}[\cdot]$, we have $\rho_{S, \tau}[\eta_n]\leq \rho_{S, \tau}[\eta]$; hence, $\limsup_{n\to+\infty}\rho_{S, \tau}[\eta_n]\leq \rho_{S, \tau}[\eta]. $ On the other hand, by   
(vii), $\rho_{S, \tau}[\eta]\leq \liminf_{n\to+\infty}\rho_{S, \tau}[\eta_n]. $ Hence, $\rho_{S, \tau}[\eta]= \lim_{n\to+\infty}\rho_{S, \tau}[\eta_n]$. Such type of property  is also known in the literature (e.g. risk measures) as continuity from below. \\
Let us emphasize that  no assumptions of convexity (or concavity) or translation invariance of the non-linear operators $\rho$ are made.

%
%
\subsection{$(\Theta,\rho)$-Snell envelope family and optimal stopping}\label{subsect2}

As is usual in optimal control, we embed the above optimization problem \eqref{optimal_stopping_problembis} in a larger class of problems by 
considering for each $\nu \in \Theta$, the random variable $V(\nu)$, where 
\begin{equation}\label{optimal_stopping_problem_kbis}
V(\nu):=\esssup_{\tau\in \Theta_{\nu}} \rho_{\nu,\tau}[\xi(\tau)].
\end{equation} 
 We note that, if $\rho$ satisfies the property of knowledge preservation (property (iii)), then $V(T)=\rho_{T,T}[\xi(T)]=\xi(T).$
\begin{lemma}\label{Lemma_admissible}(Admissibility of $V$)
Under the assumption of admissibility (ii) and ``generalized zero-one law'' (vi) on the non-linear operators, the family of random variables
$V:=  (V(\nu), \, \nu \in \Theta)$ defined in \eqref{optimal_stopping_problem_kbis} is \emph{admissible} in the sense of Definition \ref{def.admi}.

\end{lemma}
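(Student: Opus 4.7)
The plan is to verify the two defining conditions of admissibility from Definition \ref{def.admi} directly for the family $V$.

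\textbf{Measurability (condition 1).} For each $\nu \in \Theta$ and each $\tau \in \Theta_\nu$, property (i) guarantees that $\rho_{\nu,\tau}[\xi(\tau)] \in L^p(\cf_\nu)$, so every random variable inside the essential supremum defining $V(\nu)$ is $\cf_\nu$-measurable. Since the essential supremum of a family of $\cf_\nu$-measurable random variables is itself $\cf_\nu$-measurable (and integrability issues can be handled by truncation if needed), $V(\nu)$ is $\cf_\nu$-measurable. I would dispatch this in one short paragraph.

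\textbf{Consistency on $\{\nu = \nu'\}$ (condition 2).} Fix $\nu, \nu' \in \Theta$ and set $A := \{\nu = \nu'\} \in \cf_{\nu \wedge \nu'}$. The key step is a symmetric concatenation argument: given any $\tau \in \Theta_\nu$, define
\[
\tilde{\tau} := \tau \mathbf{1}_A + T \mathbf{1}_{A^c}.
\]
Since $A \in \cf_\nu \subseteq \cf_\tau$ and $T \in \Theta$, the closure of $\Theta$ under concatenation (stated in Subsection~\ref{defin_sect}) gives $\tilde{\tau} \in \Theta$; moreover $\tilde{\tau} \geq \nu'$ a.s.\ (on $A$, $\tilde{\tau} = \tau \geq \nu = \nu'$; on $A^c$, $\tilde{\tau} = T \geq \nu'$), so $\tilde{\tau} \in \Theta_{\nu'}$. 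Now I would combine three facts:
\begin{compactenum}[(a)]
\item $\tau = \tilde{\tau}$ on $A$ with $A \in \cf_\nu$, so by property (vi), $\mathbf{1}_A \rho_{\nu,\tau}[\xi(\tau)] = \mathbf{1}_A \rho_{\nu,\tilde{\tau}}[\xi(\tilde{\tau})]$;
\item $\nu = \nu'$ on $A$, so by property (ii), $\mathbf{1}_A \rho_{\nu,\tilde{\tau}}[\xi(\tilde{\tau})] = \mathbf{1}_A \rho_{\nu',\tilde{\tau}}[\xi(\tilde{\tau})]$;
\item $\rho_{\nu',\tilde{\tau}}[\xi(\tilde{\tau})] \leq V(\nu')$ a.s.
\end{compactenum}
Chaining these gives $\mathbf{1}_A \rho_{\nu,\tau}[\xi(\tau)] \leq \mathbf{1}_A V(\nu')$ for every $\tau \in \Theta_\nu$, and taking the essential supremum over $\tau \in \Theta_\nu$ on the left yields $\mathbf{1}_A V(\nu) \leq \mathbf{1}_A V(\nu')$. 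Exchanging the roles of $\nu$ and $\nu'$ gives the reverse inequality, so $V(\nu) = V(\nu')$ on $A$.

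The only mildly subtle point is the justification that $\mathbf{1}_A$ can be pulled through the essential supremum (for nonnegative inequalities this is immediate by rewriting $\rho_{\nu,\tau}[\xi(\tau)] \leq V(\nu') + (1-\mathbf{1}_A)\cdot \infty$ or simply by the definition of $\esssup$ applied set-wise on $A$), but no additional hypothesis beyond (i), (ii), (vi) is needed. The main obstacle, such as it is, is choosing the right ``replacement'' stopping time $\tilde{\tau}$ that both lies in $\Theta_{\nu'}$ and agrees with $\tau$ on $A$; the concatenation with $T$ does exactly this, and closure of $\Theta$ under concatenation is precisely the structural property that makes the argument go through.
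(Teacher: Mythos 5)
Your proof is correct and follows essentially the same route as the paper's: the paper also sets $A=\{\nu=\nu'\}$, replaces $\tau$ by the concatenation $\tau\mathbf{1}_A+T\mathbf{1}_{A^c}\in\Theta_{\nu'}$, and combines the admissibility property (ii) and the generalized zero-one law (vi) before taking the essential supremum and invoking symmetry. The only difference is the order in which (ii) and (vi) are applied, which is immaterial.
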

\noindent
The proof uses arguments similar to those of  Lemma 8.1 in \cite{Grigorova-3}, combined with some properties of the non-linear operators $\rho$.

\begin{proof} 
Property 1. of the definition of admissibility follows from the definition of the essential supremum, the random variables of the family   $(\rho_{\nu,\tau}[\xi(\tau)], \tau\in\Theta_\nu)$ being $\cf_\nu$-measurable. \\
Let us prove Property 2. Let $\nu$ and $\nu'$ be two stopping times in $\Theta$. We set $A:=\{\nu=\nu'\}$ and we show that $V(\nu)=V(\nu')$, $P$-a.s. on $A$. 
We have   
\begin{equation}\label{eq_toto1}
\begin{aligned}
\I_AV(\nu)&=\I_A\esssup_{\tau\in \Theta_{\nu}} \rho_{\nu,\tau}[\xi(\tau)] =\esssup_{\tau\in \Theta_{\nu}} \I_A\rho_{\nu,\tau}[\xi(\tau)]=\esssup_{\tau\in \Theta_{\nu}} \I_A\rho_{\nu',\tau}[\xi(\tau)],
\end{aligned}
\end{equation}
where we have used the admissibility property on $\rho$ for the last equality.\\
Let $\tau\in\Theta_\nu$. We set $\tau_A:= \tau\I_A+T\I_{A^c}$. We note that $\tau_A\in\Theta_{\nu'}$ and $\tau_A=\tau$ p.s. on $A$. Using this, the admissibility of the family $\xi$, and the generalized zero-one law property of $\rho$, we get
$
\I_A\rho_{\nu',\tau}[\xi(\tau)]=\I_A\rho_{\nu',\tau_A}[\xi(\tau_A)]\leq \I_AV(\nu').
$
As $\tau\in\Theta_\nu$ is arbitrary, we conclude that 
$\esssup_{\tau\in \Theta_{\nu}}\I_A\rho_{\nu',\tau}[\xi(\tau)]\leq \I_AV(\nu').$
Combining this inequality with \eqref{eq_toto1} gives 
$\I_AV(\nu)\leq \I_AV(\nu').$ We obtain the converse inequality by interchanging the roles of $\nu$ and $\nu'$. 
\end{proof}
\noindent
Under the assumptions of the above lemma, the following remark holds true.

\begin{Remark}\label{Rmk_consequence_admissible}
As a consequence of the \textbf{admissibility} of the value family $V$, we have: 
for each  $k\in\N, $ it holds
$V(\nu)= V (\theta_k)$ a.s. on $\{\nu=\theta_k\}$ and $V(\nu)=V(T)$ a.s. on $\{\nu=T\}$.  Hence, under the assumptions of Lemma \ref{Lemma_admissible}, for $\nu\in\Theta$ of the form $\nu=  \sum_{k=0}^{+\infty} \theta_k {\bf 1}_{A_k}+ T{\bf 1}_{\bar A} $, we have $V(\nu)=\sum_{k=0}^{+\infty} V(\theta_k ){\bf 1}_{A_k}+V(T){\bf 1}_{\bar A} $.
\end{Remark}

\begin{Remark}\label{TEXRemark33}
1. Under the assumption of knowledge preservation (iii) on $\rho$, we have $V(\theta_k) \geq \xi(\theta_k)$, for each $k\in\N.$\\
Indeed, $V(\theta_{k}) = \esssup_{\tau \in \Theta_{\theta_{k}}}\rho_{\theta_{k}, \tau}[\xi(\tau)] \geq \rho_{\theta_{k}, \theta_{k}}[\xi(\theta_{k})]$, and by the property (iii) of the non-linear operators, we have $\rho_{\theta_{k}, \theta_{k}}[\xi(\theta_{k})] = \xi(\theta_{k})$. Hence, $V(\theta_{k}) \geq \xi(\theta_{k})$.\\
[0.2cm]
2.  If, moreover, $\rho$ satisfies the properties of  admissibility (ii) and "generalized" zero-one law (vi), then, for each $\tau\in\Theta,$  $V(\tau) \geq \xi(\tau)$. \\
This follows from the first statement of the remark, and from the admissibility of $\xi$ and that of $V$ (cf. Lemma \ref{Lemma_admissible} and Remark \ref{Rmk_consequence_admissible}).  

\end{Remark}

\noindent

\noindent
\noindent
Now, let us introduce the notion of \emph{$(\Theta, \rho)$-(super)martingale family}.
\begin{definition}\label{def_supermartingaleBIS}
Let $\phi=(\phi(\tau), \, \tau \in \Theta)$ be a \emph{$p$-integrable admissible} family.
 We say that $\phi$
 is a \emph{$(\Theta, \rho)$-supermartingale} (resp. \emph{$(\Theta, \rho)$--martingale}) \emph{ family} if 
 for all $\sigma, \tau$ in $\Theta$ such that $\sigma\leq\tau$ a.s., we have 
$$\rho_{\sigma,\tau}[\phi({\tau})]\leq \phi({\sigma}) \text{ (resp. }=\phi(\sigma)) \text{ a.s.} $$
%
\end{definition}
\noindent
We introduce the following integrability assumption on $V$, which is assumed in the sequel. 
\begin{Assumption}\label{Hyp3}
For each   $\nu \in \Theta$, the random variable $V(\nu)$ is in $L^p$.
\end{Assumption}
\begin{Remark} \label{TEXRemaRk24}
Let $\rho$ satisfy the assumptions of admissibility (ii), knowledge preservation (iii), "generalized" zero-one law (vi), and monotonicity (iv). If the pay off family $\xi = (\xi(\tau))_{\tau \in \Theta}$ is $p$- integrable and dominated from above by a $p$-integrable $(\Theta, \rho)$-martingale $M$, then the value family $V$ satisfies the integrability Assumption \ref{Hyp3}.\\
Indeed, let $S \in \Theta$ be given. By Remark \ref{TEXRemark33}, Statement 2, $V(S) \geq \xi(S)$.\\ 
On the other hand, by assumption on $\xi$, for each $\tau \in \Theta_{S}$, $\xi(\tau) \leq M(\tau)$. Hence, by monotonicity of $\rho$, we have $\rho_{S, \tau}[\xi(\tau)] \leq \rho_{S, \tau}[M(\tau)] = M(S),$
where we have used the $(\Theta, \rho)$-martingale property of $M$ for the last equality.\\
So, $V(S) = \esssup_{\tau \in \Theta_{S}}\rho_{S, \tau}[\xi(\tau)] \leq M(S)$. Hence, we get $\xi(S) \leq V(S) \leq M(S)$, which proves that $V(S) \in L^{p}$. Therefore, Assumption \ref{Hyp3} is satisfied.

\end{Remark}
\noindent
We will see in Section \ref{SectioN3}, concrete examples for which this integrability assumption on $V$ is satisfied.

\begin{theorem}\label{Thm_Snell}
1. \emph{($(\Theta,\rho)$-supermartingale)} Under the assumptions of admissibility (ii), consistency (v), ``generalized zero-one law'' (vi) and monotone Fatou property with respect to the terminal condition (vii) on the non-linear operators, the value family $V$ is a $(\Theta, \rho)$-supermartingale family.\\
[0.3cm]
2. \emph{($(\Theta,\rho)$-Snell envelope)} If moreover the non-linear operators also satisfy the properties of knowledge preservation (iii) and monotonicity (iv), the value family $V$ is equal to the $(\Theta,\rho)$-Snell envelope of the family  $\xi$, that is,  the smallest $(\Theta,\rho)$-supermartingale family dominating the family $\xi= (\xi(\tau), \, \tau \in \Theta)$.

\end{theorem}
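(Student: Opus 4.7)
The plan is to first establish part 1 (the $(\Theta,\rho)$-supermartingale property of $V$) via an essential-supremum approximation by an increasing sequence, followed by an application of consistency and the monotone Fatou property; and then deduce part 2 (that $V$ is the Snell envelope) from the supermartingale property, the domination $V \geq \xi$, and a direct comparison argument.

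For part 1, fix $\sigma, \tau \in \Theta$ with $\sigma \leq \tau$ a.s. The key preliminary step is to show that the family $\{\rho_{\tau,\tau'}[\xi(\tau')] : \tau' \in \Theta_\tau\}$ is directed upward, hence by a classical result its essential supremum $V(\tau)$ can be written as the a.s. increasing limit of a sequence $\eta_n = \rho_{\tau,\tau'_n}[\xi(\tau'_n)]$ with $\tau'_n \in \Theta_\tau$. To verify directedness, given $\tau'_1, \tau'_2 \in \Theta_\tau$, I set $A := \{\rho_{\tau,\tau'_1}[\xi(\tau'_1)] \geq \rho_{\tau,\tau'_2}[\xi(\tau'_2)]\} \in \cf_\tau$ and define $\tau'_3 := \tau'_1 \I_A + \tau'_2 \I_{A^c}$. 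Since $A \in \cf_\tau \subset \cf_{\tau'_1 \wedge \tau'_2}$ and $\Theta_\tau$ is closed under such concatenation, $\tau'_3 \in \Theta_\tau$; the ``generalized zero-one law'' (vi) then gives $\rho_{\tau,\tau'_3}[\xi(\tau'_3)] = \rho_{\tau,\tau'_1}[\xi(\tau'_1)] \vee \rho_{\tau,\tau'_2}[\xi(\tau'_2)]$.

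Once the non-decreasing approximating sequence $\eta_n \uparrow V(\tau)$ is in hand, and noting that $\eta_n \in L^p(\cf_\tau)$ and $\sup_n \eta_n = V(\tau) \in L^p$ by Assumption \ref{Hyp3}, I apply the monotone Fatou property (vii) to obtain
\[
\rho_{\sigma,\tau}[V(\tau)] \leq \liminf_{n\to\infty} \rho_{\sigma,\tau}[\eta_n] = \liminf_{n\to\infty} \rho_{\sigma,\tau}\bigl[\rho_{\tau,\tau'_n}[\xi(\tau'_n)]\bigr] = \liminf_{n\to\infty} \rho_{\sigma,\tau'_n}[\xi(\tau'_n)],
\]
where the last equality uses consistency (v). Each term on the right is bounded by $V(\sigma)$ since $\tau'_n \in \Theta_\tau \subset \Theta_\sigma$, yielding $\rho_{\sigma,\tau}[V(\tau)] \leq V(\sigma)$, which is the supermartingale inequality. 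Admissibility of $V$ follows from Lemma \ref{Lemma_admissible}, so $V$ is indeed a $(\Theta,\rho)$-supermartingale family.

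For part 2, domination $V \geq \xi$ has already been observed in Remark \ref{TEXRemark33}, part 2, using (ii), (iii), (vi). It remains to show minimality: if $V'$ is any $(\Theta,\rho)$-supermartingale family dominating $\xi$, then $V' \geq V$. For fixed $\sigma \in \Theta$ and arbitrary $\tau \in \Theta_\sigma$, the supermartingale property of $V'$ combined with monotonicity (iv) and $V'(\tau) \geq \xi(\tau)$ gives $V'(\sigma) \geq \rho_{\sigma,\tau}[V'(\tau)] \geq \rho_{\sigma,\tau}[\xi(\tau)]$; taking essential supremum over $\tau \in \Theta_\sigma$ yields $V'(\sigma) \geq V(\sigma)$, completing the argument.

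The main technical obstacle is the first step: justifying the existence of the non-decreasing approximating sequence $\eta_n$, which requires carefully exploiting the ``generalized zero-one law'' (vi) together with admissibility (ii) to ensure stability under pairwise maximization inside the class $\{\rho_{\tau,\tau'}[\xi(\tau')]\}$; everything else then runs smoothly from consistency and Fatou.
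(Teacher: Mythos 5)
Your proposal is correct and follows essentially the same route as the paper: the directedness/pairwise-maximization argument via the generalized zero-one law is exactly the paper's maximizing-sequence lemma (Lemma \ref{Maximizing sequence}), and the subsequent combination of the monotone Fatou property (vii) with consistency (v), plus the comparison argument for minimality, mirrors the paper's proof of Theorem \ref{Thm_Snell} step by step.
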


To prove this theorem, we first state a useful lemma.

\begin{lemma}\label{Maximizing sequence}(Maxmizing sequence lemma)
Under the assumption of ``generalized zero-one law'' (vi) on the non-linear operators, there exists a maximizing sequence for the value $V(S)$ of problem \eqref{optimal_stopping_problem_kbis}. 

\end{lemma}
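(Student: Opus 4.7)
The plan is to reduce this to the classical theorem that an essential supremum over a family of random variables which is upward directed (stable under pairwise maximum) admits a non-decreasing maximizing sequence from within the family. So the whole content of the lemma is to verify that the family $\{\rho_{S,\tau}[\xi(\tau)] : \tau \in \Theta_S\}$ is upward directed, and the main leverage will come from property (vi) combined with the concatenation stability of $\Theta$ already recorded in Subsection \ref{defin_sect}.

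Concretely, given $\tau, \tau' \in \Theta_S$, I would form the set
\[
A := \{\rho_{S,\tau}[\xi(\tau)] \geq \rho_{S,\tau'}[\xi(\tau')]\},
\]
which is $\cf_S$-measurable, hence in $\cf_{\tau \wedge \tau'}$ since $S \leq \tau \wedge \tau'$. By the concatenation closure of $\Theta$ recalled in the text, the stopping time $\tau'' := \tau \mathbf{1}_A + \tau' \mathbf{1}_{A^c}$ belongs to $\Theta_S$. Then I would apply the ``generalized zero-one law'' (vi) twice: once on $A$ with the pair $(\tau'', \tau)$, which coincide on $A$, to get $\mathbf{1}_A \rho_{S,\tau''}[\xi(\tau'')] = \mathbf{1}_A \rho_{S,\tau}[\xi(\tau)]$, and once on $A^c$ with the pair $(\tau'', \tau')$ to get $\mathbf{1}_{A^c} \rho_{S,\tau''}[\xi(\tau'')] = \mathbf{1}_{A^c} \rho_{S,\tau'}[\xi(\tau')]$. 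Adding these yields
\[
\rho_{S,\tau''}[\xi(\tau'')] = \rho_{S,\tau}[\xi(\tau)] \vee \rho_{S,\tau'}[\xi(\tau')] \quad \text{a.s.},
\]
which is precisely upward directedness of the family indexed by $\Theta_S$.

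Once directedness is in hand, I would invoke the classical essential supremum result (for instance, Neveu's lemma, or Proposition VI-1-1 in the Dellacherie–Meyer-style formulation): there exists a sequence $(\tau_n)_{n \in \N} \subset \Theta_S$ such that $\rho_{S,\tau_n}[\xi(\tau_n)]$ is non-decreasing in $n$ and converges a.s. to $V(S)= \esssup_{\tau \in \Theta_S}\rho_{S,\tau}[\xi(\tau)]$. This sequence is then a maximizing sequence in the sense required.

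I expect the only mildly subtle step to be the verification that $A \in \cf_{\tau \wedge \tau'}$, which is needed in order to apply the concatenation property as stated in the text; this is immediate from $A \in \cf_S$ and $S \leq \tau \wedge \tau'$. The rest is a routine application of (vi) and of the standard essential supremum machinery, so no further assumption on $\rho$ (e.g.\ monotonicity or consistency) is needed for this lemma.
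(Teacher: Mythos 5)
Your proof is correct and follows essentially the same route as the paper: you show the family $(\rho_{S,\tau}[\xi(\tau)])_{\tau\in\Theta_S}$ is stable under pairwise maximization by concatenating $\tau$ and $\tau'$ on the comparison set $A\in\cf_S$ and applying the ``generalized zero-one law'' (vi), then invoke the classical essential supremum result to extract a non-decreasing maximizing sequence. This is exactly the paper's argument, so no further comment is needed.
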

\noindent
The proof of this lemma is similar to that of Lemma 2.3 in \cite{Grigorova-4} and is given for the convenience of the reader. 
\begin{proof}
It is sufficient to show that the family $(\rho_{S,\tau} [\xi(\tau)])_{\tau\in\Theta_S}$  is stable under pairwise maximization. The result then follows by a well-known property of the essential supremum. 
Let $\tau\in\Theta_S$ and $\tau'\in\Theta_S$. Set $A:=\{\rho_{S,\tau'} [\xi(\tau')]\leq \rho_{S,{\tau}} [\xi(\tau)]\} $ and  $\nu:=\tau \I_A+\tau'\I_{A^c}. $ Trivially, $A\in\mathcal{F}_S.$ Moreover,  $\nu\in\Theta_S$ (cf. properties of the set $\Theta_S$).  Also,   $\nu=\tau$ on $A$,  $\nu=\tau'$ on $A^c$.  By the "generalized zero-one law" of the non-linear operators $\rho$, we get
\begin{equation}\label{eq_computation_0}
\begin{aligned}
 \rho_{S,{\nu}} [\xi(\nu)]=\rho_{S,{\nu}} [\xi(\nu)]\I_A+\rho_{S,{\nu}} [\xi(\nu)]\I_{A^c}&=\rho_{S,{\tau}} [\xi(\tau)]\I_A+\rho_{S,{\tau'}} [\xi(\tau')]\I_{A^c}\\
 &=\max\big(\rho_{S,{\tau}} [\xi(\tau)],\rho_{S,{\tau'}} [\xi(\tau')]\big).
\end{aligned}
\end{equation}
This shows the stability under pairwise maximization of the value family (indexed by $\Theta_S$).  
\end{proof}
\noindent
Let us now show the theorem. The idea of the proof is similar to that of Theorem 8.2 in \cite{Grigorova-3}. The properties on $\rho$ being weakened here, we give the proof for clarity and completeness.

\begin{proof}[Proof of Theorem \ref{Thm_Snell}]
By Lemma \ref{Lemma_admissible}, the value family $V$  is admissible. By Assumption \ref{Hyp3}, the value family $V$ is $p$-integrable.\\
Let now $S\in\Theta$ and $\tau\in\Theta_S$. To show the $(\Theta,\rho)$-supermartingale property of the value family, it remains to show $\rho_{S,\tau}[V(\tau)]\leq V(S)$ a.s. By the maximizing sequence lemma (Lemma \ref{Maximizing sequence}), there exists a sequence $(\tau_{p}) \in (\Theta_{\tau})^{\mathbbm{N}}$, such that $V(\tau) = \lim_{p \to +\infty}\uparrow\rho_{\tau, \tau_{p}}[\xi(\tau_{p})]$. Hence, we have
$$\rho_{S, \tau}[V(\tau)] = \rho_{S, \tau}[\lim_{p \to +\infty}\uparrow\rho_{\tau, \tau_{p}}[\xi(\tau_{p})]] \leq \liminf_{p \to +\infty}\rho_{S, \tau}[\rho_{\tau, \tau_{p}}[\xi(\tau_{p})]],$$
where we have used the monotone Fatou property with respect to terminal condition (vii) to obtain the inequality. By the consistency property, we have
$$\liminf_{p \to +\infty}\rho_{S, \tau}[\rho_{\tau, \tau_{p}}[\xi(\tau_{p})]] = \liminf_{p \to +\infty}\rho_{S, \tau_{p}}[\xi(\tau_{p})] \leq V(S),$$
the last inequality being due to $\Theta_{\tau} \subset \Theta_{S}$. We conclude that $\rho_{S, \tau}[V(\tau)] \leq V(S)$. Hence, the value family $V$ is a $(\Theta, \rho)$-supermartingale family. This proves Statement 1 of the theorem.\\
[0.3cm]
Let us now show Statement 2. By Remark \ref{TEXRemark33}, Statement 2,  we have $V \geq \xi$. By Statement 1, we have that $V$ is a $(\Theta, \rho)$-supermartingale. It remains to show that $V$ is the smallest. Let $(V'(\tau))$ be another $(\Theta, \rho)$-supermartingale family, such that, for each $\tau \in \Theta$,  $V'(\tau) \geq \xi(\tau)$ (a.s.). Let $S \in \Theta$, $\tau \in \Theta_{S}$. By the monotonicity of the non-linear operators $\rho$, we have
$$\rho_{S, \tau}[V'(\tau)] \geq \rho_{S, \tau}[\xi(\tau)].$$
On the other hand, as $(V'(\tau))$ is a $(\Theta, \rho)$-supermartingale family, 
 we have 
 $V'(S) \geq \rho_{S, \tau}[V'(\tau)].$
Hence, $V'(S) \geq \rho_{S, \tau}[V'(\tau)] \geq \rho_{S, \tau}[\xi(\tau)].$ By taking the essential supremum over $\tau \in \Theta_{S}$ in this  inequality, we get
$$V'(S)  \geq \esssup_{\tau \in \Theta_{S}} \rho_{S, \tau}[\xi(\tau)] = V(S) \text{ (a.s.)}.$$
The proof is complete. 
\end{proof}

\subsection{The strict value family and the Dynamic Programming Principle (DPP)} \label{SUBset4}
\begin{definition}[Dynamic Programming Principle] We say that an admissible $p$-integrable family satisfies the Dynamic Programming Principle (abridged DPP), if the following property holds true:\\ 
 For all $k\in\mathbb{N}$,
\begin{equation}\label{Eq_def_DPP}
\phi(\theta_k)=\max\big(\xi(\theta_k), \rho_{\theta_k,\theta_{k+1}}[\phi(\theta_{k+1})]\big),
\end{equation}
and $\phi(T)=\xi(T).$
\end{definition}
\noindent
The purpose of this sub-section is to investigate under which assumptions on $\rho$, the DPP holds. To do this, we are first interested in ``what happens on the right of $V(\theta_{k})$'', for each $k \in \mathbbm{N}$.\\
[0.2cm]
Let $k \in \mathbbm{N}$ be fixed. We define
$$\Theta_{\theta_{k}^{+}} \coloneqq \{\tau \in \Theta_{\theta_{k}}: \tau > \theta_{k} \text{ on } \{\theta_{k} < T\} \text{ and } \tau = T \text{ on } \{\theta_{k} = T\}\},$$
and we define \textbf{the strict value} $V^{+}(\theta_{k})$ at $\theta_{k}$ by:
$$V^{+}(\theta_{k}) \coloneqq \esssup_{\tau \in \Theta_{\theta_{k}}^{+}} \rho_{\theta_{k}, \tau}[\xi(\tau)].$$
\begin{Remark}
We have $\Theta_{\theta_{k}^{+}} = \Theta_{\theta_{k+1}}$.\\
Indeed, let $\tau \in \Theta_{\theta_{k}^{+}}$. Then $\tau$ can be written as: 
$$\tau = \sum^{+\infty}_{i = k+1} \theta_{i}\mathbbm{1}_{A_{i}\cap\{\theta_{k}<T\}} +T \times \mathbbm{1}_{\bar A\cap \{\theta_{k}<T\}}+ T \times \mathbbm{1}_{\{\theta_{k}=T\}},$$
where $\{(A_{i})_{i \geq k+1},\bar A\}$ is a partition of $\Omega$ such that for each  $i \geq k+1$, $A_{i} \in \mathcal{F}_{\theta_{i}}$, and $\bar A\in\cf_T.$\\
We set $B_{i} \coloneqq A_{i} \cap \{\theta_{k} < T\}$, for $i \geq k+1$, $\bar B\coloneqq \bar A\cap \{\theta_{k} < T\} $and $B_{k} \coloneqq \{\theta_{k} = T\}$. We have $\{(B_{i})_{i \geq k},\bar B\}$ form a partition of $\Omega$; for each $i \geq k$, $B_{i}$ is $\mathcal{F}_{\theta_{i}}$-measurable, and $\bar B$ is $\cf_T$-measurable.  Moreover, $\tau \geq \theta_{k+1}$ (indeed, $\tau \geq \theta_{k+1}$ on $\{\theta_{k}<T\}$ and $\tau = T = \theta_{k} = \theta_{k+1}$ on $\{\theta_{k} = T\}$). Hence, $\tau \in \Theta_{\theta_{k+1}}$.\\
Conversely, let $\tau \in \Theta_{\theta_{k+1}}$; then, $\tau$ can be written as:
\begin{align*}
\tau &= \sum_{i=k+1}^{+\infty}\theta_{i}\mathbbm{1}_{A_{i} \cap \{\theta_{k} < T\}} + T\mathbbm{1}_{\bar A \cap \{\theta_{k} < T\}}+\sum_{i=k+1}^{+\infty}\theta_{i}\mathbbm{1}_{A_{i} \cap \{\theta_{k} = T\}}+T\mathbbm{1}_{\bar A \cap \{\theta_{k} = T\}} \\
&= \sum_{i=k+1}^{+\infty}\theta_{i}\mathbbm{1}_{A_{i} \cap \{\theta_{k} < T\}} + T\mathbbm{1}_{\bar A \cap \{\theta_{k} < T\}}+ \sum_{i=k+1}^{+\infty}T\mathbbm{1}_{A_{i} \cap \{\theta_{k} = T\}}+ T\mathbbm{1}_{\bar A \cap \{\theta_{k} = T\}}.
\end{align*}
Hence, $\tau \in \Theta_{\theta_{k^{+}}}$.

\end{Remark}
\noindent
Due to this remark, we get
\begin{equation}\label{Eq_right}
V^{+}(\theta_{k}) = \esssup_{\tau \in \Theta_{\theta_{k}}^{+}} \rho_{\theta_{k}, \tau}[\xi(\tau)] = \esssup_{\tau \in \Theta_{\theta_{k+1}}} \rho_{\theta_{k}, \tau}[\xi(\tau)].
\end{equation}

\begin{lemma}\label{TEXlemma1}
Under the assumption of ``generalized zero-one law'' (vi) on the non-linear operators, there exists a maximizing sequence for $V^{+}(\theta_{k})$.

\end{lemma}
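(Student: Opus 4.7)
The plan is to mimic the proof of Lemma \ref{Maximizing sequence} almost verbatim, after first re-indexing. By the preceding remark, we have the identification $V^{+}(\theta_{k}) = \esssup_{\tau \in \Theta_{\theta_{k+1}}} \rho_{\theta_{k},\tau}[\xi(\tau)]$, so it suffices to exhibit a maximizing sequence for the essential supremum of the family $(\rho_{\theta_{k},\tau}[\xi(\tau)])_{\tau \in \Theta_{\theta_{k+1}}}$. By the standard essential supremum result, it is then enough to show that this family is stable under pairwise maximization.

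To this end, fix $\tau,\tau' \in \Theta_{\theta_{k+1}}$ and set
\[
A \coloneqq \{\rho_{\theta_{k},\tau'}[\xi(\tau')] \leq \rho_{\theta_{k},\tau}[\xi(\tau)]\},\qquad \nu \coloneqq \tau\, \I_{A} + \tau'\, \I_{A^{c}}.
\]
Observe that $A \in \cf_{\theta_{k}} \subset \cf_{\theta_{k+1}} \subset \cf_{\tau \wedge \tau'}$, so by the closure of $\Theta$ under concatenation (recalled in Subsection \ref{defin_sect}), $\nu \in \Theta$. Moreover, $\nu \geq \theta_{k+1}$ a.s., hence $\nu \in \Theta_{\theta_{k+1}}$. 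Since $\nu = \tau$ on $A$ and $\nu = \tau'$ on $A^{c}$, two applications of the generalized zero-one law (vi) at the stopping time $S = \theta_{k}$ give
\[
\I_{A}\,\rho_{\theta_{k},\nu}[\xi(\nu)] = \I_{A}\,\rho_{\theta_{k},\tau}[\xi(\tau)],\qquad \I_{A^{c}}\,\rho_{\theta_{k},\nu}[\xi(\nu)] = \I_{A^{c}}\,\rho_{\theta_{k},\tau'}[\xi(\tau')],
\]
and adding these yields $\rho_{\theta_{k},\nu}[\xi(\nu)] = \max\big(\rho_{\theta_{k},\tau}[\xi(\tau)],\rho_{\theta_{k},\tau'}[\xi(\tau')]\big)$, which proves stability under pairwise maximization.

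The only point requiring care, and the sole difference with the proof of Lemma \ref{Maximizing sequence}, is that the concatenating set $A$ lies in $\cf_{\theta_{k}}$ while the admissible strategies are indexed by $\Theta_{\theta_{k+1}}$; this is not an obstacle because $\cf_{\theta_{k}} \subset \cf_{\theta_{k+1}}$, so the pasted stopping time $\nu$ still belongs to $\Theta_{\theta_{k+1}}$. No integrability or consistency hypothesis is needed; only the generalized zero-one law (and the admissibility of $\xi$, which is implicit in writing $\xi(\nu)$) is used.
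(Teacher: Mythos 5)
Your proof is correct and is exactly the argument the paper intends: the paper's "proof" simply says the argument is the same as for Lemma \ref{Maximizing sequence}, namely stability under pairwise maximization via the generalized zero-one law, and you carry this out with the only necessary adjustment (the pasted stopping time $\nu$ stays in $\Theta_{\theta_{k+1}}$ since $A \in \cf_{\theta_{k}} \subset \cf_{\tau\wedge\tau'}$ and $\tau, \tau' \geq \theta_{k+1}$), using the identification $V^{+}(\theta_{k}) = \esssup_{\tau \in \Theta_{\theta_{k+1}}}\rho_{\theta_{k},\tau}[\xi(\tau)]$ from the preceding remark.
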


\begin{proof}
The proof of this lemma is similar to the proof of the existence of a maximizing sequence for $V(\theta_{k})$, and is left to the readers. (We also refer to the proof of Lemma 2.3 in \cite{Grigorova-4} for similar arguments).
\end{proof}
\noindent
The following proposition establishes that the strict value $V^{+}(\theta_{k})$ at $\theta_{k}$ is equal to the non-linear evaluation from $\theta_{k}$ perspective of the value $V(\theta_{k+1})$.

\begin{proposition} \label{TEXproposition1}
Under the assumptions of monotonicity (iv), consistency (v), ``generalized zero-one law'' (vi) and monotone Fatou property with respect to the terminal condition (vii) on the non-linear operators, we have
$$V^{+}(\theta_{k}) = \rho_{\theta_{k}, \theta_{k+1}}[V(\theta_{k+1})].$$

\end{proposition}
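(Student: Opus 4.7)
The plan is to prove the equality by double inequality, using equation \eqref{Eq_right}, which rewrites $V^{+}(\theta_{k}) = \esssup_{\tau \in \Theta_{\theta_{k+1}}} \rho_{\theta_{k}, \tau}[\xi(\tau)]$, as the main starting point. The two ingredients that will do all the work are consistency (v), which lets us ``split'' the non-linear evaluation at $\theta_{k+1}$, and the maximizing sequence lemma (Lemma \ref{Maximizing sequence}) combined with the monotone Fatou property (vii), which lets us pass to the limit in the outer evaluation $\rho_{\theta_{k}, \theta_{k+1}}$.

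For the inequality $V^{+}(\theta_{k}) \leq \rho_{\theta_{k}, \theta_{k+1}}[V(\theta_{k+1})]$, I would fix $\tau \in \Theta_{\theta_{k+1}}$ and apply consistency (v) (which is valid since $\theta_{k} \leq \theta_{k+1} \leq \tau$) to write
\[
\rho_{\theta_{k}, \tau}[\xi(\tau)] = \rho_{\theta_{k}, \theta_{k+1}}\bigl[\rho_{\theta_{k+1}, \tau}[\xi(\tau)]\bigr].
\]
Since $\rho_{\theta_{k+1}, \tau}[\xi(\tau)] \leq V(\theta_{k+1})$ by definition of $V(\theta_{k+1})$, monotonicity (iv) of $\rho_{\theta_{k}, \theta_{k+1}}[\cdot]$ yields $\rho_{\theta_{k}, \tau}[\xi(\tau)] \leq \rho_{\theta_{k}, \theta_{k+1}}[V(\theta_{k+1})]$. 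Taking the essential supremum over $\tau \in \Theta_{\theta_{k+1}}$ and invoking \eqref{Eq_right} gives the desired inequality.

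For the reverse inequality $\rho_{\theta_{k}, \theta_{k+1}}[V(\theta_{k+1})] \leq V^{+}(\theta_{k})$, I would apply the maximizing sequence lemma (Lemma \ref{Maximizing sequence}) at the stopping time $\theta_{k+1}$ to obtain a non-decreasing sequence $(\tau_{p}) \in (\Theta_{\theta_{k+1}})^{\mathbb{N}}$ such that $\rho_{\theta_{k+1}, \tau_{p}}[\xi(\tau_{p})] \uparrow V(\theta_{k+1})$ a.s.\ as $p \to +\infty$. Since $V(\theta_{k+1}) \in L^{p}$ by Assumption \ref{Hyp3}, the hypotheses of the monotone Fatou property (vii) are met, and combined with monotonicity (iv) (as noted in the remark following (vii) in the text) we obtain
\[
\rho_{\theta_{k}, \theta_{k+1}}[V(\theta_{k+1})] = \lim_{p \to +\infty} \rho_{\theta_{k}, \theta_{k+1}}\bigl[\rho_{\theta_{k+1}, \tau_{p}}[\xi(\tau_{p})]\bigr].
\]
Applying consistency (v) inside the limit gives $\rho_{\theta_{k}, \theta_{k+1}}[\rho_{\theta_{k+1}, \tau_{p}}[\xi(\tau_{p})]] = \rho_{\theta_{k}, \tau_{p}}[\xi(\tau_{p})]$, and since $\tau_{p} \in \Theta_{\theta_{k+1}}$ each term is bounded above by $V^{+}(\theta_{k})$ in view of \eqref{Eq_right}. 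Passing to the limit concludes the argument.

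The main subtlety I anticipate is verifying that the maximizing sequence can indeed be chosen non-decreasing (so that (vii) applies) and that its pointwise supremum is precisely $V(\theta_{k+1})$: this is exactly the content of the maximizing sequence lemma combined with Assumption \ref{Hyp3} (the $L^{p}$-integrability of $V(\theta_{k+1})$ is what allows the monotone Fatou hypothesis $\sup_{p}\rho_{\theta_{k+1}, \tau_{p}}[\xi(\tau_{p})] \in L^{p}$ to be fulfilled). Once this is in place, the proof is essentially a clean application of consistency and continuity from below, analogous in spirit to the supermartingale part of Theorem \ref{Thm_Snell}.
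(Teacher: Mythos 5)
Your proof is correct and follows essentially the same route as the paper: consistency plus monotonicity for the inequality $V^{+}(\theta_{k}) \leq \rho_{\theta_{k}, \theta_{k+1}}[V(\theta_{k+1})]$ (the paper passes through a maximizing sequence for $V^{+}(\theta_{k})$ here, whereas you take the essential supremum over $\tau \in \Theta_{\theta_{k+1}}$ directly, a harmless simplification), and the maximizing sequence for $V(\theta_{k+1})$ combined with the monotone Fatou property (vii) and consistency (v) for the converse, exactly as in the paper. One cosmetic point: Lemma \ref{Maximizing sequence} yields a sequence $(\tau_{p})$ for which the evaluations $\rho_{\theta_{k+1}, \tau_{p}}[\xi(\tau_{p})]$ are non-decreasing, not necessarily the stopping times themselves --- but that is all that property (vii) requires, so nothing breaks.
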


\begin{proof}
First we show that $V^{+}(\theta_{k}) \leq \rho_{\theta_{k}, \theta_{k+1}}[V(\theta_{k+1})]$.\\
[0.2cm]
By Lemma \ref{TEXlemma1}, there exists a maximizing sequence $(\tau_{m}) \in (\Theta_{\theta_{k+1}})^{\mathbbm{N}}$ such that 
$$V^{+}(\theta_{k}) = \lim_{m \to +\infty} \uparrow \rho_{\theta_{k}, \tau_{m}}[\xi(\tau_{m})].$$
Now, by using the consistency property of the non-linear evaluations, we get
\begin{equation}\label{TEXequation111}
V^{+}(\theta_{k}) = \lim_{m \to +\infty} \uparrow \rho_{\theta_{k}, \tau_{m}}[\xi(\tau_{m})] = \lim_{m \to +\infty} \uparrow \rho_{\theta_{k}, \theta_{k+1}}[\rho_{\theta_{k+1}, \tau_{m}}[\xi(\tau_{m})]].
\end{equation}
For each $m \in \mathbbm{N}$, we have  
$$\rho_{\theta_{k+1}, \tau_{m}}[\xi(\tau_{m})] \leq \esssup_{\tau \in \Theta_{\theta_{k+1}}}\rho_{\theta_{k+1}, \tau}[\xi(\tau)] = V(\theta_{k+1}).$$
Then, by the monotonicity property of $\rho_{\theta_{k}, \theta_{k+1}}[\cdot]$, we get
$$\rho_{\theta_{k}, \theta_{k+1}}[\rho_{\theta_{k+1}, \tau_{m}}[\xi(\tau_{m})]] \leq \rho_{\theta_{k}, \theta_{k+1}}[V(\theta_{k+1})].$$
Hence, we have 
$$\lim_{m \to +\infty} \uparrow \rho_{\theta_{k}, \theta_{k+1}}[\rho_{\theta_{k+1}, \tau_{m}}[\xi(\tau_{m})]] \leq \lim_{m \to +\infty} \uparrow \rho_{\theta_{k}, \theta_{k+1}}[V(\theta_{k+1})] = \rho_{\theta_{k}, \theta_{k+1}}[V(\theta_{k+1})].$$
We conclude, combining this with  \eqref{TEXequation111}, that 
$$V^{+}(\theta_{k}) \leq \rho_{\theta_{k}, \theta_{k+1}}[V(\theta_{k+1})].$$
Now, let us show the converse inequality. By Lemma \ref{Maximizing sequence}, there also exists a maximizing sequence $(\tau_{m}') \in (\Theta_{\theta_{k+1}})^{\mathbbm{N}}$ such that 
$$V(\theta_{k+1}) = \lim_{m \to +\infty} \uparrow \rho_{\theta_{k+1}, \tau_{m}'}[\xi(\tau_{m}')].$$
Hence, 
$$\rho_{\theta_{k}, \theta_{k+1}}[V(\theta_{k+1})] = \rho_{\theta_{k}, \theta_{k+1}}[\lim_{m \to +\infty} \uparrow \rho_{\theta_{k+1}, \tau_{m}'}[\xi(\tau_{m}')]].$$
We first use the monotone Fatou property with respect to the terminal condition of the non-linear operator $\rho_{\theta_{k}, \theta_{k+1}}[\cdot]$; then, we apply the consistency property of the non-linear operators to get:
\begin{align*}
\rho_{\theta_{k}, \theta_{k+1}}[V(\theta_{k+1})] &\leq \liminf_{m \to +\infty}\rho_{\theta_{k}, \theta_{k+1}}[\rho_{\theta_{k+1}, \tau_{m}'}[\xi(\tau_{m}')]]\\
&= \liminf_{m \to +\infty}\rho_{\theta_{k}, \tau_{m}'}[\xi(\tau_{m}')]\\
&\leq \esssup_{\tau \in \Theta_{\theta_{k+1}}} \rho_{\theta_{k}, \tau}[\xi(\tau)] = V^{+}(\theta_{k}),
\end{align*}
where we have used Eq. \eqref{Eq_right} to obtain the last equality. 
Hence, 
$$\rho_{\theta_{k}, \theta_{k+1}}[V(\theta_{k+1})] = V^{+}(\theta_{k}).$$
The proof is complete.
\end{proof}

\begin{proposition} \label{TEXproposition2}
Under the assumptions (iii) and ``generalized zero-one law'' (vi) on the non-linear operators, we have
$$V(\theta_{k}) = \xi(\theta_{k}) \vee V^{+}(\theta_{k}).$$

\end{proposition}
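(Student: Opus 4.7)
The plan is to prove the two inequalities separately. The inequality $V(\theta_k)\geq \xi(\theta_k)\vee V^{+}(\theta_k)$ is the easier half: since $\Theta_{\theta_k}^{+}\subset \Theta_{\theta_k}$ we immediately get $V(\theta_k)\geq V^{+}(\theta_k)$, and applying knowledge preservation (iii) to the admissible stopping time $\theta_k\in\Theta_{\theta_k}$ yields $\rho_{\theta_k,\theta_k}[\xi(\theta_k)]=\xi(\theta_k)$, so $V(\theta_k)\geq \xi(\theta_k)$ as well.

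For the reverse inequality, the idea is to take an arbitrary $\tau\in\Theta_{\theta_k}$ and split it along the event $A\coloneqq\{\tau=\theta_k\}\in \mathcal{F}_{\theta_k}$. On $A$, the generalized zero-one law (vi) applied with $S=\theta_k$ together with knowledge preservation (iii) gives
\[
\mathbbm{1}_A\,\rho_{\theta_k,\tau}[\xi(\tau)]
=\mathbbm{1}_A\,\rho_{\theta_k,\theta_k}[\xi(\theta_k)]
=\mathbbm{1}_A\,\xi(\theta_k).
\]
On $A^{c}$, observe first that $\{\theta_k=T\}\subset A$ (since $\tau\leq T$ forces $\tau=\theta_k$ there), so on $A^{c}$ we have both $\theta_k<T$ and $\tau>\theta_k$. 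I would then introduce the modified stopping time $\tilde\tau\coloneqq \tau\,\mathbbm{1}_{A^c}+T\,\mathbbm{1}_A$; the (routine) verification that $\tilde\tau\in\Theta_{\theta_k}^{+}$ follows from the fact that $\Theta$ is closed under concatenation along events in $\mathcal{F}_{\theta_k}$. Since $\tau=\tilde\tau$ on $A^{c}$, applying (vi) once more gives
\[
\mathbbm{1}_{A^c}\,\rho_{\theta_k,\tau}[\xi(\tau)]
=\mathbbm{1}_{A^c}\,\rho_{\theta_k,\tilde\tau}[\xi(\tilde\tau)]
\leq \mathbbm{1}_{A^c}\,V^{+}(\theta_k),
\]
by definition of the essential supremum defining $V^{+}(\theta_k)$.

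Summing the two contributions,
\[
\rho_{\theta_k,\tau}[\xi(\tau)]
=\mathbbm{1}_A\,\xi(\theta_k)+\mathbbm{1}_{A^c}\,\rho_{\theta_k,\tilde\tau}[\xi(\tilde\tau)]
\leq \mathbbm{1}_A\,\xi(\theta_k)+\mathbbm{1}_{A^c}\,V^{+}(\theta_k)
\leq \xi(\theta_k)\vee V^{+}(\theta_k).
\]
Taking the essential supremum over $\tau\in\Theta_{\theta_k}$ then yields $V(\theta_k)\leq \xi(\theta_k)\vee V^{+}(\theta_k)$, completing the proof.

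The only subtle step is the construction of $\tilde\tau$ and checking that it lands in $\Theta_{\theta_k}^{+}$; in particular one must handle the event $\{\theta_k=T\}$ carefully to see it is contained in $A$, so that the definitional constraint ``$\tilde\tau=T$ on $\{\theta_k=T\}$'' is automatic. Everything else is a direct application of the two assumptions (iii) and (vi), together with the definitions of $V(\theta_k)$ and $V^{+}(\theta_k)$.
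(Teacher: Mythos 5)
Your proof is correct and follows essentially the same route as the paper: your $\tilde\tau$ coincides with the paper's $\bar\tau=\tau\,\mathbbm{1}_{\{\tau>\theta_k\}}+T\,\mathbbm{1}_{\{\tau\leq\theta_k\}}$ (since $\tau\geq\theta_k$, your $A=\{\tau=\theta_k\}$ is exactly $\{\tau\leq\theta_k\}$), and the decomposition along $\{\tau=\theta_k\}$ versus $\{\tau>\theta_k\}$, the use of (vi) and (iii) on each piece, and the final essential supremum are identical to the paper's argument. Your additional observation that $\{\theta_k=T\}\subset A$, which makes the membership $\tilde\tau\in\Theta_{\theta_k}^{+}$ transparent, is a nice explicit touch the paper leaves implicit.
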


\begin{proof}
By Remark \ref{TEXRemark33}, first statement,  which can be applied as $\rho$ satisfies property (iii), we have $V(\theta_{k}) \geq \xi(\theta_{k})$. On the other hand, since $\Theta_{\theta_{k+1}} \subset \Theta_{\theta_{k}}$, we have $V(\theta_{k}) \geq V^{+}(\theta_{k})$. By combining these two inequalities, we get $V(\theta_{k}) \geq \xi(\theta_{k}) \vee V^{+}(\theta_{k})$. It remains to show the converse inequality. Let $\tau \in \Theta_{\theta_{k}}$. We define $\bar{\tau} = \tau \mathbbm{1}_{\{\tau > \theta_{k}\}} + T \mathbbm{1}_{\{\tau \leq \theta_{k}\}}$. As $\bar{\tau} \in \Theta_{\theta_{k}}^{+}$, we have 
$$V^{+}(\theta_{k}) = \esssup_{\tau \in \Theta_{\theta_{k}}^{+}} \rho_{\theta_{k}, \tau}[\xi(\tau)] \geq \rho_{\theta_{k}, \bar{\tau}}[\xi(\bar{\tau})].$$
Hence, we have
\begin{equation} \label{TEXequation222}
\mathbbm{1}_{\{\tau > \theta_{k}\}} \rho_{\theta_{k}, \bar{\tau}}[\xi(\bar{\tau})] \leq \mathbbm{1}_{\{\tau > \theta_{k}\}} V^{+}(\theta_{k}).
\end{equation}
Moreover, on the set $\{\tau > \theta_{k}\}$, we have $\bar{\tau} = \tau$, so 
 the ``generalized zero-one law'' gives
\begin{equation} \label{TEXequation333}
\mathbbm{1}_{\{\tau > \theta_{k}\}} \rho_{\theta_{k}, \bar{\tau}}[\xi(\bar{\tau})] = \mathbbm{1}_{\{\tau > \theta_{k}\}} \rho_{\theta_{k}, \tau}[\xi(\tau)].
\end{equation}
By combining \eqref{TEXequation222} and \eqref{TEXequation333}, we get
\begin{equation} \label{TEXequation444}
\mathbbm{1}_{\{\tau > \theta_{k}\}} \rho_{\theta_{k}, \tau}[\xi(\tau)] \leq \mathbbm{1}_{\{\tau > \theta_{k}\}} V^{+}(\theta_{k}).
\end{equation}
On the other hand, as $\tau \in \Theta_{\theta_{k}}$, we have
$$\rho_{\theta_{k}, \tau}[\xi(\tau)] = \mathbbm{1}_{\{\tau = \theta_{k}\}} \rho_{\theta_{k}, \tau}[\xi(\tau)] + \mathbbm{1}_{\{\tau > \theta_{k}\}} \rho_{\theta_{k}, \tau}[\xi(\tau)].$$
By using the ``generalized zero-one law'' and property (iii) of the non-linear operator $\rho_{\theta_{k}, \tau}[\cdot]$, we get
\begin{equation} \label{TEXequation555}
\mathbbm{1}_{\{\tau = \theta_{k}\}} \rho_{\theta_{k}, \tau}[\xi(\tau)] = \mathbbm{1}_{\{\tau = \theta_{k}\}} \rho_{\theta_{k}, \theta_{k}}[\xi(\theta_{k})] = \mathbbm{1}_{\{\tau = \theta_{k}\}} \xi(\theta_{k}).
\end{equation}
From Eqs. \eqref{TEXequation444} and \eqref{TEXequation555}, we get
\begin{align*}
\rho_{\theta_{k}, \tau}[\xi(\tau)] &= \mathbbm{1}_{\{\tau = \theta_{k}\}} \xi(\theta_{k}) + \mathbbm{1}_{\{\tau > \theta_{k}\}} \rho_{\theta_{k}, \tau}[\xi(\tau)]\\
&\leq \mathbbm{1}_{\{\tau = \theta_{k}\}} \xi(\theta_{k}) + \mathbbm{1}_{\{\tau > \theta_{k}\}} V^{+}(\theta_{k}) = \xi(\theta_{k}) \vee V^{+}(\theta_{k}). 
\end{align*}
Now, by taking the essential supremum over $\tau \in \Theta_{\theta_{k}}$, we get $V(\theta_{k}) \leq \xi(\theta_{k}) \vee V^{+}(\theta_{k})$. Hence, the proof is complete.
\end{proof}
\noindent
We refer to \cite{Quenez-Kobylanski} for a similar approach the one in the above proposition in the linear case.\\

By combining Proposition \ref{TEXproposition1} and Proposition \ref{TEXproposition2}, we get:

\begin{theorem}[DPP]\label{TEXtheorem1}
Under the assumptions of knowledge preservation (iii), monotonicity (iv), consistency (v), ``generalized zero-one law'' (vi) and monotone Fatou property with respect to the terminal condition (vii) on the non-linear operators, the value family $V$ satisfies the DPP: 
$$\text{for each } k \in \mathbbm{N}, \; V(\theta_{k}) = \xi(\theta_{k}) \vee \rho_{\theta_{k}, \theta_{k+1}}[V(\theta_{k+1})], \text{ and } V(T)=\xi(T).$$

\end{theorem}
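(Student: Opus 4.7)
The plan is to assemble Theorem \ref{TEXtheorem1} directly from the two propositions already proved, namely Proposition \ref{TEXproposition1} and Proposition \ref{TEXproposition2}. First I would check that the union of hypotheses needed by those propositions is exactly the list stated in the theorem: Proposition \ref{TEXproposition1} uses (iv), (v), (vi), (vii), while Proposition \ref{TEXproposition2} uses (iii) and (vi); together they cover (iii), (iv), (v), (vi), (vii), which matches the assumptions of Theorem \ref{TEXtheorem1}.

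Next, for each $k \in \mathbb{N}$, I would apply Proposition \ref{TEXproposition2} to write
\[
V(\theta_k) = \xi(\theta_k) \vee V^{+}(\theta_k),
\]
and then substitute the identity $V^{+}(\theta_k) = \rho_{\theta_k,\theta_{k+1}}[V(\theta_{k+1})]$ provided by Proposition \ref{TEXproposition1}. This yields
\[
V(\theta_k) = \xi(\theta_k) \vee \rho_{\theta_k,\theta_{k+1}}[V(\theta_{k+1})],
\]
which is the first half of the DPP.

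For the terminal identity $V(T) = \xi(T)$, I would use the fact, recorded just after the definition of $\Theta$, that all stopping times in $\Theta$ are bounded from above by $T$. Consequently any $\tau \in \Theta_T$ satisfies $\tau = T$ a.s., so the essential supremum in the definition of $V(T)$ reduces to a single term, and the knowledge preservation property (iii) gives
\[
V(T) = \rho_{T,T}[\xi(T)] = \xi(T),
\]
which completes the argument. There is essentially no obstacle beyond bookkeeping here, since Propositions \ref{TEXproposition1} and \ref{TEXproposition2} do all the real work; the only point to double-check is that the hypotheses requested in the theorem statement indeed suffice to invoke both propositions simultaneously, which we have verified above.
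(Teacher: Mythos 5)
Your proposal is correct and is exactly the paper's argument: the theorem is obtained by combining Proposition \ref{TEXproposition2} with Proposition \ref{TEXproposition1}, and the terminal identity $V(T)=\xi(T)$ follows from knowledge preservation (iii) together with the fact that every $\tau\in\Theta_T$ equals $T$, just as noted in the paper after the definition of $V$. Your hypothesis bookkeeping also matches the paper's.
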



\subsection{Optimal stopping times} \label{SUBset5}

For each $k$, let us define the random variable $\nu_k$ 
\begin{equation}\label{nu_zerobis} \nu_k:=
{\rm ess} \inf \;{\cal A}_{k} \quad \mbox{where} \quad 
{\cal A}_{k}:=\{\,\tau \in \Theta_{\theta_k}\,: \,V({\tau}) = \xi({\tau}) \,\,\mbox {a.s.} \,\}.
\end{equation}
As $T<\infty$, under property (iii) on $\rho$,  the set ${\cal A}_{k}$ is clearly non-empty (as $V(T)=\xi(T)$ in this case). Moreover,  it is clearly stable by pairwise minimization. 
Hence, by classical properties of the essential infimum, there exists a non increasing  sequence $(\tau_n)$ in ${\cal A}_{k}$ such that  
$\lim_{n \rightarrow + \infty} \tau_n= \nu_k$ a.s. In particular, $\nu_k$ is a 
stopping time and $T \geq \nu_k \geq \theta_k$ a.s., and $\nu_k\in\Theta_{\theta_k}$
(by stability of   $\Theta_{\theta_k}$ when passing to a monotone limit).\\
[0.3cm] 
In the following theorem, we show that, under suitable assumptions,   the stopping time $\nu_k$ defined in \eqref{nu_zerobis}  is optimal for the optimization problem \eqref{optimal_stopping_problem_kbis} at time $\nu= \theta_k$.\\
We introduce the following assumption on the value family $V$. 
\begin{Assumption}\label{HypA}
We assume that the value family  $V$ is left-upper-semicontinuous (LUSC) along the sequence $(\theta_n\wedge \nu_k)_{n\in\N}$, that is,
\begin{equation} \label{TEXEQuatioNN266}
\limsup_{n \to +\infty} V(\theta_{n} \wedge \nu_{k}) \leq V(\nu_{k}).
\end{equation}
\end{Assumption}
\begin{Remark} 
Assumption \ref{HypA} is trivially satisfied in the following particular case on $\Theta$:
Besides  the assumptions $(a)$ and $(b)$ on $\Theta$, the additional assumption   (c) is imposed, namely:\\
(c)For almost all  $\omega$, there exits $n_0= n_0(\omega)$ (depending on $\omega$) such that $\theta_n(\omega)=T$, for all $n\geq n_0$. In other words, for almost all $\omega$, there exists at most a finite number of time points $\theta_n(\omega)$ such that $\theta_n(\omega)<T$.\\
In this case, for all $n$ 
after a certain rank $\bar{n} = \bar{n}(\omega)$, we have $(\theta_{n}\wedge\nu_{k})(\omega) = \nu_{k}(\omega)$. Hence, as $V$ is admissible, we have, by Remark \ref{Rmk_on_admissibility}, for all $n \geq \bar{n}(\omega)$, $V(\theta_{n} \wedge \nu_{k})(\omega) = V(\nu_{k})(\omega)$. Hence, Assumption \ref{HypA} holds true.
\end{Remark}
We will see later on a further discussion on  Assumption \ref{HypA} in the case of the general $\Theta$, and conditions (on $\rho$ and on the pay-off family $\xi$) under which this assumption  is satisfied.   
\begin{theorem}[Optimality of $\nu_k$]\label{prop_martingale_optimality_principle_general} 
Let $k \in \mathbbm{N}$ and let $\nu_k$ be the stopping time defined by \eqref{nu_zerobis}. Let Assumption \ref{HypA} on $V$ be satisfied.  Let $\rho$ satisfy  the properties  of admissibility (ii), knowledge preservation (iii), monotonicity (iv), consistency (v), ``generalized zero-one law'' (vi), and monotone Fatou property with respect to the terminal condition (vii).
We assume additionally that $\rho$ satisfies the following property: 
\begin{itemize}
\item (left-upper-semicontinuity (LUSC) along Bermudan stopping times with respect to the terminal condition and the terminal time at $\nu_{k}$), that is,  
\begin{equation}\label{cond_on_rho}
\limsup_{n \to +\infty}\rho_{S, \tau_{n}}[\phi(\tau_{n})] \leq \rho_{S, \nu_{k}}[\limsup_{n \to +\infty} \phi(\tau_{n})],
\end{equation}
$ \text{ for each non-decreasing sequence }(\tau_{n}) \in \Theta_{S}^{\N} \text{ such that }\lim_{n \to +\infty} \uparrow \tau_{n} = \nu_{k} \text{ a.s.},$ and for each $p$-integrable admissible family $\phi$ such that $\sup_{n\in\N} |\phi(\tau_n)|\in L^p.$
\end{itemize} 
Then:
\begin{equation}\label{eq_optimal_generalbis}
V(\theta_k)=\rho_{\theta_k,\nu_k}[\xi(\nu_k)]=\esssup_{\nu\in \Theta_{\theta_k}} \rho_{\theta_k,\nu}[\xi(\nu)] \quad a.s.\,
\end{equation}

\end{theorem}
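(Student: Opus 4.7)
The plan is to establish the \emph{``$(\Theta,\rho)$-martingale property of $V$ on the stochastic interval from $\theta_k$ to $\nu_k$''}, that is,
\[V(\theta_k)=\rho_{\theta_k,\nu_k}[V(\nu_k)].\]
Once this identity is proved, since $V(\nu_k)=\xi(\nu_k)$ a.s.\ by the very definition of $\nu_k$ as an element of $\mathcal{A}_k$, it immediately yields $V(\theta_k)=\rho_{\theta_k,\nu_k}[\xi(\nu_k)]$. The converse inequality $V(\theta_k)\geq\rho_{\theta_k,\nu_k}[\xi(\nu_k)]$ is immediate from $\nu_k\in\Theta_{\theta_k}$ and the definition \eqref{optimal_stopping_problem_kbis} of $V(\theta_k)$ as an essential supremum, so both equalities in \eqref{eq_optimal_generalbis} follow.

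To prove the martingale identity, I would first show by induction on $n\in\N$ that
\[V(\theta_k)=\rho_{\theta_k,\theta_{k+n}\wedge\nu_k}[V(\theta_{k+n}\wedge\nu_k)].\]
The base $n=0$ is trivial by knowledge preservation (iii). For the inductive step, consistency (v) reduces matters to the ``one-step'' identity $V(\theta_{k+n}\wedge\nu_k)=\rho_{\theta_{k+n}\wedge\nu_k,\theta_{k+n+1}\wedge\nu_k}[V(\theta_{k+n+1}\wedge\nu_k)]$. Splitting on the two sides of $\nu_k$: on $\{\nu_k\leq\theta_{k+n}\}$ both sides collapse to $V(\nu_k)$ (using admissibility of $V$ and (iii)); on $\{\nu_k>\theta_{k+n}\}$, since $\nu_k\in\Theta_{\theta_k}$ takes its values in $\{\theta_j\}_{j\geq 0}\cup\{T\}$, one necessarily has $\nu_k\geq\theta_{k+n+1}$, so both wedges disappear and the identity reduces (after localizing via the ``generalized zero-one law'' (vi)) to the DPP relation $V(\theta_{k+n})=\rho_{\theta_{k+n},\theta_{k+n+1}}[V(\theta_{k+n+1})]$ of Theorem \ref{TEXtheorem1}; by that DPP, this in turn is equivalent to the \emph{strict domination} $V(\theta_{k+n})>\xi(\theta_{k+n})$ on $\{\nu_k>\theta_{k+n}\}$. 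I would prove this last fact by contradiction: letting $D:=\{V(\theta_{k+n})=\xi(\theta_{k+n})\}\in\cf_{\theta_{k+n}}$ and defining the competitor $\tau':=\theta_{k+n}\mathbbm{1}_D+T\mathbbm{1}_{D^c}\in\Theta_{\theta_k}$, admissibility of $V$ and of $\xi$, together with $V(T)=\xi(T)$, force $\tau'\in\mathcal{A}_k$; the defining property of $\nu_k$ then gives $\tau'\geq\nu_k$ a.s., so $D\cap\{\nu_k>\theta_{k+n}\}$ is negligible.

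The final step is the limit passage $n\to\infty$. Since $\theta_n\uparrow T\geq\nu_k$ a.s., the sequence $(\theta_{k+n}\wedge\nu_k)_n$ is non-decreasing in $\Theta_{\theta_k}$ and converges a.s.\ to $\nu_k$. Applying the LUSC hypothesis \eqref{cond_on_rho} on $\rho$ (with $S=\theta_k$ and $\phi=V$), then Assumption \ref{HypA}, and finally monotonicity (iv), one obtains
\[V(\theta_k)=\limsup_{n\to\infty}\rho_{\theta_k,\theta_{k+n}\wedge\nu_k}[V(\theta_{k+n}\wedge\nu_k)]\leq \rho_{\theta_k,\nu_k}[\limsup_{n\to\infty}V(\theta_{k+n}\wedge\nu_k)]\leq \rho_{\theta_k,\nu_k}[V(\nu_k)],\]
which delivers $V(\theta_k)\leq\rho_{\theta_k,\nu_k}[\xi(\nu_k)]$ and completes the argument. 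I expect the main obstacle to lie precisely in this limit passage: one must verify the integrability $\sup_n|V(\theta_{k+n}\wedge\nu_k)|\in L^p$ required to invoke \eqref{cond_on_rho} (in practice provided by a $p$-integrable dominating $(\Theta,\rho)$-martingale as in Remark \ref{TEXRemaRk24}), and then chain Assumption \ref{HypA} with the LUSC of $\rho$ and monotonicity without losing any estimate along the way. The preliminary ``strict domination'' lemma on $\{\nu_k>\theta_{k+n}\}$ is the other delicate point, relying on a careful use of admissibility and of the ``generalized zero-one law''.
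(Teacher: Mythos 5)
Your argument is, in substance, the paper's own route, streamlined: the induction giving $V(\theta_k)=\rho_{\theta_k,\theta_{k+n}\wedge\nu_k}[V(\theta_{k+n}\wedge\nu_k)]$ is exactly the content of the DPP (Theorem \ref{TEXtheorem1}) combined with Lemmas \ref{TEXlemma38} and \ref{TEXlemma3939} (your competitor $\tau'=\theta_{k+n}\mathbbm{1}_D+T\mathbbm{1}_{D^c}$ is a correct, and in fact more explicit, proof of the strict domination that the paper invokes in Lemma \ref{TEXlemma38}); the limit passage via \eqref{cond_on_rho}, Assumption \ref{HypA} and monotonicity coincides with Steps 1--2 of the paper's proof. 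Working only at $\sigma=\theta_k$ instead of proving the full $(\Theta,\rho)$-martingale property of $(V(\nu\wedge\nu_k))$ and passing through the optimality criterion (Lemma \ref{optimality_criterion}) is legitimate, since only $V(\theta_k)$ appears in \eqref{eq_optimal_generalbis}. Your worry about $\sup_n|V(\theta_{k+n}\wedge\nu_k)|\in L^p$ is fair but not specific to your variant: the paper applies \eqref{cond_on_rho} with $\phi=V$ as well.

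There is, however, one genuine gap: the assertion that $V(\nu_k)=\xi(\nu_k)$ holds ``by the very definition of $\nu_k$ as an element of $\mathcal{A}_k$''. By definition \eqref{nu_zerobis}, $\nu_k$ is the \emph{essential infimum} of $\mathcal{A}_k$, not an element of it, and an essential infimum of a family of stopping times need not be attained. Establishing $V(\nu_k)=\xi(\nu_k)$ is precisely condition (i) of the optimality criterion and is the final step of the paper's proof: since $\mathcal{A}_k$ is stable under pairwise minimization, one can choose a non-increasing sequence $(\tau_n)\subset\mathcal{A}_k$ with $\tau_n\downarrow\nu_k$, and then pass to the limit in $V(\tau_n)=\xi(\tau_n)$ using the fact (Remark \ref{TEXremark1}) that any admissible family is right-continuous along non-increasing sequences of Bermudan stopping times, which hinges on the specific structure of $\Theta$ (the sequence $(\tau_n(\omega))$ is eventually constant). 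Without this argument, your final inequality $\rho_{\theta_k,\nu_k}[V(\nu_k)]\leq\rho_{\theta_k,\nu_k}[\xi(\nu_k)]$, and hence the first equality in \eqref{eq_optimal_generalbis}, is unjustified. The gap is fixable within the framework, but the step must be supplied rather than attributed to the definition of $\nu_k$.
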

\noindent
Note that in the case where $\rho=(\rho_S[\cdot])_{S\in\Theta}$ does not depend on the second time index, the above additional property reduces to the LUSC of $\rho_S[\cdot]$ (with respect to the terminal condition) along Bermudan stopping sequences. 

\subsubsection{The $(\Theta,\rho)$-martingale property on a stochastic interval}
Before proving the theorem, we give several useful technical lemmas.\\
[0.2cm]
The first two clarify the $(\Theta,\rho)$-martingale structure on a stochastic interval in a more "handy" way. 
The third one deals with an "\textit{if}-condition" (optimality criterion) and an "\textit{only if}-condition" for optimality.  

\begin{lemma}\label{Lemma2525}
Let $\rho$  satisfy the consistency property (v). Let $\phi = (\phi(\nu))$ be a given square-integrable admissible family. Let $S \in \Theta$ and $\tau \in \Theta$ be such that $S \leq \tau$ a.s.  We assume that for any $\sigma \in \Theta$ such that $S \leq \sigma \leq \tau$ a.s., it holds
\begin{equation} \label{third}
\rho_{\sigma, \tau}[\phi(\tau)] = \phi(\sigma) \quad a.s. \,
\end{equation}
Then, $\phi$ is a $(\Theta, \rho)$-martingale on the stochastic interval $[S, \tau]$, that is, for any $\nu_{1} \in \Theta$, $\nu_{2} \in \Theta$, such that $S \leq \nu_{1} \leq \nu_{2} \leq \tau$ a.s., 
\begin{equation}
\rho_{\nu_{1}, \nu_{2}}[\phi(\nu_{2})] = \phi(\nu_{1}) \quad a.s. \,
\end{equation}

\end{lemma}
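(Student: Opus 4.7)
The plan is to apply the consistency property (v) once, using the two stopping times $\nu_1$ and $\nu_2$ as intermediate instants between $S$ and $\tau$, and then to invoke the hypothesis \eqref{third} twice (once with $\sigma=\nu_1$ and once with $\sigma=\nu_2$).

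More precisely, I would first observe that since $S\le\nu_1\le\nu_2\le\tau$ a.s., both $\nu_1$ and $\nu_2$ lie in the stochastic interval $[S,\tau]$, so the hypothesis \eqref{third} can be applied to each of them. This gives
\[
\rho_{\nu_1,\tau}[\phi(\tau)] \;=\; \phi(\nu_1)\quad \text{a.s.}\qquad\text{and}\qquad \rho_{\nu_2,\tau}[\phi(\tau)] \;=\; \phi(\nu_2)\quad \text{a.s.}
\]
Next, consistency (v) applied with the triple $\nu_1\le\nu_2\le\tau$ yields
\[
\rho_{\nu_1,\tau}[\phi(\tau)] \;=\; \rho_{\nu_1,\nu_2}\!\bigl[\rho_{\nu_2,\tau}[\phi(\tau)]\bigr] \;=\; \rho_{\nu_1,\nu_2}[\phi(\nu_2)],
\]
where the second equality uses the identity at $\nu_2$ obtained above. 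Comparing with the identity at $\nu_1$ then gives $\rho_{\nu_1,\nu_2}[\phi(\nu_2)]=\phi(\nu_1)$ a.s., which is exactly the $(\Theta,\rho)$-martingale property on $[S,\tau]$.

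There is essentially no obstacle here: the lemma is a clean consequence of consistency, and the only ingredient besides (v) is the admissibility of $\phi$ (to ensure that the various $\phi(\cdot)$ live in the correct $L^p$ spaces so that the non-linear operators can be applied). I would just make sure to note that the hypothesis \eqref{third} is non-trivially a hypothesis about all $\sigma\in[S,\tau]$, and in particular holds for $\sigma=\nu_1$ and $\sigma=\nu_2$ separately; the lemma is thus really saying that this ``flat'' family of identities with $\tau$ as a fixed right endpoint automatically upgrades to the full martingale identities between any pair $\nu_1\le\nu_2$ in $[S,\tau]$, with consistency doing all of the work.
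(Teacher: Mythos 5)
Your proof is correct and follows essentially the same route as the paper: apply the hypothesis \eqref{third} at $\sigma=\nu_1$ and $\sigma=\nu_2$, and combine these with one application of consistency (v) along $\nu_1\leq\nu_2\leq\tau$ to obtain $\phi(\nu_1)=\rho_{\nu_1,\nu_2}[\phi(\nu_2)]$. No gaps.
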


\begin{proof}
Let $\nu_{1} \in \Theta$, and $\nu_{2} \in \Theta$ be such that $S \leq \nu_{1} \leq \nu_{2} \leq \tau$ a.s.\\
[0.2cm]
Hence, by applying the equation \eqref{third} with $\sigma = \nu_{1}$ and by the consistency of the non-linear operators $\rho$ , we have
$$\phi(\nu_{1}) = \rho_{\nu_{1}, \tau}[\phi(\tau)] = \rho_{\nu_{1}, \nu_{2}}[\rho_{\nu_{2}, \tau}[\phi(\tau)]] \; a.s.$$
Then, by applying again the equation \eqref{third} with $\sigma = \nu_{2}$, we have
$$\rho_{\nu_{2}, \tau}[\phi(\tau)] = \phi(\nu_{2}).$$
Hence,
$\phi(\nu_{1}) = \rho_{\nu_{1}, \nu_{2}}[\rho_{\nu_{2}, \tau}[\phi(\tau)]] =  \rho_{\nu_{1}, \nu_{2}}[\phi(\nu_{2})] \; a.s.$
\end{proof}

\begin{definition}(Strictly monotone operator) 
Let $S \in \Theta, \tau \in \Theta_{S}$. We say that $\rho_{S, \tau}$ is strictly monotone if the following two conditions hold:\\
[0.2cm]
1. $\rho_{S, \tau}$ is monotone.\\
[0.2cm]
2. If $\eta_{1} \leq \eta_{2}$ and $\rho_{S, \tau}(\eta_{1}) = \rho_{S, \tau}(\eta_{2})$, then $\eta_{1} = \eta_{2}$.

\end{definition}

\begin{lemma}\label{Lemma2626}
We assume that the non-linear operators satisfy the properties of monotonicity (iv) and consistency (v). Assume  moreover that the non-linear operators $\rho$ are strictly monotone. 
Let $\phi = (\phi(\nu))$ be a given p-integrable admissible family). Let $S \in \Theta$ and $\tau \in \Theta$ be such that $S \leq \tau$ a.s. We assume that the two conditions hold:\\
[0.2cm]
1. $\phi$ is a $(\Theta, \rho)$-supermartingale family on $[S, \tau]$;\\
[0.2cm]
2. $\phi(S) = \rho_{S, \tau}[\phi(\tau)]$ a.s.\\
[0.2cm]
Then, for any $\sigma \in \Theta$ such that $S \leq \sigma \leq \tau$ a.s., $\rho_{\sigma, \tau}[\phi(\tau)] = \phi(\sigma)$ a.s.

\end{lemma}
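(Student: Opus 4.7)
The plan is to combine the supermartingale inequality on the two pieces $[S,\sigma]$ and $[\sigma,\tau]$, use consistency to glue them back to $[S,\tau]$, and then invoke strict monotonicity to promote the obtained equality of $\rho_{S,\sigma}[\cdot]$-images into an equality of arguments.

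More concretely, I would fix $\sigma \in \Theta$ with $S \leq \sigma \leq \tau$ a.s. and write down the two supermartingale inequalities given by assumption~1: first, $\phi(S) \geq \rho_{S,\sigma}[\phi(\sigma)]$ a.s., and second, $\phi(\sigma) \geq \rho_{\sigma,\tau}[\phi(\tau)]$ a.s. Applying the monotonicity of $\rho_{S,\sigma}[\cdot]$ (property (iv)) to the second inequality and then invoking consistency (property (v)), I would obtain
\begin{equation*}
\rho_{S,\sigma}[\phi(\sigma)] \geq \rho_{S,\sigma}\bigl[\rho_{\sigma,\tau}[\phi(\tau)]\bigr] = \rho_{S,\tau}[\phi(\tau)] = \phi(S) \quad \text{a.s.},
\end{equation*}
where the last equality is assumption~2. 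Chaining this with the first supermartingale inequality gives $\phi(S) \geq \rho_{S,\sigma}[\phi(\sigma)] \geq \phi(S)$, so both are equalities. In particular, $\rho_{S,\sigma}[\phi(\sigma)] = \rho_{S,\sigma}\bigl[\rho_{\sigma,\tau}[\phi(\tau)]\bigr]$ a.s.

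At this point the key step is to apply strict monotonicity of $\rho_{S,\sigma}[\cdot]$: I already know $\phi(\sigma) \geq \rho_{\sigma,\tau}[\phi(\tau)]$ a.s.\ from the supermartingale property, and their $\rho_{S,\sigma}[\cdot]$-images coincide, so the strict monotonicity assumption forces $\phi(\sigma) = \rho_{\sigma,\tau}[\phi(\tau)]$ a.s., which is exactly the desired conclusion. I do not expect a genuine obstacle here; the only subtlety is to make sure that the comparison needed for strict monotonicity (namely $\phi(\sigma) \geq \rho_{\sigma,\tau}[\phi(\tau)]$) is already in hand before invoking it, and that $\phi(\sigma)$ and $\rho_{\sigma,\tau}[\phi(\tau)]$ both lie in $L^p(\cf_\sigma)$ so that $\rho_{S,\sigma}[\cdot]$ can be applied to each argument separately. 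Both points are immediate from the $p$-integrability of the admissible family $\phi$ and property (i) of $\rho$.
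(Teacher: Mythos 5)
Your proof is correct and follows essentially the same route as the paper: use consistency to write $\rho_{S,\tau}[\phi(\tau)]=\rho_{S,\sigma}[\rho_{\sigma,\tau}[\phi(\tau)]]$, sandwich via the supermartingale property and monotonicity to get $\rho_{S,\sigma}[\phi(\sigma)]=\rho_{S,\sigma}[\rho_{\sigma,\tau}[\phi(\tau)]]$, and then invoke strict monotonicity together with the inequality $\rho_{\sigma,\tau}[\phi(\tau)]\leq\phi(\sigma)$. The only difference is the cosmetic order in which the two inequalities of the sandwich are assembled.
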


\begin{proof}
Let $\sigma \in \Theta$, such that $S \leq \sigma \leq \tau$ a.s.\\
By applying condition 2 of the lemma and the consistency of the non-linear operators $\rho$, we have
$$\phi(S) = \rho_{S, \tau}[\phi(\tau)] = \rho_{S, \sigma}[\rho_{\sigma, \tau}[\phi(\tau)]] \; a.s.$$
On the other hand, since $\phi$ is a $(\Theta, \rho)$-supermartingale family on $[S, \tau]$ (by condition 1), and since the non-linear operator $\rho_{S, \sigma}$ is monotone, we have
$$\rho_{S, \sigma}[\rho_{\sigma, \tau}[\phi(\tau)]] \leq \rho_{S, \sigma}[\phi(\sigma)] \leq \phi(S) \; a.s.$$
By combining the previous two equations, we get
$$\phi(S) = \rho_{S, \sigma}[\rho_{\sigma, \tau}[\phi(\tau)]] = \rho_{S, \sigma}[\phi(\sigma)] \; a.s.$$
In particular,
\begin{equation} \label{fourth}
\rho_{S, \sigma}[\rho_{\sigma, \tau}[\phi(\tau)]] = \rho_{S, \sigma}[\phi(\sigma)] \quad a.s. \,
\end{equation}
Due to the additional assumption, the non-linear operators $\rho$ are strictly monotone. From this, together with equality \eqref{fourth} and the inequality $\rho_{\sigma, \tau}[\phi(\tau)] \leq \phi(\sigma)$ a.s. (which is due to condition 1 of the lemma), we get
$\phi(\sigma) = \rho_{\sigma, \tau}[\phi(\tau)].$
\end{proof}

\begin{lemma}\label{optimality_criterion}
Let $\nu^{*}_{k} \in \Theta_{\theta_{k}}$. We introduce the following two conditions: \\
[0.3cm]
i) $\rho_{\theta_{k}, \nu^{*}_{k}}[V(\nu^{*}_{k})] = \rho_{\theta_{k}, \nu^{*}_{k}}[\xi(\nu^{*}_{k})]$ a.s.\\
[0.2cm]
ii) The family $(V(\nu \wedge \nu^{*}_{k}))_{\nu \in \Theta_{\theta_{k}}}$ is a $(\Theta, \rho)$-martingale family.\\
[0.3cm]
1. (Optimality criterion) If i) and ii) are satisfied, then $\nu^{*}_{k}$ is optimal for problem \eqref{optimal_stopping_problem_kbis}.\\
[0.2cm]
2. If, moreover, the non-linear operator $\rho_{\theta_{k}, \nu^{*}_{k}}$ is assumed to be strictly monotone and satisfies the assumptions of admissibility (ii), knowledge preservation (iii), consistency (v), ``generalized zero-one law'' (vi) and monotone Fatou property (vii), then the converse statement is also true.

\end{lemma}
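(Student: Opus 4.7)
The plan is to pivot both directions around the inequality chain
\[
V(\theta_{k}) = \rho_{\theta_{k},\nu^{*}_{k}}[\xi(\nu^{*}_{k})] \le \rho_{\theta_{k},\nu^{*}_{k}}[V(\nu^{*}_{k})] \le V(\theta_{k}),
\]
and to use Lemmas \ref{Lemma2525} and \ref{Lemma2626} to turn equality of the outer terms into a full martingale identity on the stochastic interval $[\theta_{k},\nu^{*}_{k}]$. For Statement~1, I would apply the $(\Theta,\rho)$-martingale property of condition~ii) with $\sigma=\theta_{k}$ and $\tau=\nu^{*}_{k}$, both of which lie in $\Theta_{\theta_{k}}$; since $\theta_{k}\wedge\nu^{*}_{k}=\theta_{k}$ and $\nu^{*}_{k}\wedge\nu^{*}_{k}=\nu^{*}_{k}$, this yields $\rho_{\theta_{k},\nu^{*}_{k}}[V(\nu^{*}_{k})]=V(\theta_{k})$. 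Combining with condition~i) gives $V(\theta_{k})=\rho_{\theta_{k},\nu^{*}_{k}}[\xi(\nu^{*}_{k})]$, and since $\nu^{*}_{k}\in\Theta_{\theta_{k}}$ sits inside the essential supremum defining $V(\theta_{k})$, this equality is exactly the optimality of $\nu^{*}_{k}$ for problem \eqref{optimal_stopping_problem_kbis} at $\nu=\theta_{k}$.

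For Statement~2, I would first derive condition~i). Starting from the optimality identity $V(\theta_{k})=\rho_{\theta_{k},\nu^{*}_{k}}[\xi(\nu^{*}_{k})]$, the inequality $\xi(\nu^{*}_{k})\le V(\nu^{*}_{k})$ from Remark \ref{TEXRemark33} (using (ii), (iii), (vi)), monotonicity~(iv), and the supermartingale property of $V$ from Theorem \ref{Thm_Snell} (using (ii), (v), (vi), (vii)) pin $V(\theta_{k})$ between $\rho_{\theta_{k},\nu^{*}_{k}}[\xi(\nu^{*}_{k})]$ and itself, forcing every inequality in the displayed chain to become an equality. This yields condition~i) immediately, together with the auxiliary identity $V(\theta_{k})=\rho_{\theta_{k},\nu^{*}_{k}}[V(\nu^{*}_{k})]$. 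Feeding this identity and the supermartingale property of $V$ into Lemma \ref{Lemma2626} (the strict monotonicity coming from the hypothesis) gives $\rho_{\sigma,\nu^{*}_{k}}[V(\nu^{*}_{k})]=V(\sigma)$ for every $\sigma\in\Theta$ with $\theta_{k}\le\sigma\le\nu^{*}_{k}$, and Lemma \ref{Lemma2525} then upgrades this to the $(\Theta,\rho)$-martingale property of $V$ on $[\theta_{k},\nu^{*}_{k}]$: for every $\nu_{1}\le\nu_{2}$ in that interval, $\rho_{\nu_{1},\nu_{2}}[V(\nu_{2})]=V(\nu_{1})$.

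The last -- and, I expect, most delicate -- step is to promote this interval-martingale identity to condition~ii), that is, to the $(\Theta,\rho)$-martingale property of the stopped family $(V(\nu\wedge\nu^{*}_{k}))_{\nu\in\Theta_{\theta_{k}}}$ in the sense of Definition \ref{def_supermartingaleBIS}. For arbitrary $\sigma\le\tau$ in $\Theta_{\theta_{k}}$, I would split $\Omega$ along the $\cf_{\sigma}$-measurable event $A=\{\sigma\le\nu^{*}_{k}\}$ and then split $A$ further along the $\cf_{\tau}$-measurable subevent $\{\tau\le\nu^{*}_{k}\}$. On $A\cap\{\tau\le\nu^{*}_{k}\}$, one has $\sigma\wedge\nu^{*}_{k}=\sigma$ and $\tau\wedge\nu^{*}_{k}=\tau$, and the identity reduces to the interval-martingale property via admissibility~(ii) and the generalized zero-one law~(vi); on $A\cap\{\tau>\nu^{*}_{k}\}$, $\tau\wedge\nu^{*}_{k}=\nu^{*}_{k}$ and I would factor $\rho_{\sigma,\tau}$ through $\nu^{*}_{k}$ by consistency~(v) before absorbing the trailing operator via knowledge preservation~(iii); on $A^{c}$, both stopped times reduce to $\nu^{*}_{k}$ and the identity collapses to $\rho_{\sigma,\tau}[V(\nu^{*}_{k})]=V(\nu^{*}_{k})$, which holds because $V(\nu^{*}_{k})$ is $\cf_{\sigma}$-measurable on that set and knowledge preservation~(iii) together with the zero-one law~(vi) render the outer operator trivial. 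The main obstacle is the coherent bookkeeping of these cases: the three sub-events involve different $\sigma$-algebras, and it is only the combined use of admissibility, consistency, knowledge preservation, and the zero-one law that makes the stitching go through cleanly.
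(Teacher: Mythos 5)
Your proposal is correct and follows essentially the same route as the paper's proof: Statement 1 is obtained by evaluating the martingale identity of condition ii) at $\sigma=\theta_{k}$, $\tau=\nu^{*}_{k}$ and combining with i), and Statement 2 is obtained from the squeeze $V(\theta_{k})=\rho_{\theta_{k},\nu^{*}_{k}}[\xi(\nu^{*}_{k})]\leq\rho_{\theta_{k},\nu^{*}_{k}}[V(\nu^{*}_{k})]\leq V(\theta_{k})$ (using $\xi\leq V$, monotonicity and the supermartingale property from Theorem \ref{Thm_Snell}) followed by Lemmas \ref{Lemma2626} and \ref{Lemma2525}. Your only addition is the final ``promotion'' of the interval-martingale identity to the stopped-family formulation of condition ii): the paper simply identifies the two (and indeed only ever uses the identity at pairs of times inside $[\theta_{k},\nu^{*}_{k}]$), so this step is not needed, and as sketched it has a wrinkle --- the sub-event $\{\tau\leq\nu^{*}_{k}\}$ is only $\mathcal{F}_{\tau}$-measurable, whereas the ``generalized zero-one law'' (vi) requires the splitting set to lie in $\mathcal{F}_{\sigma}$, so that case would need to be rerouted (e.g.\ through consistency) rather than handled by (vi) verbatim.
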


\begin{Remark}
We note that the property $V(\nu^{*}_{k}) = \xi(\nu^{*}_{k})$ a.s. implies that $\rho_{\theta_{k}, \nu^{*}_{k}}[V(\nu^{*}_{k})] = \rho_{\theta_{k}, \nu^{*}_{k}}[\xi(\nu^{*}_{k})]$ a.s. The converse implication is true under the additional assumption: $\rho_{\theta_{k}, \nu^{*}_{k}}$ is strictly monotone.

\end{Remark}

\begin{proof} First, let us show statement 1.\\
[0.2cm]
Let  $\nu^{*}_{k} \in \Theta_{\theta_{k}}$ be such that the two conditions i) and ii) introduced above are satisfied. By condition ii) the family $(V(\nu \wedge \nu^{*}_{k}))_{\nu\in\Theta_{\theta_{k}}}$ is a $(\Theta, \rho)$-martingale family.\\
[0.2cm]
Hence, for any $\nu \in \Theta_{\theta_{k}}$, we have
$$V(\theta_{k} \wedge \nu^{*}_{k}) = \rho_{\theta_{k}, \nu \wedge \nu^{*}_{k}}[V(\nu \wedge \nu^{*}_{k})] \; a.s.,$$ which implies $$ V(\theta_{k}) = \rho_{\theta_{k}, \nu \wedge \nu^{*}_{k}}[V(\nu \wedge \nu^{*}_{k})] \; a.s.$$
In particular, for $\nu = \nu^{*}_{k}$, we get
$$V(\theta_{k}) = \rho_{\theta_{k}, \nu^{*}_{k} \wedge \nu^{*}_{k}}[V(\nu^{*}_{k} \wedge \nu^{*}_{k})] = \rho_{\theta_{k}, \nu^{*}_{k}}[V(\nu^{*}_{k})] \; a.s.$$
From this, together with  condition i), we have
$$V(\theta_{k}) = \rho_{\theta_{k}, \nu^{*}_{k}}[V(\nu^{*}_{k})] = \rho_{\theta_{k}, \nu^{*}_{k}}[\xi(\nu^{*}_{k})] \; a.s.,$$
which implies that the stopping time $\nu^{*}_{k}$ is an optimal stopping time for problem \eqref{optimal_stopping_problem_kbis}.\\
Now, let us show statement 2. Let $\nu^{*}_{k} \in \Theta_{\theta_{k}}$ be an optimal stopping time for problem \eqref{optimal_stopping_problem_kbis}.
Hence, we have
$$V(\theta_{k}) = \rho_{\theta_{k}, \nu^{*}_{k}}[\xi(\nu^{*}_{k})] \; a.s.$$
By the first part of Theorem \ref{Thm_Snell} (which is applicable as $\rho$ satisfies the assumptions), the value family $V$ is a $(\Theta, \rho)$-supermartingale family.  
Thus, by the $(\Theta, \rho)$-supermartingale property of $V$, and as $\nu^{*}_{k} \in \Theta_{\theta_{k}}$, we have
$$V(\theta_{k}) \geq \rho_{\theta_{k}, \nu^{*}_{k}}[V(\nu^{*}_{k})] \; a.s.$$
On the other hand,  due to the fact that $\xi \leq V$ (cf. Remark \ref{TEXRemark33}, Statement 2) and to the monotonicity of the non-linear operator $\rho_{\theta_{k}, \nu^{*}_{k}}$, it holds 
$$\rho_{\theta_{k}, \nu^{*}_{k}}[\xi(\nu^{*}_{k})] \leq \rho_{\theta_{k}, \nu^{*}_{k}}[V(\nu^{*}_{k})] \; a.s.,$$
Thus, we get
$$V(\theta_{k}) = \rho_{\theta_{k}, \nu^{*}_{k}}[V(\nu^{*}_{k})] = \rho_{\theta_{k}, \nu^{*}_{k}}[\xi(\nu^{*}_{k})] \; a.s.$$
Moreover, since $V(\theta_{k}) = \rho_{\theta_{k}, \nu^{*}_{k}}[V(\nu^{*}_{k})]$, by applying Lemmas \ref{Lemma2525} and \ref{Lemma2626} (the latter is applicable as $\rho$ is assumed to be strictly monotone) with $S = \theta_{k}$, $\tau = \nu^{*}_{k}$, we conclude that $V$ is a $(\Theta, \rho)$-martingale on $[\theta_{k}, \nu^{*}_{k}]$.\\
The proof is complete.
\end{proof}

\subsubsection{Two useful consequences of the DPP}
The following two results hold, if a given admissible $p$-integrable family $\phi$ satisfies  the (DPP) from Eq.\eqref{Eq_def_DPP}, and if $\tilde \nu_k$ is defined by \begin{equation}\label{Eq_nu_tilde} \tilde\nu_k:=
{\rm ess} \inf \;\tilde{\cal A}_{k} \quad \mbox{where} \quad 
\tilde{\cal A}_{k}:=\{\,\tau \in \Theta_{\theta_k}\,: \,\phi({\tau}) = \xi({\tau}) \,\,\mbox {a.s.} \,\}.
\end{equation}
The following lemma is a consequence of the definition of $\tilde\nu_{k}$ and of the DPP.  

\begin{lemma}\label{TEXlemma38}
Assume that $\phi$ satisfies the  DPP holds, and let $\tilde \nu_k$ be defined by \eqref{Eq_nu_tilde}. Then, for each $l \in \{k, k+1, ...\}$, $\phi(\theta_{l}) = \rho_{\theta_{l}, \theta_{l+1}}[\phi(\theta_{l+1})]$ on the set $\{\tilde\nu_{k} > \theta_{l} \}$.

\end{lemma}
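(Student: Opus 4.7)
The plan is to exploit the DPP together with the minimality of $\tilde\nu_k$ in $\tilde{\cal A}_k$. From the DPP, we automatically have $\phi(\theta_l) \geq \rho_{\theta_l,\theta_{l+1}}[\phi(\theta_{l+1})]$ and $\phi(\theta_l) \geq \xi(\theta_l)$ on all of $\Omega$, with
$$\phi(\theta_l) = \max\bigl(\xi(\theta_l),\, \rho_{\theta_l,\theta_{l+1}}[\phi(\theta_{l+1})]\bigr).$$
So to obtain the claimed equality on $\{\tilde\nu_k > \theta_l\}$, it suffices to show the strict inequality $\phi(\theta_l) > \xi(\theta_l)$ on that set (for then the max must be attained by the second argument).

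To obtain the strict inequality, I would argue by contradiction: set
$$B := \{\tilde\nu_k > \theta_l\} \cap \{\phi(\theta_l) = \xi(\theta_l)\}$$
and suppose $P(B) > 0$. Since $\tilde\nu_k$ is a stopping time, $\{\tilde\nu_k \leq \theta_l\} \in \mathcal{F}_{\theta_l}$, hence $\{\tilde\nu_k > \theta_l\} \in \mathcal{F}_{\theta_l}$; combined with the $\mathcal{F}_{\theta_l}$-measurability of $\phi(\theta_l)$ and $\xi(\theta_l)$, this gives $B \in \mathcal{F}_{\theta_l}$. Define the candidate stopping time
$$\tau_B := \theta_l \mathbb{1}_{B} + T \mathbb{1}_{B^c}.$$
Since $B \in \mathcal{F}_{\theta_l}$ and $B^c \in \mathcal{F}_T$, $\tau_B$ belongs to $\Theta$ (by the canonical form of Bermudan strategies), and since $l \geq k$ and $\tau_B \geq \theta_l \geq \theta_k$ on $B$ while $\tau_B = T \geq \theta_k$ on $B^c$, we have $\tau_B \in \Theta_{\theta_k}$. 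By admissibility of $\phi$ and $\xi$, $\phi(\tau_B) = \phi(\theta_l) = \xi(\theta_l) = \xi(\tau_B)$ on $B$; on $B^c$, $\phi(\tau_B) = \phi(T) = \xi(T) = \xi(\tau_B)$ by the DPP at $T$. Thus $\phi(\tau_B) = \xi(\tau_B)$ a.s., so $\tau_B \in \tilde{\cal A}_k$, and by the definition of $\tilde\nu_k$ as the essential infimum, $\tilde\nu_k \leq \tau_B$ a.s. But on $B$ we have $\tau_B = \theta_l$, yielding $\tilde\nu_k \leq \theta_l$ on $B$, which contradicts $\tilde\nu_k > \theta_l$ on $B$. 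Hence $P(B) = 0$.

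We therefore have $\phi(\theta_l) > \xi(\theta_l)$ on $\{\tilde\nu_k > \theta_l\}$ (the inequality $\geq$ coming from the DPP, and equality being ruled out above), and consequently, by the DPP, $\phi(\theta_l) = \rho_{\theta_l, \theta_{l+1}}[\phi(\theta_{l+1})]$ on $\{\tilde\nu_k > \theta_l\}$, as required. There is no deep analytical obstacle here; the mild subtlety is just the measurability bookkeeping needed to show $\tau_B \in \Theta_{\theta_k}$ and $\tau_B \in \tilde{\cal A}_k$, and the observation that the DPP identity $\phi(T) = \xi(T)$ is exactly what makes the concatenation with $T$ on $B^c$ work.
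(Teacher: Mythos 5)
Your proof is correct and takes essentially the same route as the paper: both reduce the claim to the strict inequality $\phi(\theta_l) > \xi(\theta_l)$ on $\{\tilde\nu_k > \theta_l\}$ and then conclude via the DPP identity $\phi(\theta_l)=\max\bigl(\xi(\theta_l),\rho_{\theta_l,\theta_{l+1}}[\phi(\theta_{l+1})]\bigr)$. The only difference is that the paper asserts the strict inequality directly ``by the definition of $\tilde\nu_k$'', whereas you justify it explicitly through the concatenated stopping time $\tau_B=\theta_l\mathbbm{1}_B+T\mathbbm{1}_{B^c}\in\tilde{\mathcal A}_k$ and the resulting contradiction with $\tilde\nu_k=\essinf\tilde{\mathcal A}_k$, which is exactly the detail the paper leaves implicit.
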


\begin{proof}
Let $l \in \{k, k+1, ...\}$. By the definition of $\tilde\nu_{k}$, on the set $\{\tilde\nu_{k} > \theta_{l}\}$, we have $\phi(\theta_{l}) > \xi(\theta_{l})$. From this and from the DPP, we conclude that on the set $\{\tilde\nu_{k} > \theta_{l}\}$,
$\phi(\theta_{l}) = \rho_{\theta_{l}, \theta_{l+1}}[\phi(\theta_{l+1})].$
\end{proof}

\begin{lemma} \label{TEXlemma3939}
Assume that the (DPP) from Eq.\eqref{Eq_def_DPP}  holds. Under the assumptions of (iii) and ``generalized zero-one law'' (vi) on $\rho$, it holds:\\
[0.2cm]
1. For each $l \in \mathbbm{N}$, 
$\phi(\theta_{l} \wedge \tilde\nu_{k}) = \rho_{\theta_{l}, \theta_{l+1} \wedge \tilde\nu_{k}}[\phi(\theta_{l+1} \wedge \tilde\nu_{k})].$\\
[0.2cm]
2. For each $l \in \mathbbm{N}$, 
$\phi(\theta_{l} \wedge \tilde\nu_{k}) = \rho_{\theta_{l} \wedge \tilde\nu_{k}, \theta_{l+1} \wedge \tilde\nu_{k}}[\phi(\theta_{l+1} \wedge \tilde\nu_{k})].$

\end{lemma}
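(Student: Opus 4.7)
The plan is to prove both parts by a common case split on the event $B := \{\tilde\nu_k > \theta_l\}$ versus its complement $B^c = \{\tilde\nu_k \leq \theta_l\}$. I would start with Part 2, which is the cleaner of the two since $\theta_l \wedge \tilde\nu_k \leq \theta_{l+1} \wedge \tilde\nu_k$, so $\rho_{\theta_l \wedge \tilde\nu_k, \theta_{l+1} \wedge \tilde\nu_k}$ is always applied in the standard direction (first index $\leq$ second), avoiding any interpretation issue.

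The key preliminary observation is that because $\tilde\nu_k \in \Theta$ takes values in the Bermudan grid $\{\theta_j\}_{j\in\mathbb{N}} \cup \{T\}$, the strict inequality $\tilde\nu_k > \theta_l$ on $B$ forces $\tilde\nu_k \geq \theta_{l+1}$ (no grid value lies strictly between $\theta_l$ and $\theta_{l+1}$). Hence on $B$, $\theta_l \wedge \tilde\nu_k = \theta_l$ and $\theta_{l+1} \wedge \tilde\nu_k = \theta_{l+1}$, and Lemma \ref{TEXlemma38} gives the target identity of Part 2 as $\phi(\theta_l) = \rho_{\theta_l, \theta_{l+1}}[\phi(\theta_{l+1})]$. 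On $B^c$, both $\theta_l \wedge \tilde\nu_k$ and $\theta_{l+1} \wedge \tilde\nu_k$ collapse to $\tilde\nu_k$, and the identity becomes $\phi(\tilde\nu_k) = \rho_{\tilde\nu_k, \tilde\nu_k}[\phi(\tilde\nu_k)]$, which follows from knowledge preservation (iii). I would then patch these two local identities into a single global one by multiplying by the $\cf_{\theta_l \wedge \tilde\nu_k}$-measurable indicators $\mathbbm{1}_B$ and $\mathbbm{1}_{B^c}$, and invoking admissibility (ii) together with the generalized zero-one law (vi) to rewrite $\rho_{\theta_l \wedge \tilde\nu_k, \theta_{l+1} \wedge \tilde\nu_k}[\phi(\theta_{l+1} \wedge \tilde\nu_k)]$ as $\rho_{\theta_l, \theta_{l+1}}[\phi(\theta_{l+1})]$ on $B$ and as $\phi(\tilde\nu_k)$ on $B^c$.

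Once Part 2 is established, Part 1 follows rapidly: on $B$ the two statements are identical since $\theta_l \wedge \tilde\nu_k = \theta_l$; on $B^c$ the right-hand side of Part 1 is $\rho_{\theta_l, \tilde\nu_k}[\phi(\tilde\nu_k)]$, whose argument is already $\cf_{\tilde\nu_k}$-measurable and whose value collapses to $\phi(\tilde\nu_k)$ by a further appeal to knowledge preservation (iii).

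I expect the main obstacle to be the bookkeeping in the application of (vi): this axiom requires the auxiliary stopping times to lie in $\Theta_S$ and the indicator set to lie in $\cf_S$. Hence I will need to construct suitable global extensions of the local expressions $\theta_{l+1} \wedge \tilde\nu_k$ and $\theta_l \wedge \tilde\nu_k$ (for instance by pasting with $T$ outside the set on which the local identity is needed) that remain valid Bermudan stopping times above $S = \theta_l \wedge \tilde\nu_k$, before invoking (vi) case by case to recover the global equality from the two pieces.
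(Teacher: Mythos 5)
Your proposal is correct and follows essentially the same route as the paper: the same case split on $\{\tilde\nu_k > \theta_l\}$ versus $\{\tilde\nu_k \leq \theta_l\}$, with Lemma \ref{TEXlemma38} handling the first set and knowledge preservation (iii) combined with the generalized zero-one law (vi) handling the second, then patching via the indicators. The only difference is that you establish Part 2 first and deduce Part 1 from it, whereas the paper proves Part 1 first and deduces Part 2; both orderings work, and the bookkeeping caveat you raise about applying (vi) when the auxiliary times need not lie in $\Theta_S$ applies equally to the paper's own argument.
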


\begin{proof}
First, we show Statement 1 of the Lemma.
\begin{equation}\label{TEXEQUAtiON242424}
\begin{aligned}
\rho_{\theta_{l}, \theta_{l+1} \wedge \tilde\nu_{k}}[\phi(\theta_{l+1} \wedge \tilde\nu_{k})] = 
\mathbbm{1}_{\{\tilde\nu_{k} \leq \theta_{l}\}}\rho_{\theta_{l}, \theta_{l+1} \wedge \tilde\nu_{k}}[\phi(\theta_{l+1} \wedge \tilde\nu_{k})]\\
+ \mathbbm{1}_{\{\tilde\nu_{k} > \theta_{l}\}}\rho_{\theta_{l}, \theta_{l+1} \wedge \tilde\nu_{k}}[\phi(\theta_{l+1} \wedge \tilde\nu_{k})].
\end{aligned}
\end{equation}
For the first summand in Eq.\eqref{TEXEQUAtiON242424}, we note that on the set $\{\tilde\nu_{k} \leq \theta_{l}\}$, $\theta_{l+1}\wedge\tilde\nu_{k} = \theta_{l} \wedge \tilde\nu_{k} = \tilde\nu_{k}$. Hence, by the ``generalized zero-one law'', we have
\begin{equation}\label{TEXEQUATION242424}
\mathbbm{1}_{\{\tilde\nu_{k} \leq \theta_{l}\}}\rho_{\theta_{l}, \theta_{l+1} \wedge \tilde\nu_{k}}[\phi(\theta_{l+1} \wedge \tilde\nu_{k})] = \mathbbm{1}_{\{\tilde\nu_{k} \leq \theta_{l}\}}\rho_{\theta_{l}, \theta_{l} \wedge \tilde\nu_{k}}[\phi(\theta_{l} \wedge \tilde\nu_{k})] = \mathbbm{1}_{\{\tilde\nu_{k} \leq \theta_{l}\}}\phi(\theta_{l} \wedge \tilde\nu_{k}),
\end{equation}
where we have used property (iii) to obtain the last equality (as $\theta_{l} \wedge \tilde\nu_{k} \leq \theta_{l}$).\\
For the second summand in Eq.\eqref{TEXEQUAtiON242424}, we use the ``generalized zero-one law'' to write:
\begin{equation}\label{TEXEQUATION252525}
\mathbbm{1}_{\{\tilde\nu_{k} > \theta_{l}\}}\rho_{\theta_{l}, \theta_{l+1} \wedge \tilde\nu_{k}}[\phi(\theta_{l+1} \wedge \tilde\nu_{k})] = \mathbbm{1}_{\{\tilde\nu_{k} > \theta_{l}\}}\rho_{\theta_{l}, \theta_{l+1}}[\phi(\theta_{l+1})] = \mathbbm{1}_{\{\tilde\nu_{k} > \theta_{l}\}}\phi(\theta_{l}),
\end{equation}
where we have applied Lemma \ref{TEXlemma38} to obtain the last equality.\\
Hence, by replacing Eqs \eqref{TEXEQUATION242424} and \eqref{TEXEQUATION252525} in Eq. \eqref{TEXEQUAtiON242424}, we get
$$\rho_{\theta_{l}, \theta_{l+1} \wedge \tilde\nu_{k}}[\phi(\theta_{l+1} \wedge \tilde\nu_{k})] = \mathbbm{1}_{\{\tilde\nu_{k} \leq \theta_{l}\}}\phi(\theta_{l} \wedge \tilde\nu_{k}) + \mathbbm{1}_{\{\tilde\nu_{k} >\theta_{l}\}}\phi(\theta_{l} \wedge \tilde\nu_{k}) = \phi(\theta_{l} \wedge \tilde\nu_{k}).$$
We now prove Statement 2 of the Lemma.
\begin{equation}\label{TEXEQUAtiON2666}
\begin{aligned}
\rho_{\theta_{l} \wedge \tilde\nu_{k}, \theta_{l+1} \wedge \tilde\nu_{k}}[\phi(\theta_{l+1} \wedge \tilde\nu_{k})] = 
\mathbbm{1}_{\{\tilde\nu_{k} \leq \theta_{l}\}}\rho_{\theta_{l} \wedge \tilde\nu_{k}, \theta_{l+1} \wedge \tilde\nu_{k}}[\phi(\theta_{l+1} \wedge \tilde\nu_{k})]\\
+ \mathbbm{1}_{\{\tilde\nu_{k} >\theta_{l}\}}\rho_{\theta_{l} \wedge \tilde\nu_{k}, \theta_{l+1} \wedge \tilde\nu_{k}}[\phi(\theta_{l+1} \wedge \tilde\nu_{k})].
\end{aligned}
\end{equation}
For the second summand of Eq.\eqref{TEXEQUAtiON2666}, by Statement 1 of the Lemma, we get
\begin{align*}
\mathbbm{1}_{\{\tilde\nu_{k} > \theta_{l}\}}\rho_{\theta_{l} \wedge \tilde\nu_{k}, \theta_{l+1} \wedge \tilde\nu_{k}}[\phi(\theta_{l+1} \wedge \tilde\nu_{k})] &= \mathbbm{1}_{\{\tilde\nu_{k} > \theta_{l}\}}\rho_{\theta_{l}, \theta_{l+1} \wedge \tilde\nu_{k}}[\phi(\theta_{l+1} \wedge \tilde\nu_{k})]\\
&= \mathbbm{1}_{\{\tilde\nu_{k} > \theta_{l}\}}\phi(\theta_{l} \wedge \tilde\nu_{k}).
\end{align*}
For the first summand of Equation \eqref{TEXEQUAtiON2666}, we apply the ``generalized zero-one law'' (as the set $\{\tilde\nu_{k} \leq \theta_{l}\}$ is $\mathcal{F}_{\theta_{l} \wedge \tilde\nu_{k}}$-measurable and on the set $\{\tilde\nu_{k} \leq \theta_{l}\}$, we have $\theta_{l+1} \wedge \tilde\nu_{k} = \theta_{l} \wedge \tilde\nu_{k}$), we have
\begin{align*}
\mathbbm{1}_{\{\tilde\nu_{k} \leq \theta_{l}\}}\rho_{\theta_{l} \wedge \tilde\nu_{k}, \theta_{l+1} \wedge \tilde\nu_{k}}[\phi(\theta_{l+1} \wedge \tilde\nu_{k})] &= \mathbbm{1}_{\{\tilde\nu_{k} \leq \theta_{l}\}}\rho_{\theta_{l} \wedge \tilde\nu_{k}, \theta_{l} \wedge \tilde\nu_{k}}[\phi(\theta_{l} \wedge \tilde\nu_{k})]\\
&= \mathbbm{1}_{\{\tilde\nu_{k} \leq \theta_{l}\}}\phi(\theta_{l} \wedge \tilde\nu_{k}),
\end{align*}
where we have used property (iii) on $\rho$ to obtain the last equality.\\
Finally, we get 
\begin{align*}
\rho_{\theta_{l} \wedge \tilde\nu_{k}, \theta_{l+1} \wedge \tilde\nu_{k}}[\phi(\theta_{l+1} \wedge \tilde\nu_{k})] &= \mathbbm{1}_{\{\tilde\nu_{k} \leq \theta_{l}\}}\phi(\theta_{l} \wedge \tilde\nu_{k}) + \mathbbm{1}_{\{\tilde\nu_{k} \geq \theta_{l + 1}\}}\phi(\theta_{l} \wedge \tilde\nu_{k})\\
&= \phi(\theta_{l} \wedge \tilde\nu_{k}).
\end{align*}
\vspace{-0.5cm}
\end{proof}
\subsubsection{The proof of the optimality of $\nu_k$}
We are now ready to prove Theorem \ref{prop_martingale_optimality_principle_general} on the optimality of the Bermudan stopping time  $\nu_{k}$, defined by \eqref{nu_zerobis}. We will need also the following remark:

\begin{Remark} \label{TEXremark1}
Any admissible  family $(\phi(\tau), \tau \in \Theta)$ in our framework is \textbf{right-continuous along Bermudan stopping times}, that is, for all $\tau \in \Theta$, and for all non-increasing sequences of Bermudan  stopping times $(\tau_{n}) \in \Theta^{\mathbbm{N}}$ such that $\tau_{n} \downarrow \tau$, it holds 
$\lim_{n \to +\infty}\phi(\tau_{n}) = \phi(\tau).$\\
[0.2cm]
Indeed, let $\tau \in \Theta$, and let $(\tau_{n}) \in \Theta^{\mathbbm{N}}$ be such that $\tau_{n} \downarrow \tau$.
For each $n$, we have  
$\tau_{n} = \sum_{l= 0}^{+\infty}\theta_{l}\mathbbm{1}_{A_{l}^{(n)}}+T\mathbbm{1}_{\bar A^{(n)}}$  and   $\tau = \sum_{l= 0}^{+\infty}\mathbbm{1}_{A_{l}}\theta_{l}+T\mathbbm{1}_{\bar A}$ (cf. the canonical writing from Remark \ref{Rk_canonical}).\\
Let $\omega \in \Omega$. Recall that $\{(A_{l})_{l\in\N},\bar A\}$ is a partition of $\Omega$. If $\omega\in\bar A,$ then $\tau(\omega)=T=\tau_n(\omega)$ (as $\tau_n\downarrow\tau$).  Otherwise, there exists a unique $l_{0} = l_{0}(\omega)$, such that $\omega \in A_{l_{0}}$, and $\tau(\omega) = \theta_{l_{0}}(\omega)<T$. Then, as $\tau_{n}(\omega) \downarrow \tau(\omega)$ and as $\theta_{k}(\omega) \uparrow T$, after a certain rank $n_{0} = n_{0}(\omega)$, $\tau_{n}(\omega) = \tau(\omega) = \theta_{l_{0}}(\omega)$.\\
Hence, in both cases, there exists $n_{0} = n_{0}(\omega)$ such that for all $n \geq n_{0}$, $\tau_{n}(\omega) = \tau(\omega)$, and, hence,  by Remark \ref{Rmk_on_admissibility}, for all $n \geq n_{0}$, $\phi(\tau_{n})(\omega) = \phi(\tau)(\omega)$.  We conclude that $\lim_{n \to +\infty}\phi(\tau_{n})(\omega) = \phi(\tau)(\omega).$

\end{Remark}

\begin{proof}[Proof of Theorem \ref{prop_martingale_optimality_principle_general}]
By Lemma \ref{optimality_criterion}, in order to show that $\nu_{k}$, defined in \eqref{nu_zerobis},  is optimal for problem \eqref{optimal_stopping_problem_kbis}, it is enough to show the following two conditions:\\
[0.2cm]
\textbf{(i) } $\rho_{\theta_{k}, \nu_{k}}[V(\nu_{k})] = \rho_{\theta_{k}, \nu_{k}}[\xi(\nu_{k})]$ a.s.;\\
[0.2cm]
\textbf{(ii)} The family $(V(\nu))$ is a $(\Theta, \rho)$-martingale on $[\theta_{k}, \nu_{k}]$.\\
[0.2cm]
We start our proof by showing the second condition first.\\
By Lemma \ref{Lemma_admissible}, $V$ is an admissible family, and it is also $p$-integrable by Assumption \ref{Hyp3}.  By Lemma \ref{Lemma2525} (on the $(\Theta, \rho)$-martingale property), in order to show the second condition, it is enough to show that: for each $\sigma \in \Theta$, such that $\theta_{k} \leq \sigma \leq \nu_{k}$, 
\begin{equation} \label{properTY26}
\rho_{\sigma, \nu_{k}}[V(\nu_{k})] = V(\sigma).
\end{equation}
Let $\sigma \in \Theta_{\theta_{k}}$. Then $\sigma$ is of the form $\sigma = \sum_{m \geq k}\theta_{m}\mathbbm{1}_{A_{m}}+T\mathbbm{1}_{\bar A}$, where $\{(A_{m})_{m\in\N},\bar A\}$ form a partition of $\Omega$; $A_{m}$ is $\mathcal{F}_{\theta_{m}}$-measurable for each $m$, and  $\bar A\in\cf_T.$\\
Hence, to prove Equation \eqref{properTY26}, it is enough to show that, for each $m \geq k$,
\begin{equation}\label{TEXEQuaTIoN2888}
\mathbbm{1}_{A_{m}}\rho_{\theta_{m}, \nu_{k}}[V(\nu_{k})] = \mathbbm{1}_{A_{m}}V(\theta_{m}), \; \; \;
\end{equation}
and 
\begin{equation}\label{NEWTEXEQuaTIoN2888}
\mathbbm{1}_{\bar A}\rho_{T, \nu_{k}}[V(\nu_{k})] = \mathbbm{1}_{\bar A}V(T). \; \; \;
\end{equation}
As $\sigma=\nu_k=T$ on $\bar A$, Eq.\eqref{NEWTEXEQuaTIoN2888} holds true, by the "generalized" zero-one law (vi) and the knowledge preserving property (iii).   \\
Let $m \geq k$ be fixed. The proof of Eq. \eqref{TEXEQuaTIoN2888} passes through the following steps:\\
[0.2cm]
\textbf{$1^{st}$ Step:} As $V$ satisfies the DPP (cf. Theorem \ref{TEXtheorem1}), we use  Lemma \ref{TEXlemma3939}, Statement 2 and the consistency property on $\rho$, to show that, for each fixed $n \in \mathbbm{N}$,
\begin{equation}\label{TEXEQuatIoN2999}
\rho_{\theta_{m} \wedge \nu_{k}, \theta_{m+n} \wedge \nu_{k}}[V(\theta_{m+n} \wedge \nu_{k})] = V(\theta_{m} \wedge \nu_{k}).
\end{equation}
Indeed, by applying successively $(n-m-1)$ times the consistency property on $\rho$ (if $n-m \geq 2$), we get 
\begin{align*}
\rho_{\theta_{m} \wedge \nu_{k}, \theta_{m+n} \wedge \nu_{k}}[V(\theta_{m+n} \wedge \nu_{k})]
= \rho_{\theta_{m} \wedge \nu_{k}, \theta_{m+1} \wedge \nu_{k}}[\rho_{\theta_{m+1} \wedge \nu_{k}, \theta_{m+2} \wedge \nu_{k}}[...\\
[\rho_{\theta_{n+m-1} \wedge \nu_{k}, \theta_{m+n} \wedge \nu_{k}}[V(\theta_{m+n} \wedge \nu_{k})]]]].
\end{align*}
By applying Lemma \ref{TEXlemma3939}, Statement 2, again successively $(n-m)$ times, we get
$$\rho_{\theta_{m} \wedge \nu_{k}, \theta_{m+n} \wedge \nu_{k}}[V(\theta_{m+n} \wedge \nu_{k})] = V(\theta_{m} \wedge \nu_{k}),$$
which proves Equation \eqref{TEXEQuatIoN2999}.\\
[0.2cm] 
Hence, the sequence of random variables $(\rho_{\theta_{m} \wedge \nu_{k}, \theta_{m+n} \wedge \nu_{k}}[V(\theta_{m+n} \wedge \nu_{k})])_{n \in \mathbbm{N}}$ does not depend on $n$ and is constantly equal to the random variable $V(\theta_{m} \wedge \nu_{k})$.\\
[0.2cm] 
\textbf{$2^{nd}$ Step:} As $V$ is left-upper-semicontinuous (LUSC) along the sequence  $(\theta_{m+n}\wedge\nu_{k})_{n\in\N}$ by Assumption \ref{HypA}, and as $\rho$ is LUSC along Bermudan stopping strategies with respect to terminal condition and  terminal time at $(\nu_{k})$, we have
\begin{align*}
\limsup_{n \to +\infty}\rho_{\theta_{m} \wedge \nu_{k}, \theta_{m+n} \wedge \nu_{k}}[V(\theta_{m+n} \wedge \nu_{k})] &\leq \rho_{\theta_{m} \wedge \nu_{k}, \nu_{k}}[\limsup_{n \to +\infty}V(\theta_{m+n} \wedge \nu_{k})]\\
&\leq \rho_{\theta_{m} \wedge \nu_{k}, \nu_{k}}[V(\nu_{k})],
\end{align*}
where we have used the monotonicity of $\rho$ and Assumption \ref{HypA} on $V$ to obtain the last inequality. Hence, 
$$V(\theta_{m} \wedge \nu_{k}) \leq \rho_{\theta_{m} \wedge \nu_{k}, \nu_{k}}[V(\nu_{k})].$$
The opposite inequality holds true due to the $(\Theta, \rho)$-supermartingale property of $V$ (cf. Theorem \ref{Thm_Snell}, Statement 1). Hence, we have the equality, that is, 
$$V(\theta_{m} \wedge \nu_{k}) = \rho_{\theta_{m} \wedge \nu_{k}, \nu_{k}}[V(\nu_{k})],$$ and Eq.\eqref{TEXEQuaTIoN2888} is established.\\   
\textbf{$3^{rd}$ Step:} From the above Eqs \eqref{NEWTEXEQuaTIoN2888} and \eqref{TEXEQuaTIoN2888}, it follows: 
$$\rho_{\sigma \wedge \nu_{k}, \nu_{k}}[V(\nu_{k})] = V(\sigma \wedge \nu_{k}),$$
(as $\sigma = \sum_{m \geq k}\theta_{m}\mathbbm{1}_{A_{m}}+T\mathbbm{1}_{\bar A}$), which proves Eq.\eqref{properTY26}.\\
[0.2cm]
We conclude, by Lemma \ref{Lemma2525}, that $V$ is a $(\Theta, \rho)$-martingale on the stochastic interval $[\theta_{k}, \nu_{k}]$. This shows condition (ii) in the optimality criterion of Lemma \ref{optimality_criterion}.\\
[0.2cm]
It remains for us to show condition (i) in the optimality criterion. Let us recall that $\nu_{k} = \essinf \mathcal{A}_{k}$, where $\mathcal{A}_{k} = \{\tau \in \Theta_{\theta_{k}}: \; V(\tau) = \xi(\tau) \; a.s.\}$. Let $(\tau_{n})$ be a non-increasing sequence in $\mathcal{A}_{k}$, such that $\lim_{n \to +\infty}\downarrow\tau_{n} = \nu_{k}$. As $\tau_{n}$ is in $\mathcal{A}_{k}$, we have $V(\tau_{n}) = \xi(\tau_{n})$. By passing to the limit in this equality and by using that both families $V$ and $\xi$ are right-continuous along the sequence of Bermudan stopping strategies $(\tau_{n})$ (cf. Remark \ref{TEXremark1}), we obtain 
\begin{equation}\label{Eq_equality at nu}
V(\nu_{k}) = \xi(\nu_{k}), 
\end{equation}
which proves condition (i).\\
This concludes the proof of the optimality of $\nu_{k}$.
\end{proof}
\subsubsection{Assumption \ref{HypA} on $V$: Discussion}
Let us now check under which conditions Assumption \ref{HypA} on $V$  holds true.\\
[0.3cm]
Under the assumptions $(a), (b)$ on $\Theta$, the set $\{\omega \in \Omega: \nu_{k}(\omega) = T, \theta_{l}(\omega) < T, \text{for all } l\in\N\}$ might be non-empty. We will show the following lemma.

\begin{lemma}\label{TEXlemmaA2929}
Let $\rho$ satisfy the properties of knowledge preservation (iii), monotonicity (iv), and consistency (v), and the following property
\begin{equation}\label{Eq_property rho}
\limsup_{n \to +\infty}\rho_{\theta_{n}, T}[\eta] \leq \rho_{T, T}[\eta], \text { for all } \eta\in L^p(\cf_T).
\end{equation}
Then,  
\begin{equation}\label{TEXEQuationN277}
\limsup_{n \to +\infty}V(\theta_{n}) \leq \limsup_{n \to +\infty}\xi(\theta_{n}) \vee \xi(T).
\end{equation}

\end{lemma}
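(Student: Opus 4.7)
The plan is to establish for each $n$ the pointwise estimate $V(\theta_n) \leq \rho_{\theta_n, T}[Z_n]$, where
\[
Z_n := \esssup_{k \geq n} \xi(\theta_k) \vee \xi(T)
\]
is a non-increasing sequence of $\cf_T$-measurable random variables with $\inf_n Z_n = \limsup_{n \to +\infty} \xi(\theta_n) \vee \xi(T)$, and then to pass to the $\limsup$ in $n$ using the semicontinuity hypothesis \eqref{Eq_property rho}.

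For the pointwise bound, fix $n \in \N$ and $\tau \in \Theta_{\theta_n}$. By the decomposition \eqref{form} of $\tau$ and the admissibility formula \eqref{formula}, for $P$-a.e. $\omega$ the value $\xi(\tau)(\omega)$ equals either $\xi(\theta_k)(\omega)$ for some $k \geq n$ or $\xi(T)(\omega)$; hence $\xi(\tau) \leq Z_n$. Since $\xi(\tau) \in L^p(\cf_\tau)$, knowledge preservation (iii) gives $\rho_{\tau, T}[\xi(\tau)] = \xi(\tau)$, and monotonicity (iv) then yields $\xi(\tau) \leq \rho_{\tau, T}[Z_n]$. Applying $\rho_{\theta_n, \tau}[\cdot]$ to both sides (again by (iv)) and invoking consistency (v), we obtain
\[
\rho_{\theta_n, \tau}[\xi(\tau)] \;\leq\; \rho_{\theta_n, \tau}[\rho_{\tau, T}[Z_n]] \;=\; \rho_{\theta_n, T}[Z_n].
\]
Taking the essential supremum over $\tau \in \Theta_{\theta_n}$ gives the claimed pointwise estimate.

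To pass to the limit, fix $m \in \N$. Since $Z_n \leq Z_m$ whenever $n \geq m$, monotonicity (iv) gives $V(\theta_n) \leq \rho_{\theta_n, T}[Z_n] \leq \rho_{\theta_n, T}[Z_m]$ for every $n \geq m$. Taking $\limsup_{n \to +\infty}$ and applying hypothesis \eqref{Eq_property rho} together with $\rho_{T, T}[Z_m] = Z_m$ (from (iii)), we deduce $\limsup_n V(\theta_n) \leq Z_m$. Letting $m \to +\infty$ and using $\inf_m Z_m = \limsup_n \xi(\theta_n) \vee \xi(T)$ concludes the argument.

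The main technical obstacle is ensuring that $Z_n \in L^p(\cf_T)$, which is tacitly required in order for $\rho_{\tau, T}[Z_n]$ and $\rho_{\theta_n, T}[Z_n]$ to be well-defined and for monotonicity/consistency to apply. If the pay-off family is not uniformly dominated by a $p$-integrable envelope, one would instead run the argument with the truncation $Z_n \wedge M \in L^\infty$, obtain $\limsup_n V(\theta_n) \leq \inf_m (Z_m \wedge M)$, and then let $M \to +\infty$ — a step that would call for a monotone continuity from below of $\rho_{\theta_n, T}[\cdot]$ with respect to the terminal condition, in the same spirit as property (vii).
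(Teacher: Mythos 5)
Your proposal is correct and follows essentially the same route as the paper's proof: bound $\xi(\tau)$ for $\tau\in\Theta_{\theta_m}$ by $\sup_{k\geq m}\xi(\theta_k)\vee\xi(T)$ via the decomposition \eqref{formula}, use knowledge preservation, monotonicity and consistency to get $V(\theta_n)\leq \rho_{\theta_n,T}[\bar\eta]$, apply \eqref{Eq_property rho}, and let $m\to+\infty$. Your closing remark on the tacit $p$-integrability of the envelope $\sup_{k\geq m}\xi(\theta_k)\vee\xi(T)$ is a fair observation — the paper makes the same implicit assumption without comment.
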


\begin{proof}
 
For each $m\in\N$, for each $\tau \in \Theta_{\theta_{m}}$, $\xi(\tau) \leq \sup_{p \geq m}\xi(\theta_{p}) \vee \xi(T)$.\\
[0.2cm]
Indeed, for each $\tau \in \Theta_{\theta_{m}}$, we have
$$\xi(\tau) = \sum_{l \geq m}\xi(\theta_{l})\mathbbm{1}_{A_{l}} + \xi(T)\mathbbm{1}_{\bar{A}} \leq \eta\mathbbm{1}_{\bar{A}^{c}} + \xi(T)\mathbbm{1}_{\bar{A}} \leq \eta \vee \xi(T),$$
where we have set $\eta := \sup_{p \geq m}\xi(\theta_{p})$. 
Let us define $\bar{\eta} \coloneqq \eta \vee \xi(T) = \sup_{p \geq m}\xi(\theta_{p}) \vee \xi(T)$.\\
[0.2cm]
By the monotonicity of $\rho$ and the knowledge preserving property on $\rho$, we have, for all $\tau\in \Theta_{\theta_{m}}, $
\begin{equation}\label{TEXEquationN288}
\xi(\tau)=\rho_{\tau, T}[\xi(\tau)] \leq \rho_{\tau, T}[\eta \vee \xi(T)]=\rho_{\tau, T}[\bar \eta].
\end{equation}
Hence, for each $n \geq m$ (as $\Theta_{n} \subset \Theta_{m}$),
$$V(\theta_{n}) = \esssup_{\tau \in \Theta_{\theta_{n}}}\rho_{\theta_{n}, \tau}[\xi(\tau)] \leq \esssup_{\tau \in \Theta_{\theta_{n}}}\rho_{\theta_{n}, \tau}[\rho_{\tau, T}[\bar{\eta}]]$$
where we have used the monotonicity of $\rho$ and \eqref{TEXEquationN288} for the inequality. By the consistency property on $\rho$, we get
$\rho_{\theta_{n}, \tau}[\rho_{\tau, T}[\bar{\eta}]] = \rho_{\theta_{n}, T}[\bar{\eta}].$\\
Finally, 
$V(\theta_{n}) \leq \esssup_{\tau \in \Theta_{\theta_{n}}}\rho_{\theta_{n}, T}[\bar{\eta}] = \rho_{\theta_{n}, T}[\bar{\eta}].$\\
Hence, 
$\limsup_{n \to +\infty}V(\theta_{n}) \leq \limsup_{n \to +\infty}\rho_{\theta_{n}, T}[\bar{\eta}].$\\
As, by assumption on $\rho$,
$\limsup_{n \to +\infty}\rho_{\theta_{n}, T}[\bar{\eta}] \leq \rho_{T, T}[\bar{\eta}],$
 we obtain 
$$\limsup_{n \to +\infty}V(\theta_{n}) \leq \rho_{T, T}[\bar{\eta}] = \bar{\eta} = \sup_{p \geq m}\xi(\theta_{p}) \vee \xi(T).$$
Hence, by passing to the limit when $m \to +\infty$, we get
$$\limsup_{n \to +\infty}V(\theta_{n}) \leq \limsup_{m \to +\infty}\xi(\theta_{m}) \vee \xi(T),$$
which finishes the proof of the lemma.
\end{proof}

\begin{Assumption}\label{ASSu}
We assume that the pay-off family $\xi$ is LUSC along the sequence $(\theta_n)$ at $T$, that is, $\xi$ is   such that $$\limsup_{n \to +\infty} \xi(\theta_n) \leq \xi(T).$$

\end{Assumption}
\begin{proposition}\label{Prop_final}
If the pay-off family $\xi$ satisfies Assumption \ref{ASSu} (LUSC along the sequence $(\theta_n)$ at $T$), and  if $\rho$ satisfies the properties of admissibility (ii),  knowledge preservation (iii), monotonicity (iv), consistency (v),  "generalized" zero-one law (vi), and LUSC along the sequence $(\theta_n)$ at $T$ (property \eqref{Eq_property rho}), then the value family $V$ satisfies Assumption \ref{HypA}.
\end{proposition}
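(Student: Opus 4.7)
My plan is to decompose $\Omega$ into two complementary events according to whether the sequence $(\theta_{n})$ eventually catches up with $\nu_{k}$ or not, and to prove the desired $\limsup$ inequality separately on each event. Concretely, set
\[
A_{1} := \{\omega : \exists\, n_{0}(\omega), \, \forall n \geq n_{0}, \, \theta_{n}(\omega) \geq \nu_{k}(\omega)\},
\qquad A_{2} := \Omega \setminus A_{1}.
\]
Since $(\theta_{n})$ is non-decreasing with $\theta_{n} \uparrow T$, the event $A_{2}$ coincides (up to a null set) with $\{\nu_{k} = T\} \cap \{\theta_{n} < T \text{ for all } n\}$; in particular, on $A_{2}$ we have $\theta_{n} \wedge \nu_{k} = \theta_{n}$, and $\nu_{k} = T$.

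On $A_{1}$, by definition, the integer sequence $\theta_{n} \wedge \nu_{k}$ is eventually equal to $\nu_{k}$. The family $V$ is admissible by Lemma \ref{Lemma_admissible} (which applies under the standing assumptions on $\rho$), so Remark \ref{Rmk_on_admissibility} yields that, for each $\omega \in A_{1}$, $V(\theta_{n} \wedge \nu_{k})(\omega) = V(\nu_{k})(\omega)$ for all $n$ large enough. Hence $\mathbbm{1}_{A_{1}} \limsup_{n} V(\theta_{n} \wedge \nu_{k}) = \mathbbm{1}_{A_{1}} V(\nu_{k})$ a.s.

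On $A_{2}$, we use Lemma \ref{TEXlemmaA2929}, which is applicable because $\rho$ is assumed here to satisfy knowledge preservation, monotonicity, consistency, and the LUSC property \eqref{Eq_property rho}. It yields
\[
\limsup_{n \to +\infty} V(\theta_{n}) \leq \limsup_{n \to +\infty} \xi(\theta_{n}) \vee \xi(T).
\]
By Assumption \ref{ASSu}, $\limsup_{n} \xi(\theta_{n}) \leq \xi(T)$, so the right-hand side equals $\xi(T)$. Moreover, on $A_{2}$, $\nu_{k} = T$, and by the knowledge preservation property (iii) together with the admissibility of $V$ (Remark \ref{Rmk_consequence_admissible}), we have $V(\nu_{k}) = V(T) = \rho_{T, T}[\xi(T)] = \xi(T)$. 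Thus $\mathbbm{1}_{A_{2}} \limsup_{n} V(\theta_{n} \wedge \nu_{k}) = \mathbbm{1}_{A_{2}} \limsup_{n} V(\theta_{n}) \leq \mathbbm{1}_{A_{2}} \xi(T) = \mathbbm{1}_{A_{2}} V(\nu_{k})$ a.s.

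Adding the two estimates gives $\limsup_{n} V(\theta_{n} \wedge \nu_{k}) \leq V(\nu_{k})$ a.s., which is exactly Assumption \ref{HypA}. I expect the only delicate point to be handling the $\omega$-wise decomposition cleanly: one must verify measurability of $A_{1}$ (it is a $\cf_{T}$-event as a countable union/intersection of sets of the form $\{\theta_{n} \geq \nu_{k}\}$) and be attentive that on $A_{2}$ the equality $\theta_{n} \wedge \nu_{k} = \theta_{n}$ and $\nu_{k} = T$ both hold simultaneously, so that Lemma \ref{TEXlemmaA2929} and knowledge preservation combine correctly with Assumption \ref{ASSu}. Everything else is a mechanical assembly of the lemmas already proved.
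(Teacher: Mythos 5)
Your proof is correct and follows essentially the paper's own argument: the paper splits on $\{\nu_k<T\}$ versus $\{\nu_k=T\}$, using Remark \ref{Rmk_on_admissibility} (eventual stationarity of $\theta_n\wedge\nu_k$) on the first set and Lemma \ref{TEXlemmaA2929} together with Assumption \ref{ASSu} and $V(T)=\xi(T)$ on the second. Your partition $A_1/A_2$ only reshuffles the null-to-harmless piece $\{\nu_k=T,\ \theta_n=T \text{ eventually}\}$ into the stationary case, so the two proofs coincide in substance.
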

\begin{proof}
On the set $\{\nu_k<T\}$, we have, for all $n$ after a certain rank $\bar n(\omega)$ (depending on $\omega$), 
$(\theta_{n}\wedge \nu_k)(\omega)=\nu_k(\omega)$ ((due to $\lim\uparrow\theta_{n}=T$). Hence, by Remark \ref{Rmk_on_admissibility}), for all $n\geq n_0(\omega)$, $V(\theta_{n}\wedge \nu_k)(\omega)=V(\nu_k)(\omega)$
 Hence, 
$$\mathbbm{1}_{\{\nu_k<T\}}\limsup_{n \to +\infty} V(\theta_n\wedge \nu_k)=\mathbbm{1}_{\{\nu_k<T\}}\lim_{n \to +\infty} V(\theta_n\wedge \nu_k)=\mathbbm{1}_{\{\nu_k<T\}} V( \nu_k).$$
On the other hand, under the Assumption \ref{ASSu} on $\xi$, we have, by Lemma \ref{TEXlemmaA2929}, 
$$\limsup_{n \to +\infty}V(\theta_{n}) \leq \limsup_{n \to +\infty}\xi(\theta_{n})\vee\xi(T) \leq \xi(T) = V(T).$$
Hence, using the admissibility of $V$, $$\mathbbm{1}_{\{\nu_k=T\}}\limsup_{n \to +\infty}V(\theta_{n}\wedge \nu_k)=\mathbbm{1}_{\{\nu_k=T\}}\limsup_{n \to +\infty}V(\theta_{n})\leq \mathbbm{1}_{\{\nu_k=T\}}V(T)=\mathbbm{1}_{\{\nu_k=T\}}V(\nu_k)$$
So, the desired LUSC property at $\nu_k$ on $V$ (Assumption \ref{HypA})  holds true.
\end{proof}
\section{Examples} \label{SectioN3}
In this section we provide some examples of non-linear operators $\rho$, known from the stochastic control and mathematical finance literature, which enter into our framework.
\subsection{Non-linear operators induced by BSDEs}
In this example, $p = 2$.
\subsubsection{The $g$-evaluations}
\cite{Pe04} and \cite{EQ} introduced a type of non-linear evaluation, now known as $g$-evaluation, via a non-linear backward stochastic differential equation (BSDE) with a driver $g$.\\
Let $T > 0$ be a fixed time horizon. We place ourselves in the Brownian framework (for simplicity). Let $(\Omega, \mathcal{F},  \mathbbm{P})$ be a complete  probability space, endowed with a $d$-dimensional Brownian motion $(W_{t})_{t \in [0, T]}$, and let $(\mathcal{F}_{t})_{t \in [0, T]}$ be  the (augmented) natural filtration of the Brownian motion. \\
[0.2cm]
Let $g = g(\omega, t, y, z): \Omega \times [0, T] \times \mathbbm{R} \times \mathbbm{R}^{d} \to \mathbbm{R}$ be \emph{Lipschitz driver}, that is, a function satisfying the following conditions:\\
[0.2cm]
\ding{192} For each $y \in \mathbbm{R}, z \in \mathbbm{R}^{d}$, $g(\cdot, \cdot, y, z) \in L^{2}(\Omega \times [0, T])$ and $g$ is progressively measurable;\\
[0.2cm]
\ding{193} There exists $C > 0$ such that for each $y_{1}, y_{2} \in \mathbbm{R}$, and for each $z_{1}, z_{2} \in \mathbbm{R}^{d}$, $|g(\omega, t, y_{1}, z_{1}) - g(\omega, t, y_{2}, z_{2})| \leq C(|y_{1} - y_{2}| + \|z_{1} - z_{2}\|)$, uniformly for a.e. $(\omega. t)$, where $\|\cdot\|$ denotes the Euclidean norm on $\mathbbm{R}^{d}$;\\
[0.2cm]
Let us  consider the following 1-dimensional BSDE with terminal time $t$ and terminal condition $\eta$, defined on the interval $[0, t]$, given that $0 \leq t \leq T$ and $\eta \in L^{2}(\mathcal{F}_{t})$:
$$y_{s} = \eta + \int^{t}_{s}g(r, y_{r}, z_{r})dr - \int^{t}_{s}z_{r}dB_{r}, \; \; \; s \in [0, t].$$
\begin{definition}{(g-evaluation)}
For each $0 \leq s \leq t \leq T$ and $\eta \in L^{2}(\mathcal{F}_{t})$, we define
$$\mathcal{E}^{g}_{s, t}[\eta] \coloneqq y_{s}.$$
The family of operators $\mathcal{E}^{g}_{s, t}[\cdot]: L^{2}(\mathcal{F}_{t}) \to L^{2}(\mathcal{F}_{s}), 0 \leq s \leq t \leq T$ are called $g$-evaluation.
\end{definition}
\noindent
We recall (cf. \cite{ElKaroui}) that if the terminal time is given by a stopping time $\tau$ valued in $[0,T]$ and if $\eta$ is $\mathcal{F}_{\tau}$-measurable, the solution of the BSDE  with terminal time $\tau$, terminal condition $\eta$ and Lipschitz driver $g$ is defined as the solution of the BSDE with (deterministic) terminal time $T$, terminal condition $\eta$ and Lipschitz driver $g^{\tau}$ defined by $g^{\tau}(t, y, z) \coloneqq g(t, y, z)\mathbbm{1}_{\{t \leq \tau\}}.$
The first component of this solution at time $t$ is  equal to $\mathcal{E}^{g^{\tau}}_{t, T}(\eta)$, also denoted by $\mathcal{E}^{g}_{t, \tau}(\eta)$. We have $\mathcal{E}_{t, \tau}^{g}(\eta) = \eta$ a.s. on the set $\{t \geq \tau\}$. \\
The following result summarizes some of the well-known properties of the $g$-evaluations. 

\begin{proposition}\label{TEXtheoremM31}
Let $g$ satisfy \ding{192} and \ding{193}. Let $S, \tau, \theta$ be stopping times. Then the $g$-evaluation satisfies the following properties:\\
[0.2cm]
(A1) (Monotonicity) $\mathcal{E}^{g}_{S, \tau}[\eta] \leq \mathcal{E}^{g}_{S, \tau}[\eta']$, if $\eta \leq \eta'$;\\
[0.2cm]
(A2) (Knowledge preserving) $\mathcal{E}^{g}_{\tau, S}[\eta] = \eta$, for all $S, \tau$, such that $S \leq \tau$, for all $\eta \in L^{2}(\mathcal{F}_{S})$. \\
[0.2cm]
(A3) (Time consistency) $\mathcal{E}^{g}_{S, \theta}[\mathcal{E}^{g}_{\theta, \tau}[\eta]] = \mathcal{E}^{g}_{S, \tau}[\eta]$, for all $S \leq \theta \leq \tau$, for all  $\eta \in L^{2}(\mathcal{F}_{\tau})$;\\
[0.2cm]
(A4) ("Generalized" zero-one law) 
$\; I_A\rho_{S,\tau}[\xi(\tau)] =I_A\rho_{S,\tau'}[\xi(\tau')],$ for all $A\in\cf_S$, $\tau\in \Theta_{S}$, $\tau'\in\Theta_{S}$ such that $\tau=\tau'$ on $A$.  \\
[0.2cm]
(A5) (Continuity with respect to terminal time and terminal condition)\\
Let $(\tau_{n})_{n \in \mathbbm{N}}$ be a sequence of stopping times in $\mathcal{T}_{S, \tau}$, such that $\lim_{n \to \infty}\tau_{n} = \tau$ a.s. Let $(\eta_{n})_{n \in \mathbbm{N}}$ be a sequence of random variables, such that $\eta_{n} \in L^{2}(\mathcal{F}_{\tau_{n}})$, $\sup_{n}\eta_{n} \in L^{2}$ and $\lim_{n \to \infty}\eta_{n} = \eta$ a.s. Then, we have $\lim_{n \to \infty}\mathcal{E}^{g}_{S, \tau_{n}}[\eta_{n}] = \mathcal{E}^{g}_{S, \tau}[\eta]$ a.s.

\end{proposition}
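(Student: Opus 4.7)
The plan is to draw primarily on the classical theory of BSDEs with Lipschitz drivers (see \cite{ElKaroui}), so that most of the argument is a matter of invoking and adapting well-known facts; only property (A5) requires a slightly more delicate argument.

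Properties (A1)--(A3) are classical and I would dispose of them quickly. For (A1), the standard comparison theorem applied to the BSDE with Lipschitz driver $g^{\tau}$ gives $y_{S} \leq y'_{S}$ whenever $\eta \leq \eta'$, which is exactly the stated monotonicity. For (A2), on $\{t \geq S\}$ the driver $g^{S}$ vanishes, so the BSDE with terminal condition $\eta \in L^{2}(\mathcal{F}_{S})$ reduces to the pure martingale equation $y_{t} = \mathbb{E}[\eta \mid \mathcal{F}_{t}]$; since $\eta$ is $\mathcal{F}_{S}$-measurable and $t \geq S$, we get $y_{t} = \eta$, and evaluating at $t = \tau$ yields $\mathcal{E}^{g}_{\tau, S}[\eta] = \eta$. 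Property (A3) is the usual flow (time-consistency) property of BSDEs and is an immediate consequence of the uniqueness of the solution of the BSDE with a given Lipschitz driver and a given square-integrable terminal condition.

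For (A4), I would exploit the local property of BSDEs: for $A \in \mathcal{F}_{S}$ with $\tau = \tau'$ on $A$, the drivers $g^{\tau}$ and $g^{\tau'}$ coincide on $A$ over $[S, T]$ and, by the admissibility of $\xi$, the terminal conditions $\xi(\tau)$ and $\xi(\tau')$ agree on $A$. A standard pasting/uniqueness argument then gives $\mathbbm{1}_{A} y_{S} = \mathbbm{1}_{A} y'_{S}$, i.e. $\mathbbm{1}_{A} \mathcal{E}^{g}_{S, \tau}[\xi(\tau)] = \mathbbm{1}_{A} \mathcal{E}^{g}_{S, \tau'}[\xi(\tau')]$.

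The main obstacle is (A5). I would fix $(\tau_{n}), \tau, (\eta_{n}), \eta$ as in the statement and compare the two BSDE solutions $(y^{n}, z^{n})$ and $(y, z)$ associated with $(g^{\tau_{n}}, \eta_{n})$ and $(g^{\tau}, \eta)$ respectively, by applying It\^o's formula to $|y^{n}_{t} - y_{t}|^{2}$, invoking the Lipschitz property of $g$ and Gronwall's lemma. This yields an a-priori estimate on $\mathbb{E}[|y^{n}_{S} - y_{S}|^{2}]$ in terms of $\mathbb{E}[|\eta_{n} - \eta|^{2}]$ together with an integral of $|g(\cdot, y, z)|^{2}$ over the (vanishing) stochastic set $[\tau \wedge \tau_{n}, \tau \vee \tau_{n}]$. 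Dominated convergence, using $\sup_{n}\eta_{n} \in L^{2}$ and the square-integrability of $g(\cdot, y, z)$, gives $L^{2}$-convergence, and a subsequence converges almost surely. The delicate point is precisely that the terminal times are random, possibly unequal, and $\eta_{n}$ is only $\mathcal{F}_{\tau_{n}}$-measurable rather than $\mathcal{F}_{\tau}$-measurable, so one must carefully handle the convergence of the driver integral over the symmetric difference of the two stochastic intervals; this is exactly the type of argument developed in \cite{ElKaroui} and its descendants, which I would adapt here.
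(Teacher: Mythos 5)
Your sketches for (A1)--(A4) follow the standard BSDE arguments (comparison theorem; vanishing of the driver $g^{S}$ after $S$; uniqueness/flow property; and the localization argument obtained by applying It\^o's formula to $\mathbbm{1}_{A}|y_{t}-y'_{t}|^{2}$ on $[S,T]$, using that the drivers and the terminal data agree on $A\in\mathcal{F}_{S}$), and these are correct. Note that the paper itself does not prove the proposition: it is presented as a summary of known facts, with (A4) referred to \cite{Grigorova-2} and (A5) to \cite{Quenez-Sulem-0}, so any complete argument here is necessarily ``different'' from the paper in the trivial sense that the paper only cites.

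The genuine gap is in your treatment of (A5). Your scheme (It\^o on $|y^{n}_{t}-y_{t}|^{2}$, Lipschitz bound, Gronwall, then dominated convergence) yields $\mathbb{E}\bigl[|\mathcal{E}^{g}_{S,\tau_{n}}[\eta_{n}]-\mathcal{E}^{g}_{S,\tau}[\eta]|^{2}\bigr]\to 0$, from which you extract an almost surely convergent \emph{subsequence}; but the statement asserts almost sure convergence of the \emph{whole} sequence, and $L^{2}$-convergence together with subsequential a.s.\ convergence does not give this. The standard repair, which is the route taken in the reference the paper relies on (\cite{Quenez-Sulem-0}), is to use the $\mathcal{F}_{S}$-conditional version of the a priori estimates: a.s.,
\begin{equation*}
\bigl|\mathcal{E}^{g}_{S,\tau_{n}}[\eta_{n}]-\mathcal{E}^{g}_{S,\tau}[\eta]\bigr|^{2}
\;\leq\; C\,\mathbb{E}\Bigl[\,|\eta_{n}-\eta|^{2}+\int_{\tau_{n}\wedge\tau}^{\tau_{n}\vee\tau}|g(s,y_{s},z_{s})|^{2}\,ds\;\Big|\;\mathcal{F}_{S}\Bigr],
\end{equation*}
and then to pass to the limit \emph{inside} the conditional expectation: with $G_{n}:=\sup_{m\geq n}\bigl(|\eta_{m}-\eta|^{2}+\int_{\tau_{m}\wedge\tau}^{\tau_{m}\vee\tau}|g(s,y_{s},z_{s})|^{2}ds\bigr)$, the sequence $(G_{n})$ is non-increasing, integrable and tends to $0$ a.s., so $\mathbb{E}[G_{n}\mid\mathcal{F}_{S}]\downarrow 0$ a.s., which gives the stated a.s.\ convergence. (Two smaller points: integrability of $G_{n}$, and also your unconditional dominated-convergence step, really require a two-sided bound $\sup_{n}|\eta_{n}|\in L^{2}$ rather than just $\sup_{n}\eta_{n}\in L^{2}$ as literally written; and since $\tau_{n}\in\mathcal{T}_{S,\tau}$ the driver term only involves the interval $[\tau_{n},\tau]$.) With this conditional-estimate replacement, your plan for (A5) closes, and the rest of the proposal is the standard route.
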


\begin{remark}
For Property (A4) we refer, e.g., to \cite{Grigorova-2}. Property (A5) was proven in \cite{Quenez-Sulem-0} (in the case of jumps). In \cite{Pe04}, the additional assumption $g(\cdot, 0, 0) = 0$ is made ensuring that the $g$-evaluation satisfies the \textbf{usual} ``zero-one law'': for each $S \leq \tau$, for each $A\in \mathcal{F}_{S},$  $\mathcal{E}^{g}_{S, \tau}[\mathbbm{1}_{A}\eta] = \mathbbm{1}_{A}\mathcal{E}^{g}_{S, \tau}[\eta]$.

\end{remark}
\noindent
Moreover,  the $g$-evaluation $\mathcal{E}^{g}$ satisfies the property  \eqref{cond_on_rho} in Theorem \ref{prop_martingale_optimality_principle_general}. Indeed, we have
$\limsup_{n \to +\infty}\mathcal{E}^{g}_{S, \tau_{n}}[\phi(\tau_{n})] \leq \mathcal{E}^{g}_{S, \tau^{\star}}[\limsup_{n \to +\infty}\phi(\tau_{n})],$
for each non-decreasing sequence $\tau_{n}$ such that $\lim_{n \to +\infty}\tau_{n} = \tau^{\star}$, and for each square-integrable admissible family $\phi$, such that  $\sup_n |\phi(\tau_n)|\in L^2.$ For a proof of this property, based on property (A5) of the $g$-evaluations,  we refer to Lemma A.5 in \cite{Dumitrescu-Quenezz-Sulem}. \\
[0.2cm]
Moreover, in the Brownian framework, the first component $(y_{t})$ of the solution of the BSDE with Lipschitz driver $g$, terminal time $T$, and terminal condition $\eta\in L^2(\cf_T)$,  has continuous trajectories (in $t$). Hence, for any non-decreasing sequence $(\tau_{n})$, such that $\lim_{n \to +\infty}\tau_{n} = T$, 
$$\lim_{n \to +\infty}\mathcal{E}^{g}_{\tau_{n}, T}(\eta)=\lim_{n \to +\infty} y_{\tau_{n}} =  y_{T} = \mathcal{E}^{g}_{T, T}(\eta) = \eta.$$
Thus,  property \eqref{Eq_property rho} from Lemma \ref{TEXlemmaA2929} and Proposition 
\ref{Prop_final}  is satisfied.\\
Hence, the $g$-evaluation satisfies all the properties of the non-linear operators $\rho$ used in our results. \\
 It remains for us to argue that the integrability Assumption \ref{Hyp3} (with $p=2$) on the value family $V$ is satisfied, under some suitable assumptions on $\xi$. The following remark is an application of Remark \ref{TEXRemaRk24} to the framework of a complete financial market model with \textit{imperfections encoded in the driver} $g$ of the dynamics of self-financing portfolios.

\begin{remark}
Let us place ourselves in a complete financial market model with possible imperfections (such as e.g. trading constraints, different interest rates for borrowing and lending, different repo rates, etc.). Let $g$ be the driver from the dynamics of self-financing portfolios in this market. If the family $\xi=(\xi(\tau))_{\tau \in \Theta}$ is assumed to be square-integrable and super-replicable by a self-financing portfolio with wealth $X(\tau)$ at time $\tau$, then, the family $(X(\tau))_{\tau \in \Theta}$ is a $(\Theta, \mathcal{E}^{g})$-martingale. By Remark \ref{TEXRemaRk24} the value $V = (V(\tau))_{\tau \in \Theta}$ is square-integrable (that is, satisfies Assumption \ref{Hyp3} with $p=2$).

\end{remark}

\subsubsection{Peng's $g$-expectation}
Peng's $g$-expectation is a particular case of the previous example, introduced in \cite{Peng-2}. In this case, the driver $g$ is assumed to satisfy the conditions \ding{192}, \ding{193}, and condition \ding{194}' ($g(\cdot, y, 0) \equiv 0$, for all $y \in \mathbbm{R}$). In this particular case, the non-linear operators do not depend on the second index, but on the first index only. Moreover, they satisfy the usual zero-one law. \\
More precisely, let 
 $g$ satisfy conditions \ding{192}, \ding{193} and \ding{194}'. The operators $\mathcal{E}_{g}[\cdot]$ and $\mathcal{E}_{g}[\cdot|\mathcal{F}_{S}]$ are defined by
$\mathcal{E}_{g}[\cdot] \coloneqq \mathcal{E}^{g}_{0, \tau}[\cdot], \; \; \mathcal{E}_{g}[\cdot|\mathcal{F}_{S}] \coloneqq \mathcal{E}^{g}_{S, \tau}[\cdot].$
The family of non-linear operators $(\mathcal{E}_{g}[\cdot|\mathcal{F}_{S}])$, indexed by the stopping times $S$, is called $g$-expectation.

 The $g$-expectation $(\mathcal{E}_{g}[\cdot|\mathcal{F}_{S}])$ satisfies all the properties of the $g$-evaluation and additionally the following property:\\
 (\textbf{usual} zero-one law) For  stopping times $S$, $\tau$ such that  $S \leq \tau$, for  $A\in \mathcal{F}_{S}$, $\mathcal{E}_{g}[\mathbbm{1}_{A}\eta|\mathcal{F}_{S}] = \mathbbm{1}_{A}\mathcal{E}_{g}[\eta|\mathcal{F}_{S}]$.

\subsection{Bayraktar - Yao's non-linear expectations}

Let $T > 0$ and let $(\Omega, \mathcal{F}, \mathbbm{P})$ be a complete probability space endowed with a filtration $(\mathcal{F}_{t})_{t \in [0, T]}$, satisfying the usual conditions.\\
 A non-linear expectation, called $F$-expectation, depending on one time index only,  was introduced in the work by \cite{Bayraktar}, and  optimal stopping problems in continuous time with $F$-expectations were studied in \cite{Bayraktar-3}.
For simplicity of the exposition, we will consider the case where the domain of the $F$-expectation is the whole space $L^\infty$. The non-linear operators ($F$-expectation) in \cite{Bayraktar} are defined first for deterministic times, then extended to stopping times valued in a finite deterministic grid, then, extended  to general stopping times valued in $[0,T].$ We will not repeat the construction here, but will recall the basic properties only (cf.  \cite{Bayraktar} for explanations and details; in particular, cf. Propositions 2.7, 2.8, and 2.9 therein\footnote{Note that in the case of the present exposition, $Dom^{\#}(\mathcal{E})$ of  \cite{Bayraktar} is equal to $L^\infty$.}).  

We recall that, an $F$-expectation,  is a family of operators $\mathcal{E}:=(\mathcal{E}[\cdot|\mathcal{F}_{S}])$, such that $\mathcal{E}[\cdot|\mathcal{F}_{S}]: L^{\infty}(\Omega, \mathcal{F}_{T}, \mathbbm{P}) \to L^{\infty}(\Omega, \mathcal{F}_{S}, \mathbbm{P})$, satisfying the following properties, for all  $S$,$\tau$ stopping times valued in $[0,T]$:\\
[0.2cm]
(C1) (Monotonicity and Positively strict monotonicity) $\mathcal{E}[\eta|\mathcal{F}_{S}] \leq \mathcal{E}[\eta'|\mathcal{F}_{S}]$, if $\eta \leq \eta'$. Moreover, if $0 \leq \eta \leq \eta'$  and $\mathcal{E}[\eta|\mathcal{F}_{0}] = \mathcal{E}[\eta'|\mathcal{F}_{0}]$, then $\eta = \eta'$;\\
[0.2cm]
(C2) (Consistency) $\mathcal{E}[\mathcal{E}[\eta|\mathcal{F}_{\tau}]|\mathcal{F}_{S}] = \mathcal{E}[\eta|\mathcal{F}_{S}]$, for any stopping times $S,\tau$ such  that $0 \leq S \leq \tau \leq T.$\\
[0.2cm]
(C3) (Usual zero-one law) $\mathcal{E}[\mathbbm{1}_{A}\eta|\mathcal{F}_{S}] = \mathbbm{1}_{A}\mathcal{E}[\eta|\mathcal{F}_{S}]$, for any $A \in \mathcal{F}_{S}$.\\
[0.2cm]
(C4) (Translation invariance) $\mathcal{E}[ \eta+X|\mathcal{F}_{S}] = \mathcal{E}[\eta|\mathcal{F}_{S}] + X$, if $X \in L^{\infty}(\mathcal{F}_{S})$.\\
[0.2cm]
(C5) (Knowledge preservation) 
$\mathcal{E}[\eta|\mathcal{F}_{S}] = \eta$, if $\eta \in L^{\infty}(\mathcal{F}_{S})$.\\
[0.2cm]
(C6) (Local property) $\mathcal{E}[\eta \mathbbm{1}_{A} + \eta' \mathbbm{1}_{A^{c}}|\mathcal{F}_{S}] = \mathcal{E}[\eta\mathbbm{1}_{A}|\mathcal{F}_{S}] + \mathcal{E}[\eta'\mathbbm{1}_{A^{c}}|\mathcal{F}_{S}]$, for any $A \in \mathcal{F}_{S}$ and for any $\eta, \xi \in L^{\infty}(\mathcal{F}_{T}).$\\
[0.2cm]
(C7) (Fatou property)
Let $(\eta_n)$ be a sequence in $L^\infty$,  satisfying  $\inf_n \eta_n \geq c$, a.s. for some constant $c\in\R$, and such that  $\lim_{n \to \infty}\eta_{n} = \eta$, where $\eta\in L^\infty.$  Then, 
$\mathcal{E}[\eta|\mathcal{F}_{S}]\leq \liminf_{n\to+\infty} \mathcal{E}[\eta_n|\mathcal{F}_{S}]. $\\
[0.2cm]
(C8) (Dominated convergence) 
Let $(\eta_{n})$ be a sequence, such that $\inf_n \eta_n \geq c$, a.s. for some constant $c\in\R$, and such that $\lim_{n \to \infty}\eta_{n} = \eta$. If there exists  $\bar \eta \in L^{\infty}$ such that $\eta_{n} \leq \bar \eta$  for all $n \in \mathbbm{N}$, then the limit $\eta \in L^{\infty}$, and 
$\lim_{n \to \infty}\mathcal{E}[\eta_{n}|\mathcal{F}_{S}] = \mathcal{E}[\eta|\mathcal{F}_{S}].$\\

Note that the knowledge preserving property is called constant preserving in \cite{Bayraktar}.
\begin{remark}
The $F$-expectation satisfies property \eqref{cond_on_rho} of Theorem \ref{prop_martingale_optimality_principle_general}. 
Indeed, let $(\tau_{n})$ be a non-decreasing sequence of stopping times, such that $\lim_{n \to +\infty} \uparrow \tau_{n} = \tau$. Let $\phi$ be an  admissible $L^{\infty}$-integrable family such that $\sup_{n\in\N} |\phi(\tau_{n})|\in L^{\infty}$. As the $F$-expectation does not depend on the second index, to show property \eqref{cond_on_rho}, we need to show 
$$\limsup_{n \to +\infty} \mathcal{E}[\phi(\tau_{n})|\mathcal{F}_{S}] \leq \mathcal{E}[\limsup_{n \to +\infty}\phi(\tau_{n})|\mathcal{F}_{S}].$$
For each $n \in \mathbbm{N}$, $\phi(\tau_{n}) \leq \sup_{p \geq n}\phi(\tau_{p}).$
Hence, by monotonicity of $\mathcal{E}[\cdot|\mathcal{F}_{S}]$, we have
$$\mathcal{E}[\phi(\tau_{n})|\mathcal{F}_{S}] \leq \mathcal{E}[\sup_{p \geq n}\phi(\tau_{p})|\mathcal{F}_{S}].$$
The sequence $\eta_{n}$ defined by $\eta_{n} = \sup_{p \geq n}\phi(\tau_{p})$ is non-increasing and tends to $\eta \coloneqq \limsup_{n \to +\infty}\phi(\tau_{n})$. Moreover, $\sup_{p \in \mathbbm{N}}|\phi(\tau_{p})| \in L^{\infty}$.   Hence, by property (C8), we have 
$\lim_{n \to +\infty}\mathcal{E}[\eta_{n}|\mathcal{F}_{S}] = \mathcal{E}[\lim_{n \to +\infty}\eta_{n}|\mathcal{F}_{S}] = \mathcal{E}[\eta|\mathcal{F}_{S}].$
Hence,
$$\limsup_{n \to +\infty}\mathcal{E}[\phi(\tau_{n})|\mathcal{F}_{S}] \leq \lim_{n \to +\infty}\mathcal{E}[\eta_{n}|\mathcal{F}_{S}] = \mathcal{E}[\eta|\mathcal{F}_{S}] = \mathcal{E}[\limsup_{n \to +\infty}\phi(\tau_{n})|\mathcal{F}_{S}],$$
which is the desired property.

\end{remark}
\noindent
The assumptions imposed on the pay-off in Bayraktar - Yao (\cite{Bayraktar} and \cite{Bayraktar-3}) ensure that the value in their case belongs to the domain of the operator. 
Hence, if we consider the Bermudan style version of their problem, Assumption \ref{Hyp3} on $V$ will also be satisfied.

\subsection{Dynamic Concave Utilities}
The dynamic concave utilities are among the examples of non-linear operators depending on two time indices. \\
In this example the space is $L^{\infty}$ (that is $p = +\infty$).\\
We place ourselves again in the Brownian framework. A representation result, with an explicit form for the penalty term, for dynamic concave utilities was established in \cite{Delbaen}. The optimal stopping problem with dynamic concave utilities was studied by Bayraktar, Karatzas and Yao in \cite{Bayraktar-2}, where the authors rely on the representation result from \cite{Delbaen}.\footnote{In \cite{Bayraktar-2} the authors choose a different sign convention on $\rho$, and hence, study a minimization optimal stopping problem with dynamic convex risk measures.}\\ 
We recall the following definition from \cite{Delbaen}.  

\begin{definition}{(Dynamic concave utility)}
For  $S, \tau \in \mathcal{T}_{0, T}$, such that $S \leq \tau$, let  
$$\{u_{S, \tau}(\cdot): L^{\infty}(\mathcal{F}_{\tau}) \to L^{\infty}(\mathcal{F}_{S}) \}$$
be  a family of operators.
This family is called a dynamic concave utility, if it satisfies the following properties:\\
[0.2cm]
(D1) (Monotonicity) $u_{S, \tau}(\eta) \leq u_{S, \tau}(\eta')$, if $\eta \leq \eta'$;\\
[0.2cm]
(D2) (Translation invariance) $u_{S, \tau}(\eta + X) = u_{S, \tau}(\eta) + X$, if $\eta \in L^{\infty}(\mathcal{F}_{\tau})$ and $X \in L^{\infty}(\mathcal{F}_{S})$;\\
[0.2cm]
(D3) (Concavity) $u_{S, \tau}(\lambda \eta + (1 - \lambda) \eta') \geq \lambda u_{S, \tau}(\eta) + (1 - \lambda)u_{S, \tau}(\eta')$, for any $\lambda \in [0, 1]$ and $\eta, \eta' \in L^{\infty}(\mathcal{F}_{\tau})$;\\
[0.2cm]
(D4) (Normalisation) $u_{S, \tau}(0) = 0$.

\end{definition}
\noindent
Moreover, in \cite{Delbaen} and \cite{Bayraktar-2} the following properties on the dynamic concave utilities are assumed:\\
[0.2cm]
(D5) (Time consistency) for any stopping time $\sigma \in \mathcal{T}_{S, \tau}$, we have $u_{S, \sigma}(u_{\sigma, \tau}(\eta)) = u_{S, \tau}(\eta)$;\\
[0.2cm]
(D6) (Continuity from above) for any non-increasing sequence $(\eta_{n}) \subset L^{\infty}(\mathcal{F}_{\tau})$ with $\eta = \lim_{n \to \infty} \downarrow \eta_{n} \in L^{\infty}(\mathcal{F}_{\tau})$, we have $\lim_{n \to \infty} \downarrow u_{S, \tau}(\eta_{n}) = u_{S, \tau}(\eta)$\\
[0.2cm]
(D7) (Local property) $u_{S, \tau}(\eta \mathbbm{1}_{A} + \xi \mathbbm{1}_{A^{c}}) = u_{S, \tau}(\eta)\mathbbm{1}_{A} + u_{S, \tau}(\xi)\mathbbm{1}_{A^{c}}$, for any $A \in \mathcal{F}_{S}$ and for any $\eta, \xi \in L^{\infty}(\mathcal{F}_{\tau})$;\\
[0.2cm]
(D8) $E_{P}[\eta|\mathcal{F}_{t}] \geq 0$ for any $\eta \in L^{\infty}(\mathcal{F}_{T})$, such that $u_{t, T}(\eta) \geq 0$.

\begin{remark}
Note that the assumptions in (D6) imply that, for each $n \in \mathbbm{N}$, $\eta_{0} \leq \eta_{n} \leq \eta$, where $\eta_{0} \in L^{\infty}$ and $\eta \in L^{\infty}$. Hence, $\sup_{n} \eta_{n} \in L^{\infty}$ in (D6).  

\end{remark}
By the results of \cite{Delbaen}, any functional $\rho$ satisfying properties (D1) - (D8), has the following representation:
\begin{equation} \label{TEXequation1}
\begin{aligned}
u_{S, \tau}(\eta) &=\essinf_{Q: Q\sim P,Q=P \text{ on } \cf_S  } \Big\{E_{Q}[\eta|\mathcal{F}_{S}]+c_{S,\tau}(Q)  \Big\}\\
&= \essinf_{Q \in \mathcal{Q}_{S}} E_{Q}[\eta + \int^{\tau}_{S}f(u, \psi_{u}^{Q})du|\mathcal{F}_{S}],
\end{aligned}
\end{equation}
where the function $f$ is  such that $f(\cdot,\cdot,x)$ is predictable for any $x$; $f$ is a proper, convex function in the space variable $x$, and valued in $[0,+\infty]$, and the process $(\psi_{t}^{Q})$ is the process from the Doleans-Dade exponential representation for the density process $(Z^{Q}_{t})$, where
$Z_{t}^{Q} = \frac{dQ}{dP}|_{\mathcal{F}_{t}},$
and 
$$\mathcal{Q}_{S} = \{Q: Q \sim P, \psi^{Q}_{t}(\omega) = 0, dt \otimes dP \text{ a.e. on } [[0, S[[, \; E_{Q}[\int^{T}_{S}f(s, \psi^{Q}_{s})ds] < +\infty \}.$$
\noindent
\begin{remark}
A close inspection of the proof of the duality result in (\cite{Biagini} and \cite{Delbaen}) reveals that the dynamic concave utilities depend on the second index only via their penalty term.
\end{remark}
It has been noted in \cite{{Delbaen}} that property (D8) is equivalent to 
\begin{equation}\label{Eq_D8}
c_{t,T}(P)=0, \text{ for all } t\in[0,T].
\end{equation}

\noindent
The dynamic concave utilities $u_{S, \tau}$ do not enter directly into the framework of the present paper, as they are defined only for $S, \tau$ such that $S \leq \tau$ a.s. (cf. \cite{Delbaen} and \cite{Bayraktar-2}). There is, however, a ``natural'' extension of $u_{S, \tau}$ in view of the representation property \eqref{TEXequation1}. This extension is as follows:\\
[0.2cm]
For $S$ and $\tau$ stopping times, and $\eta \in L^{\infty}(\mathcal{F}_{\tau})$, we define
\begin{equation} \label{TexEquatiON1}
\mathbbm{1}_{\{S > \tau\}}u_{S, \tau}(\eta) = \mathbbm{1}_{\{S > \tau\}} \times \eta.
\end{equation}
\begin{remark} 
By properties (D4) and (D7) of the dynamic concave utilities, we get that the ``generalized zero-one law'' is satisfied. 
Indeed, let $A \in \mathcal{F}_{S}$, and let $\tau, \tau' \in \Theta_{S}$ be such that $\tau = \tau'$ on $A$. Let $\eta \in L^{\infty}(\mathcal{F}_{\tau})$. By applying property (D7) with $\xi = 0$, we get
$u_{S, \tau}(\mathbbm{1}_A\eta) = \mathbbm{1}_{A}u_{S, \tau}(\eta) + \mathbbm{1}_{A^{c}}u_{S, \tau}(0).$
As $u_{S, \tau}(0) = 0$ due to the normalisation property (D4), we obtain
$u_{S, \tau}(\mathbbm{1}_A\eta) = \mathbbm{1}_{A}u_{S, \tau}(\eta).$
Hence, the ``generalized zero-one law'' is satisfied.

\end{remark}

\begin{remark}
Property translation invariance (D2)  and property normalisation (D4)  imply that $u_{S, \tau}(\cdot)$ satisfies the knowledge preserving property   (iii) of $\rho$. Indeed, for any $\mathcal{F}_{S}$-measurable $\eta$, we have, by (D2) and (D4), 
$u_{S, \tau}(\eta) = u_{S, \tau}(0) + \eta = 0 + \eta = \eta,$
which shows property (iii).

\end{remark}

\begin{remark}
The dynamic concave utilities satisfy   property  \eqref{cond_on_rho} in Theorem \ref{prop_martingale_optimality_principle_general}.
Indeed, let $(\tau_n)$ be a non-decreasing sequence of stopping times   such that  $\tau_n\uparrow \tau$, and let $\phi$ be an $L^\infty$-integrable admissible family such that  $\sup_n |\phi(\tau_n)|\in L^\infty$. Then, for each $n\in\N$,  
\begin{equation}\label{Eq_DCU1}
\phi(\tau_n)+\int^{\tau_n}_{S}f(u, \psi_{u}^{Q})du\leq
\sup_{p\geq n}\phi(\tau_p)+\int^{\tau}_{S}f(u, \psi_{u}^{Q})du=:\eta_n,
\end{equation}
where, for the inequality,  we have used that $f$ is valued in $[0,+\infty]$, and $\tau_n\leq \tau.$ As 
$\essinf_{Q \in \mathcal{Q}_{S}} E_{Q}[\cdot|\cf_S]$ is non-decreasing, we get: for each $n\in\N$, 
\begin{equation}\label{Eq_DCU3}
\essinf_{Q \in \mathcal{Q}_{S}} E_{Q}[\phi(\tau_n)+\int^{\tau_n}_{S}f(u, \psi_{u}^{Q})du|\cf_S]\leq 
\essinf_{Q \in \mathcal{Q}_{S}} E_{Q}[\eta_n|\cf_S]. 
\end{equation}
We have $\eta_n\downarrow \eta$, where $\eta:=\limsup_{n\to+\infty}\phi(\tau_n)+\int^{\tau}_{S}f(u, \psi_{u}^{Q})du. $ \\As 
$\essinf_{Q \in \mathcal{Q}_{S}} E_{Q}[\cdot|\cf_S]$ is continuous from above, we deduce
\begin{equation}\label{Eq_DCU2}
\lim_{n\to+\infty}\essinf_{Q \in \mathcal{Q}_{S}} E_{Q}[\eta_n|\cf_S]=\essinf_{Q \in \mathcal{Q}_{S}} E_{Q}[\eta|\cf_S].
\end{equation}
Hence, from Eqs.  \eqref{Eq_DCU3} and \eqref{Eq_DCU2},  we get 
$$\limsup_{n\to+\infty}\essinf_{Q \in \mathcal{Q}_{S}} E_{Q}[\phi(\tau_n)+\int^{\tau_n}_{S}f(u, \psi_{u}^{Q})du|\cf_S]\leq  \lim_{n\to+\infty}\essinf_{Q \in \mathcal{Q}_{S}} E_{Q}[\eta_n|\cf_S]=\essinf_{Q \in \mathcal{Q}_{S}} E_{Q}[\eta|\cf_S],$$
which, by the representation result \eqref{TEXequation1}, gives
$$\limsup_{n\to+\infty}u_{S,\tau_n}(\phi(\tau_n))\leq u_{S,\tau_n}(\limsup_{n\to+\infty}\phi(\tau_n))$$.  This  is the desired property \eqref{cond_on_rho}. 
\end{remark}

\begin{remark} The dynamic concave utilities satisfy property \eqref{Eq_property rho} from Lemma \ref{TEXlemmaA2929} and Proposition 
\ref{Prop_final}. Indeed, for $\tau_n\uparrow T$, 
\begin{equation}
\begin{aligned}
u_{\tau_n,T}(\eta)&=\essinf_{Q: Q\sim P,Q=P \text{ on } \cf_{{\tau_n}}} \Big\{E_{Q}[\eta|\mathcal{F}_{\tau_n}]+c_{\tau_n,T}(Q)  \Big\}\leq E_{P}[\eta|\mathcal{F}_{\tau_n}]+c_{\tau_n,T}(P)\\
&=E_{P}[\eta|\mathcal{F}_{\tau_n}],  \\ 
\end{aligned}
\end{equation}
where we have used that for each $n\in\N$, $c_{\tau_n,T}(P)=0$ by Eq.\eqref{Eq_D8}. Hence,  $u_{\tau_n,T}(\eta)\leq E_{P}[\eta|\mathcal{F}_{\tau_n}].$ The sequence $( E_{P}[\eta|\mathcal{F}_{\tau_n}])$ being a uniformly integrable $P$-martingale, with terminal value $E_{P}[\eta|\mathcal{F}_{T}]=\eta$, we get
$$ \limsup_{n\to\infty} u_{\tau_n,T}(\eta)\leq \lim_{n\to\infty} E_{P}[\eta|\mathcal{F}_{\tau_n}]=\eta,$$
which is the desired property. 
\end{remark}
\noindent

\noindent
To finish, the pay-off process $(\xi_{t})$ in \cite{Bayraktar-2} is assumed to be bounded. Hence, by the monotonicity and the knowledge preservation  of $u$, if we consider the Bermudan-style version of the problem studied in \cite{Bayraktar-2}, then the value $V$ satisfies the integrability Assumption \ref{Hyp3} (that is, for each $S \in \Theta, V(S) \in L^{\infty}$).

\section[Appendix]{Appendix: The case of a finite number of pre-described stopping times}

In this appendix, we treat the particular case where  $(\theta_k)_{k\in\N_0}$ is constant from a certain term, \textbf{\textit{independent of }$\omega$}, onwards. More precisely, we place ourselves in the situation where there exists $n\in\N*$ (independent of $\omega$) such that for each $m\geq n$, $\theta_m=T$ 
\begin{theorem}\label{thm_martingale_arretee} 
Let $\phi=(\phi(\tau), \, \tau \in \Theta)$  be a $p$-integrable admissible family. Under the assumptions of knowledge preservation (iii) and ``generalized zero-one law'' (vi) on the non-linear operators, if $\phi$
satisfies
\begin{equation}\label{thm_martingale_arretee_eq_1}
\,\, \rho_{\theta_k,\theta_{k+1}}[\phi (\theta_{k+1})]\leq  \phi( \theta_k) ({\rm resp.}=\phi( \theta_k) ),  \text{ for all }k, 
\end{equation} 
then,  for all $\tau\in\Theta$, we have
 \begin{equation}\label{eq22_prop_optional_sampling}
\rho_{\theta_k,\theta_{k+1}\wedge\tau}[\phi(\theta_{k+1}\wedge\tau)]\leq \phi(\theta_k\wedge\tau) ({\rm resp.}=\phi( \theta_k\wedge\tau) ).
\end{equation}

\end{theorem}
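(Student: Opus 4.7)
My plan is to prove the claim piecewise on a natural partition of $\Omega$ determined by the relative position of $\tau$ and $\theta_k$, using only properties (ii), (iii), (vi) of $\rho$, the admissibility of $\phi$, and the hypothesis. I first define $C_1 := \{\tau \leq \theta_k\}$ and $C_2 := \{\tau > \theta_k\}$, both of which lie in $\mathcal{F}_{\theta_k}$ by standard stopping-time arguments. Since $(\theta_j)$ is non-decreasing and $\tau \in \Theta$ takes values in the discrete grid $\{\theta_0,\dots,\theta_n=T\}$, on $C_2$ the strict inequality $\tau>\theta_k$ forces $\tau \geq \theta_{k+1}$. Consequently, $\theta_{k+1}\wedge\tau = \theta_k\wedge\tau = \tau$ on $C_1$, while $\theta_{k+1}\wedge\tau = \theta_{k+1}$ and $\theta_k\wedge\tau = \theta_k$ on $C_2$. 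Both $\theta_k\wedge\tau$ and $\theta_{k+1}\wedge\tau$ lie in $\Theta$ by closure under pairwise minimum, so the left-hand side of \eqref{eq22_prop_optional_sampling} is well-defined.

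On $C_1$, I would first apply the ``generalized zero-one law'' (vi) with $S=\theta_k$ and $A=C_1$ to replace the second index $\theta_{k+1}\wedge\tau$ by $\theta_k\wedge\tau$ (both coincide with $\tau$ on $C_1$); this yields $\mathbbm{1}_{C_1}\,\rho_{\theta_k,\theta_{k+1}\wedge\tau}[\phi(\theta_{k+1}\wedge\tau)] = \mathbbm{1}_{C_1}\,\rho_{\theta_k,\theta_k\wedge\tau}[\phi(\theta_k\wedge\tau)]$. Knowledge preservation (iii) then collapses the operator, since $\theta_k \geq \theta_k\wedge\tau$, so the right-hand side equals $\mathbbm{1}_{C_1}\,\phi(\theta_k\wedge\tau)$. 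This is exactly the manoeuvre already used in Lemma \ref{TEXlemma3939}, Statement 1.

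On $C_2$, I would apply (vi) a second time, now with $A=C_2$, to obtain $\mathbbm{1}_{C_2}\,\rho_{\theta_k,\theta_{k+1}\wedge\tau}[\phi(\theta_{k+1}\wedge\tau)] = \mathbbm{1}_{C_2}\,\rho_{\theta_k,\theta_{k+1}}[\phi(\theta_{k+1})]$. The hypothesis \eqref{thm_martingale_arretee_eq_1} then bounds (respectively equates) this by $\mathbbm{1}_{C_2}\,\phi(\theta_k)$, and admissibility of the family $\phi$ identifies $\mathbbm{1}_{C_2}\,\phi(\theta_k)$ with $\mathbbm{1}_{C_2}\,\phi(\theta_k\wedge\tau)$ because $\theta_k\wedge\tau=\theta_k$ on $C_2$. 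Summing the $C_1$ and $C_2$ contributions then delivers \eqref{eq22_prop_optional_sampling} in both the supermartingale and the martingale case.

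The main subtlety is that (vi) as literally written requires $\tau,\tau'\in\Theta_S$, whereas the stopping times $\theta_k\wedge\tau$ and $\theta_{k+1}\wedge\tau$ can be strictly smaller than $\theta_k$ on $C_1$. This is, however, precisely the broader reading of (vi) implicitly adopted in the proof of Lemma \ref{TEXlemma3939}, so the step is internally consistent with the main text; alternatively, one can argue on $C_1$ by first using admissibility (ii) of $\rho$ to move the first index from $\theta_k$ to $\theta_k\wedge\tau$ on the set $\{\theta_k=\theta_k\wedge\tau\}$, reducing the $C_1$ argument to a direct application of (iii). Apart from this interpretive point, the proof is a routine piecewise assembly and needs no induction, which is why the finite-grid setting simplifies matters.
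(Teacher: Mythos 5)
Your proof is correct and follows essentially the same route as the paper's: you split on $\{\tau\leq\theta_k\}$ and $\{\tau>\theta_k\}$, use the ``generalized zero-one law'' (vi) together with knowledge preservation (iii) on the first set, and the hypothesis \eqref{thm_martingale_arretee_eq_1} together with admissibility of $\phi$ on the second, exactly as in the paper. The interpretive point you raise about (vi) being applied with second indices such as $\theta_k\wedge\tau$ that need not lie in $\Theta_{\theta_k}$ is present in the paper's own argument as well (and in Lemma \ref{TEXlemma3939}), so your reading is consistent with the text.
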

\begin{proof}
Let $k\in\N_0$.  
We have
\begin{equation}\label{eq_decomposition_prop_optional_sampling}
\rho_{\theta_k,\theta_{k+1}\wedge\tau}[\phi(\theta_{k+1}\wedge\tau)]=\I_{\{\tau\leq \theta_k\}}\rho_{\theta_k,\theta_{k+1}\wedge\tau}[\phi(\theta_{k+1}\wedge\tau)]+ \I_{\{\tau> \theta_{k}\}}\rho_{\theta_k,\theta_{k+1}\wedge\tau}[\phi(\theta_{k+1}\wedge\tau)].
\end{equation}
We note that on the set $\{\tau\leq \theta_k\}$, $\theta_{k+1}\wedge \tau = \theta_{k} \wedge \tau$. Hence, by the ``generalized zero-one law'', we have
$$\I_{\{\tau\leq \theta_k\}}\rho_{\theta_k,\theta_{k+1}\wedge\tau}[\phi(\theta_{k+1}\wedge\tau)] = \I_{\{\tau\leq \theta_k\}}\rho_{\theta_k,\theta_{k}\wedge\tau}[\phi(\theta_{k}\wedge\tau)].$$
As $\theta_{k} \wedge \tau \leq \theta_{k}$, by property (iii) of the non-linear evaluation $\rho$, we get
$$\rho_{\theta_k,\theta_{k}\wedge\tau}[\phi(\theta_{k}\wedge\tau)] = \phi(\theta_{k}\wedge\tau).$$
Hence, we have
\begin{equation}\label{eq_first_term_prop_optional_sampling}
\I_{\{\tau\leq \theta_k\}}\rho_{\theta_k,\theta_{k+1}\wedge\tau}[\phi(\theta_{k+1}\wedge\tau)] = \I_{\{\tau\leq \theta_k\}}\phi(\theta_{k}\wedge\tau).
\end{equation}
For the second term on the right-hand side of Equation \eqref{eq_decomposition_prop_optional_sampling}, we note that 
$\tau\wedge\theta_{k+1}=\theta_{k+1}$ on $\{\tau>\theta_k\}$. Hence, by the ``generalized zero-one law'' of the non-linear evaluation $\rho$, we have
\begin{equation*}
\begin{aligned}
\I_{\{\tau>\theta_k\}}\rho_{\theta_k,\theta_{k+1}\wedge\tau}[\phi(\theta_{k+1}\wedge\tau)]
&=\I_{\{\tau> \theta_k\}}\rho_{\theta_k,\theta_{k+1}}[\phi(\theta_{k+1})].
\end{aligned}
\end{equation*}
This, together with  Equation \eqref{thm_martingale_arretee_eq_1} on $\phi$ and the admissibility of $\phi$, gives 
\begin{equation}\label{eq_final_supp_prop_optional_sampling}
\I_{\{\tau> \theta_k\}}\rho_{\theta_k,\theta_{k+1}}[\phi(\theta_{k+1})]\leq \I_{\{\tau>\theta_k\}}\phi(\theta_{k})=\I_{\{\tau>\theta_k\}}\phi(\theta_{k}\wedge\tau).
\end{equation} 
By plugging in  \eqref{eq_first_term_prop_optional_sampling} and \eqref{eq_final_supp_prop_optional_sampling} in Equation \eqref{eq_decomposition_prop_optional_sampling},  we get 
$$ \rho_{\theta_k,\theta_{k+1}\wedge\tau}[\phi(\theta_{k+1}\wedge\tau)]\leq \I_{\{\tau\leq\theta_k\}}\phi(\theta_{k}\wedge\tau) +
\I_{\{\tau>\theta_k\}}\phi(\theta_{k}\wedge\tau) =\phi(\theta_{k}\wedge\tau).$$
This ends the proof. 
\end{proof}
We establish a characterization of $(\Theta,\rho)$-supermartingale (resp. $(\Theta,\rho)$-martingale)  families in the particular case where the sequence $(\theta_k)_{k\in\N_0}$ is constant from a certain term, \textit{independent of }$\omega$, onwards.  
\begin{proposition} \label{cor_optional_sampling} 
Assume that there exists $n\in\N_0$ such that $\theta_k=\theta_{n}=T$ a.s., for all $k\geq n$.  
Let $\phi=(\phi(\tau), \, \tau \in \Theta)$ be  a p-integrable admissible family. Under the assumptions of admissibility (ii), knowledge preservation (iii), monotonicity (iv), consistency (v) and ``generalized" zero-one law (vi) on the non-linear operators, if $  \,\, \rho_{\theta_k,\theta_{k+1}}[\phi (\theta_{k+1})]\leq  \phi( \theta_k) ({\rm resp.}=\phi( \theta_k) ),$ for all $k$,
 then, $\phi$
 is a $(\Theta, g)$-supermartingale \emph{(resp. $(\Theta, g)$--martingale)}  family.

\end{proposition}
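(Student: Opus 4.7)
The plan is to reduce the general $(\Theta,\rho)$-super\-martingale inequality $\rho_{\sigma,\tau}[\phi(\tau)]\leq \phi(\sigma)$ (for $\sigma,\tau\in\Theta$ with $\sigma\leq\tau$) to the one-step hypothesis, exploiting the finiteness of the grid (there exists $n$ with $\theta_n = T$) to run a backward induction on the grid. I will write the proof for the supermartingale case; the martingale case is identical with equalities replacing inequalities throughout (Theorem \ref{thm_martingale_arretee} gives the corresponding equality, and the monotonicity step becomes automatic).

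\textbf{Reduction to one-step.} I would first argue that it suffices to establish the key intermediate claim: for every $k\in\{0,\ldots,n\}$ and every $\tau\in\Theta_{\theta_k}$,
$\rho_{\theta_k,\tau}[\phi(\tau)]\leq \phi(\theta_k)$ a.s. Granted this, let $\sigma\in\Theta$ have canonical decomposition $\sigma=\sum_{l=0}^{n}\theta_l\I_{B_l}+T\I_{\bar B}$ (from Remark \ref{Rk_canonical}); since $\sigma\leq\tau$, each $B_l\in\cf_{\theta_l}\cap\cf_\tau$. Define $\tau^{(l)}:=\tau\I_{B_l}+T\I_{B_l^c}$, which lies in $\Theta_{\theta_l}$. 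On $B_l$, admissibility (ii) of $\rho$ gives $\I_{B_l}\rho_{\sigma,\tau}[\phi(\tau)] = \I_{B_l}\rho_{\theta_l,\tau}[\phi(\tau)]$, and the ``generalized zero-one law'' (vi) (with $A=B_l\in\cf_{\theta_l}$ and $\tau=\tau^{(l)}$ on $B_l$) rewrites this as $\I_{B_l}\rho_{\theta_l,\tau^{(l)}}[\phi(\tau^{(l)})]$, which by the intermediate claim is dominated by $\I_{B_l}\phi(\theta_l)=\I_{B_l}\phi(\sigma)$ (using admissibility of $\phi$). On $\bar B$ we have $\sigma=\tau=T$ and knowledge preservation (iii) closes the case. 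Summing yields $\rho_{\sigma,\tau}[\phi(\tau)]\leq \phi(\sigma)$.

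\textbf{Backward induction for the intermediate claim.} The base case $k=n$ is immediate: any $\tau\in\Theta_{\theta_n}$ equals $T$ a.s., so (iii) gives $\rho_{\theta_n,\tau}[\phi(\tau)]=\phi(T)=\phi(\theta_n)$. For the inductive step, assume the claim at $k+1$ and fix $\tau\in\Theta_{\theta_k}$. Set $A:=\{\tau\geq\theta_{k+1}\}\in\cf_{\theta_{k+1}}$ (standard stopping-time fact), and $\tau':=\tau\I_A+T\I_{A^c}\in\Theta_{\theta_{k+1}}$. My first sub-goal would be to prove
\begin{equation}\label{eq:planintermediate}
\rho_{\theta_{k+1}\wedge\tau,\,\tau}[\phi(\tau)]\leq \phi(\theta_{k+1}\wedge\tau)\quad \text{a.s.}
\end{equation}
Split: on $A$, admissibility (ii) identifies the left-hand side with $\rho_{\theta_{k+1},\tau}[\phi(\tau)]$, then (vi) replaces $\tau$ by $\tau'$, and the induction hypothesis gives $\rho_{\theta_{k+1},\tau'}[\phi(\tau')]\leq \phi(\theta_{k+1})=\phi(\theta_{k+1}\wedge\tau)$ (the last equality by admissibility of $\phi$). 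On $A^c$, $\theta_{k+1}\wedge\tau=\tau$, so admissibility (ii) and knowledge preservation (iii) give $\rho_{\theta_{k+1}\wedge\tau,\tau}[\phi(\tau)]=\rho_{\tau,\tau}[\phi(\tau)]=\phi(\tau)=\phi(\theta_{k+1}\wedge\tau)$. Combining with consistency (v) and monotonicity (iv) of $\rho_{\theta_k,\theta_{k+1}\wedge\tau}$,
\[
\rho_{\theta_k,\tau}[\phi(\tau)] \;=\; \rho_{\theta_k,\theta_{k+1}\wedge\tau}\bigl[\rho_{\theta_{k+1}\wedge\tau,\tau}[\phi(\tau)]\bigr] \;\leq\; \rho_{\theta_k,\theta_{k+1}\wedge\tau}[\phi(\theta_{k+1}\wedge\tau)].
\]
Finally Theorem \ref{thm_martingale_arretee}, applied to $\phi$ with the stopping time $\tau$, bounds the right-hand side by $\phi(\theta_k\wedge\tau)=\phi(\theta_k)$, completing the induction.

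\textbf{Main obstacle.} The delicate point is the inductive step: on the set $\{\tau<\theta_{k+1}\}$ the induction hypothesis does \emph{not} apply directly, since it requires $\tau\geq\theta_{k+1}$. This is precisely what the concatenation $\tau'=\tau\I_A+T\I_{A^c}$ and the ``generalized zero-one law'' (vi) are designed to overcome—this single step is where essentially all of the hypotheses (ii)--(vi) on $\rho$ must be deployed together. Once \eqref{eq:planintermediate} is established, the chaining via consistency and the stopped inequality from Theorem \ref{thm_martingale_arretee} is routine, and the reduction to general $\sigma$ is a direct application of (ii) and (vi).
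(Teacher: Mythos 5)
Your proof is correct, and it follows the same overall strategy as the paper's: a backward induction along the finite grid, with the stopped one-step result (Theorem \ref{thm_martingale_arretee}) invoked at exactly the same point, and the same deployment of properties (ii)--(vi). The difference is in how the induction is organised. The paper inducts directly on the \emph{stopped} inequality $\rho_{\theta_k\wedge\tau,\tau}[\phi(\tau)]\leq\phi(\theta_k\wedge\tau)$ for an arbitrary fixed $\tau\in\Theta$ (its property \eqref{eq1_prop_optional_sampling}); this makes the passage to a general $\sigma\leq\tau$ a one-line use of admissibility (ii), namely $\I_{A_k}\rho_{\sigma\wedge\tau,\tau}[\phi(\tau)]=\I_{A_k}\rho_{\theta_k\wedge\tau,\tau}[\phi(\tau)]$, with no concatenation needed, and the case split $\{\tau\geq\theta_k\}$ versus $\{\tau<\theta_k\}$ occurs only once, at the final estimate (Eqs. \eqref{eq_number_74}--\eqref{eq_number_75}). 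You instead induct on the \emph{unstopped} inequality $\rho_{\theta_k,\tau}[\phi(\tau)]\leq\phi(\theta_k)$ over $\tau\in\Theta_{\theta_k}$, which is arguably the more natural statement but forces the concatenation device twice: once with $\tau'=\tau\I_A+T\I_{A^c}$ inside the inductive step, to make the hypothesis applicable on $\{\tau\geq\theta_{k+1}\}$ (your intermediate inequality there is precisely the paper's stopped inequality at rank $k+1$), and once more with $\tau^{(l)}=\tau\I_{B_l}+T\I_{B_l^c}$ in the reduction to a general $\sigma$. Both routes work; the paper's choice buys a shorter reduction, yours makes more explicit where the ``generalized zero-one law'' carries the load. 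One cosmetic point: when you apply (vi) with $S=\theta_l$ (or $S=\theta_{k+1}$), the stopping time $\tau$ need not belong to $\Theta_S$ off the set $A$, which the literal statement of (vi) requires; this is harmless --- apply (vi) first at $S=\sigma$ (resp. $S=\theta_{k+1}\wedge\tau$) and only then change the first index by (ii) --- and the paper itself uses (vi) with the same liberty in the proof of Theorem \ref{thm_martingale_arretee}.
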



\begin{proof}
We prove the result for the case of a $(\Theta,\rho)$-supermartingale family; the case of a $(\Theta,\rho)$-martingale family  can be treated similarly.
Let $\sigma, \tau$ in $\Theta$ be such that $\sigma\leq\tau$ a.s. 
As $\sigma\in\Theta$, we have  $\sigma=\sum_{k=0}^{n} \theta_k {\bf 1}_{A_k},$ where $(A_k)_{k\in\{0,\ldots, n\}}$ is a partition  of $\Omega$ such that $ A_k\in\cf_{\theta_k}$.    
We notice that in order to prove  $\rho_{\sigma,\tau}[\phi_{\tau}]\leq \phi_{\sigma}$, it is sufficient to prove the following property:
\begin{equation}\label{eq1_prop_optional_sampling}
\rho_{\theta_k\wedge\tau,\tau}[\phi(\tau)]\leq \phi(\theta_k\wedge\tau), \text{ for all } k\in\{0,1,\ldots, n\}.
\end{equation}
Indeed, this property proven, we will have
\begin{equation*}
\begin{aligned}
\rho_{\sigma,\tau}[\phi(\tau)]&=\rho_{\sigma\wedge\tau,\tau}[\phi(\tau)]=\sum_{k=0}^n \I_{A_k}\rho_{\theta_k\wedge\tau,\tau}[\phi(\tau)]\leq \sum_{k=0}^n \I_{A_k}\phi(\theta_k\wedge\tau)=\phi(\sigma\wedge\tau)=\phi(\sigma),
\end{aligned}
\end{equation*} 
where we have used the admissibility of $\rho$ to show the second equality. This will conclude the proof.
Let us now prove property \eqref{eq1_prop_optional_sampling}. We proceed by backward induction. 
For $k=n$, we have (recall that $\theta_n=T$)
$$\rho_{\theta_n\wedge\tau,\tau} [\phi(\tau)]=\rho_{T\wedge \tau,\tau}[\phi(\tau)]=\rho_{\tau,\tau}[\phi(\tau)]=\phi(\tau)=\phi(T\wedge\tau),$$
where we have used property (iii) to obtain the last but one equality.\\
[0.2cm]
We suppose that the property \eqref{eq1_prop_optional_sampling} holds true for $k+1.$
Then, by using this induction hypothesis, the time-consistency and  the monotonicity of the non-linear operators, 
we get
$$\rho_{\theta_k\wedge\tau,\tau}[\phi(\tau)]=\rho_{\theta_k\wedge\tau,\theta_{k+1}\wedge\tau}[\rho_{\theta_{k+1}\wedge\tau, \tau}[\phi(\tau)]]
\leq \rho_{\theta_k\wedge\tau,\theta_{k+1}\wedge\tau}[\phi(\theta_{k+1}\wedge\tau)].
$$
In order to conclude, it remains to prove 
\begin{equation}\label{eq2_prop_optional_sampling}
\rho_{\theta_k\wedge\tau,\theta_{k+1}\wedge\tau}[\phi(\theta_{k+1}\wedge\tau)]\leq \phi(\theta_k\wedge\tau).
\end{equation}
By Theorem \ref{thm_martingale_arretee}, we have 
\begin{equation}\label{eq_number_74}
  \I_{\{\tau\geq \theta_k\}}\rho_{\theta_k\wedge \tau,\theta_{k+1}\wedge\tau}[\phi(\theta_{k+1}\wedge\tau)]=\I_{\{\tau\geq \theta_k\}}\rho_{\theta_k,\theta_{k+1}\wedge\tau}[\phi(\theta_{k+1}\wedge\tau)]\leq 
\I_{\{\tau\geq \theta_k\}}\phi(\theta_k\wedge\tau).
\end{equation}
On the other hand, by the ``generalized zero-one law'', (applied with $A = \{\tau < \theta_{k}\}$ on which set, we have $\theta_{k+1} \wedge \tau = \tau$) we have
\begin{equation}\label{eq_number_75}
\I_{\{\tau<\theta_k\}}\rho_{ \theta_k\wedge\tau,\theta_{k+1}\wedge\tau}[\phi(\theta_{k+1}\wedge\tau)] = \I_{\{\tau<\theta_k\}}\rho_{ \tau, \tau}[\phi(\theta_{k+1}\wedge\tau)] = \phi(\tau)\I_{\{\tau<\theta_k\}},
\end{equation}
where we have used property (iii) on $\rho$ for the last equality.\\
[0.2cm]
Combining  Equations \eqref{eq_number_74} and \eqref{eq_number_75}, we deduce  \eqref{eq2_prop_optional_sampling}. 
The proposition is thus proved.
\end{proof}


\subsection{Dynamic programming principle in the case of finite number of pre-described stopping times}
We first introduce an explicit construction, by backward induction, of what will turn out to be the $(\Theta, \rho)$-Snell envelope of the pay-off family $\xi$, in this particular case of finite number of pre-described stopping times. Let $\xi$ be, as before, a $p$-integrable admissible family. \\
[0.2cm]
Let us  define the sequence of random variables $(U(\theta_{k}))_{k \in \{0, 1, ..., n\}}$ by backward induction as follows:
\begin{equation} \label{defSnell}
\left \{ \begin{array}{ll}
   U(\theta_{n}) := \xi(\theta_{n}), & \mbox{$k = n$};\\
   U(\theta_{k}) := \max (\xi(\theta_{k}); \rho_{\theta_{k}, \theta_{k+1}}[U(\theta_{k+1})]), & \mbox{for $k \in \{0, 1, ..., n-1\}$}. \end{array} \right. 
\end{equation}
From \eqref{defSnell} we see, by backward induction, that for each $k \in \{0, 1, ..., n\}$, $U(\theta_{k})$ is a well-defined real-valued random variable, which is $\mathcal{F}_{\theta_{k}}$-measurable and $p$-integrable. From \eqref{defSnell}, we also have $U(\theta_k)\geq \xi({\theta_k})$, for all $k\in \{0, 1, ..., n\}$.\\ Moreover, it can be shown that, for each $k \in \{0, 1, ..., n-1\}$,  $U(\theta_{k}) = U(\theta_{k+1})$ a.s. on $\{\theta_{k} = \theta_{k+1} \}$. Indeed, due to the second property (admissibility) and to the third property (knowledge preservation) of the non-linear operators $\rho$, we have
\begin{align*}
U(\theta_{k}) \mathbbm{1}_{\{\theta_{k} = \theta_{k+1}\}}& = \max(\xi(\theta_{k})\mathbbm{1}_{\{\theta_{k} = \theta_{k+1}\}}; \rho_{\theta_{k}, \theta_{k+1}}[U(\theta_{k+1})]\mathbbm{1}_{\{\theta_{k} = \theta_{k+1}\}})\\
& = \max(\xi(\theta_{k+1})\mathbbm{1}_{\{\theta_{k} = \theta_{k+1}\}}; \rho_{\theta_{k+1}, \theta_{k+1}}[U(\theta_{k+1})]\mathbbm{1}_{\{\theta_{k} = \theta_{k+1}\}}) \\
&= \max(\xi(\theta_{k+1})\mathbbm{1}_{\{\theta_{k} = \theta_{k+1}\}}; U(\theta_{k+1})\mathbbm{1}_{\{\theta_{k} = \theta_{k+1}\}}) = U(\theta_{k+1})\mathbbm{1}_{\{\theta_{k} = \theta_{k+1}\}},
\end{align*}
where, for the last equality, we have used that   $U(\theta_{k+1}) \geq \xi(\theta_{k+1})$.\\
[0.2cm]
Hence, 
$ U(\theta_{k}) = U(\theta_{k+1})$ a.s. on  $\{\theta_{k} = \theta_{k+1}\}.$
We can thus ``extend'' $U$ to the whole set $\Theta$ as follows. Let $\tau \in \Theta$. There exists a partition $(A_{k})_{k \in \{0, 1, ..., n\}}$ such that, for each $k$,  $A_{k} \in \mathcal{F}_{\theta_{k}}$, and such that $\tau = \sum^{n}_{k = 0} \theta_{k}\mathbbm{1}_{A_{k}}$. We set 
\begin{equation} \label{TEXequation3232}
U(\tau) := \sum^{n}_{k=0} U(\theta_{k}) \mathbbm{1}_{A_{k}}. 
\end{equation}
We show the following result:
\begin{theorem}\label{Thm_finite}
Under the assumptions of admissibility (ii), knowledge preservation  (iii), monotonicity (iv), consistency (v) and ``generalized zero-one law'' (vi) on the non-linear operators, the family $U := (U(\tau), \tau \in \Theta)$  defined by \eqref{defSnell} and \eqref{TEXequation3232} coincides with the $(\Theta, \rho)$-Snell envelope family of $\xi$.

\end{theorem}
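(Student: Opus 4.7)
The strategy is the standard ``smallest supermartingale dominating $\xi$'' argument, specialized to the finite grid situation, with the key gain being that Proposition \ref{cor_optional_sampling} lets us reduce the supermartingale property to a one-step check. I would proceed in four steps.

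First, I would verify that the extension $U=(U(\tau),\tau\in\Theta)$ defined by \eqref{TEXequation3232} is a well-defined, admissible, $p$-integrable family. Well-definedness on $\Theta$ uses precisely the pre-established fact that $U(\theta_k)=U(\theta_{k+1})$ on $\{\theta_k=\theta_{k+1}\}$, so that the value $U(\tau)$ does not depend on the choice of partition $(A_k)$ representing $\tau$. Admissibility in the sense of Definition \ref{def.admi} then follows because, for two representations of $\tau$ and $\tau'$ with $\tau=\tau'$ on a common set, the contributions on each atom $A_k\cap A'_j\cap\{\theta_k=\theta_j\}$ coincide by the same pairing property. The $p$-integrability is immediate from the backward induction \eqref{defSnell}, which preserves $L^p$ at each step (the operators $\rho_{\theta_k,\theta_{k+1}}$ take $L^p(\cf_{\theta_{k+1}})$ into $L^p(\cf_{\theta_k})$), and the fact that only finitely many $\theta_k$'s appear. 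By construction $U(\theta_k)\geq \xi(\theta_k)$ for all $k$, and extending to $\tau\in\Theta$ via admissibility of both $U$ and $\xi$ gives $U\geq\xi$.

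Second, I would prove that $U$ is a $(\Theta,\rho)$-supermartingale family. By the backward construction \eqref{defSnell} we have
\begin{equation*}
\rho_{\theta_k,\theta_{k+1}}[U(\theta_{k+1})]\leq \max\bigl(\xi(\theta_k),\rho_{\theta_k,\theta_{k+1}}[U(\theta_{k+1})]\bigr)=U(\theta_k)\quad\text{a.s.},
\end{equation*}
for each $k\in\{0,\dots,n-1\}$, and trivially for $k\geq n$ by knowledge preservation. This is precisely the hypothesis of Proposition \ref{cor_optional_sampling}, which, under properties (ii)--(vi), upgrades this one-step supermartingale inequality to the full $(\Theta,\rho)$-supermartingale property for $U$ along arbitrary stopping times in $\Theta$.

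Third, I would show minimality. Let $V'=(V'(\tau))$ be any $p$-integrable admissible $(\Theta,\rho)$-supermartingale family satisfying $V'\geq \xi$. I claim $V'(\theta_k)\geq U(\theta_k)$ for every $k\in\{0,\dots,n\}$, which I prove by backward induction. For $k=n$, $V'(\theta_n)\geq \xi(\theta_n)=U(\theta_n)$. Assuming $V'(\theta_{k+1})\geq U(\theta_{k+1})$, the monotonicity (iv) of $\rho$ gives $\rho_{\theta_k,\theta_{k+1}}[V'(\theta_{k+1})]\geq \rho_{\theta_k,\theta_{k+1}}[U(\theta_{k+1})]$, while the supermartingale property of $V'$ yields $V'(\theta_k)\geq \rho_{\theta_k,\theta_{k+1}}[V'(\theta_{k+1})]$; combined with $V'(\theta_k)\geq \xi(\theta_k)$ and \eqref{defSnell} this gives $V'(\theta_k)\geq U(\theta_k)$. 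Passing from the grid to general $\tau\in\Theta$ using the admissibility of both families concludes $V'\geq U$.

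I do not anticipate a hard step here: the assumption that the grid terminates at a deterministic $n$ makes the backward induction finite and entirely bypasses the delicate left-upper-semicontinuity / Fatou conditions needed in Theorem \ref{Thm_Snell}. The only point requiring a little care is the well-definedness of the extension \eqref{TEXequation3232} on $\Theta$ and checking admissibility from it; everything else is a direct application of the already-established Proposition \ref{cor_optional_sampling} plus a two-line backward induction.
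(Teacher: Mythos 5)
Your proposal is correct and follows essentially the same route as the paper: the domination and $(\Theta,\rho)$-supermartingale property of $U$ via the one-step inequality and Proposition \ref{cor_optional_sampling} (the paper's Lemma \ref{Lemma_finite}), followed by the finite backward induction for minimality using monotonicity (iv) and the supermartingale property of the competitor family. The only cosmetic difference is that you fold the well-definedness and admissibility of the extension \eqref{TEXequation3232} into the proof, whereas the paper establishes these points in the text preceding the theorem.
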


For this, we first show an easy lemma, based on Proposition \ref{cor_optional_sampling}  and on the definition of the family $U$.

\begin{lemma}\label{Lemma_finite}
Under the assumptions of admissibility (ii), knowledge preservation (iii), monotonicity (iv), consistency (v) and ``generalized zero-one law'' (vi) on the non-linear operators, the family $U$ defined by \eqref{defSnell} and \eqref{TEXequation3232}
 is a $(\Theta,\rho)$-supermartingale family, dominating the family $\xi$. 

\end{lemma}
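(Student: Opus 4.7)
The plan is to verify the hypotheses of Proposition \ref{cor_optional_sampling} for the family $U$, which then immediately gives the $(\Theta,\rho)$-supermartingale property. The work splits into four tasks: (a) $p$-integrability of $U$, (b) admissibility of $U$ in the sense of Definition \ref{def.admi}, (c) domination $U\geq \xi$, and (d) the one-step inequality $\rho_{\theta_k,\theta_{k+1}}[U(\theta_{k+1})]\leq U(\theta_k)$.

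First I would handle $p$-integrability by backward induction. At the terminal step $k=n$, $U(\theta_n)=\xi(T)\in L^p$ by assumption on $\xi$. Assuming $U(\theta_{k+1})\in L^p(\mathcal{F}_{\theta_{k+1}})$, property (i) gives $\rho_{\theta_k,\theta_{k+1}}[U(\theta_{k+1})]\in L^p(\mathcal{F}_{\theta_k})$, so the maximum with $\xi(\theta_k)\in L^p$ is again in $L^p$. The extension \eqref{TEXequation3232} preserves this since $|U(\tau)|\leq \sum_{k=0}^{n}|U(\theta_k)|$.

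Next I would tackle admissibility, which I expect to be the main obstacle. Fix a representation $\tau=\sum_{k=0}^n \theta_k\mathbbm{1}_{A_k}$ with $A_k\in\mathcal{F}_{\theta_k}$. The $\mathcal{F}_\tau$-measurability of $U(\tau)$ follows from the standard fact that $A_k\subseteq\{\tau=\theta_k\}$ combined with $A_k\in\mathcal{F}_{\theta_k}$ implies $A_k\in\mathcal{F}_\tau$; similarly for the level sets of $U(\theta_k)$. The subtler point is to check that the definition \eqref{TEXequation3232} is independent of the chosen representation and that $U(\tau)=U(\tau')$ on $\{\tau=\tau'\}$. Both reduce to the identity $U(\theta_k)=U(\theta_l)$ on $\{\theta_k=\theta_l\}$, which in turn follows by iterating (for $k<l$) the observation already established just after \eqref{TEXequation3232} in the text, namely $U(\theta_j)=U(\theta_{j+1})$ on $\{\theta_j=\theta_{j+1}\}$ for every $j$.

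For domination, the definition \eqref{defSnell} gives $U(\theta_k)\geq \xi(\theta_k)$ directly (the maximum dominates each entry, and equality holds at $k=n$). By the admissibility of $\xi$, $\xi(\tau)=\sum_k \xi(\theta_k)\mathbbm{1}_{A_k}\leq \sum_k U(\theta_k)\mathbbm{1}_{A_k}=U(\tau)$. Finally, the one-step inequality $\rho_{\theta_k,\theta_{k+1}}[U(\theta_{k+1})]\leq U(\theta_k)$ is built into \eqref{defSnell} (the right-hand side appears as one of the arguments of the $\max$). All the hypotheses of Proposition \ref{cor_optional_sampling} are then in place, and the conclusion that $U$ is a $(\Theta,\rho)$-supermartingale family dominating $\xi$ follows.
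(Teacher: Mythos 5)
Your proposal is correct and follows essentially the same route as the paper: the paper's proof also deduces domination from the definition \eqref{defSnell}--\eqref{TEXequation3232} and obtains the $(\Theta,\rho)$-supermartingale property by applying Proposition \ref{cor_optional_sampling}, the one-step inequality being built into the $\max$ in \eqref{defSnell}. The only difference is that the $p$-integrability, $\mathcal{F}_{\theta_k}$-measurability and admissibility of $U$ (via $U(\theta_k)=U(\theta_{k+1})$ on $\{\theta_k=\theta_{k+1}\}$), which you verify inside your proof, are established in the paper in the text surrounding the definition of $U$ rather than in the proof of the lemma itself.
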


\begin{proof}
We already noticed, following the definition of $U(\theta_{k})$, that, for each $k$,  $U(\theta_k)\geq  \xi(\theta_k)$. Hence, for each $\tau\in \Theta$, $U(\tau)\geq \xi(\tau) $ a.s. (by definition of $U(\tau)$, cf. \eqref{TEXequation3232}).
The $(\Theta,\rho)$-supermartingale property of $U$ follows from the definition of $(U(\theta_{k}))_{k \in \{0, ..., n\}}$ and from  Proposition \ref{cor_optional_sampling}. 
\end{proof}
The following proof of Theorem \ref{Thm_finite} is a combination of Lemma \ref{Lemma_finite} and of  a proof of the minimality property of $U$. 
\begin{proof}[Proof of Theorem \ref{Thm_finite}]
By  Lemma \ref{Lemma_finite},   $U$ 
 is a $(\Theta,\rho)$-supermartingale family, dominating the family $\xi$. It remains to show that it is the minimal one.\\ 
Let  $\hat{U}$ be (another) $(\Theta, \rho)$-supermartingale family, such that $\hat{U}(\theta_{k}) \geq \xi(\theta_{k})$, for each $k \in \{0, 1, ..., n\}$.\\
At the terminal time  $\theta_n$, we have 
$\hat{U}(\theta_{n}) \geq \xi(\theta_{n}) = U(\theta_{n}).$\\
Let $k\in\{1,...,n\}. $ Suppose, by backward induction,  that $\hat{U}(\theta_{k}) \geq U(\theta_{k})$. We  need to show that $\hat{U}(\theta_{k-1}) \geq U(\theta_{k-1})$.\\
By the backward induction hypothesis and by the monotonicity of the non-linear operators $\rho_{\theta_{k-1}, \theta_{k}}$, we have 
$\rho_{\theta_{k-1}, \theta_{k}}[\hat{U}(\theta_{k})] \geq \rho_{\theta_{k-1}, \theta_{k}}[U(\theta_{k})].$
This, together with the definition of  $U(\theta_{k-1})$, gives  
$$U(\theta_{k-1}) = \max (\xi(\theta_{k-1}); \rho_{\theta_{k-1}, \theta_{k}}[U(\theta_{k})]) \leq \max (\xi(\theta_{k-1}); \rho_{\theta_{k-1}, \theta_{k}}[\hat{U}(\theta_{k})]).$$
Since $\hat{U}$ is a $(\Theta, \rho)$-supermartingale family, we have 
$\hat{U}(\theta_{k-1}) \geq \rho_{\theta_{k-1}, \theta_{k}}[\hat{U}(\theta_{k})].$
Hence, 
$$U(\theta_{k-1}) \leq \max(\xi(\theta_{k-1}); \rho_{\theta_{k-1}, \theta_{k}}[\hat{U}(\theta_{k})]) \leq \max(\xi(\theta_{k-1}); \hat{U}(\theta_{k-1})) = \hat{U}(\theta_{k-1}).$$
The reasoning by backward induction is thus finished and the minimality property of $U$ shown. We conclude that the family $U$ is equal to the smallest $(\Theta, \rho)$-supermartingale family dominating the family $\xi$, that is, to the $(\Theta, \rho)$-Snell envelope of the family $\xi$.
\end{proof}

\subsection{Optimal stopping times in the case of finite number of pre-described stopping times}
We define $\bar{\nu}_{k}$ by:
$$\bar{\nu}_{k} \coloneqq \essinf \mathcal{\bar{A}}_{k}, \; \; \; \text{where $\mathcal{\bar{A}}_{k} \coloneqq \{\tau \in \Theta_{\theta_{k}}: U(\tau) = \xi(\tau) \text{ a.s.} \}$.}$$
In the case of finite number of pre-described stopping times, we have:
\begin{align}\label{eq_35}
\bar{\nu}_{k} = \inf \{\theta_{l} \in \{\theta_{k}, ..., \theta_{n}\}: U(\theta_{l}) = \xi(\theta_{l})\}
= \min \{\theta_{l} \in \{\theta_{k}, ..., \theta_{n}\}: U(\theta_{l}) = \xi(\theta_{l})\}. 
\end{align}
Indeed, as the set $\{\theta_{l} \in \{\theta_{k}, ..., \theta_{n}\}: U(\theta_{l}) = \xi(\theta_{l})\}$ is a subset of $\mathcal{\bar{A}}_{k}$, we have:
\begin{align*}
\bar{\nu}_{k} = \essinf \{\tau \in \Theta_{\theta_{k}}: U(\tau) = \xi(\tau) \text{ a.s.} \} &\leq \essinf \{\theta_{l} \in \{\theta_{k}, ..., \theta_{n}\}: U(\theta_{l}) = \xi(\theta_{l}) \}\\
&= \inf \{\theta_{l} \in \{\theta_{k}, ..., \theta_{n}\}: U(\theta_{l}) = \xi(\theta_{l})\}.
\end{align*}
Let us now show the converse inequality:
$\bar{\nu}_{k} \geq \inf \{\theta_{l} \in \{\theta_{k}, ..., \theta_{n}\}: U(\theta_{l}) = \xi(\theta_{l}) \text{ a.s.}\}.$
As the set $\mathcal{\bar{A}}_{k}$ is stable by pairwise minimization, there exists a sequence $(\tau^{(m)})_{m \in \mathbbm{N}}$, such that: $(\tau^{(m)})$ is non-decreasing; for each $m$, $\tau^{(m)} \in \mathcal{\bar{A}}_{k}$; and $\lim_{m \to +\infty} \tau^{(m)} =\bar{\nu}_{k}$.\\
Let $\omega \in \Omega$ be given. For each $m \in \mathbbm{N}$, $\tau^{(m)}(\omega) \in \{\theta_{k}(\omega), \theta_{k+1}(\omega), ..., \theta_{n}(\omega) \}$, and moreover, $U(\tau^{(m)})(\omega) = \xi(\tau^{(m)})(\omega)$.\\
As all the elements of the sequence $(\tau^{(m)}(\omega))_{m \in \mathbbm{N}}$ are valued in $\{\theta_{k}(\omega), \theta_{k+1}(\omega), ..., \theta_{n}(\omega) \}$,  we have,
$\lim_{m \to +\infty} \tau^{(m)}(\omega) \in \{\theta_{k}(\omega), \theta_{k+1}(\omega), ..., \theta_{n}(\omega) \},$
which implies that from a certain rank onwards, the sequence is constant.\\ Moreover, we have
$U(\lim_{m \to +\infty} \tau^{(m)})(\omega) = \xi(\lim_{m \to +\infty} \tau^{(m)})(\omega).$
Thus, we have
$$\bar{\nu}_{k}(\omega) = \lim_{m \to +\infty} \tau^{(m)}(\omega) \in \{\theta_{k}(\omega), \theta_{k+1}(\omega), ..., \theta_{n}(\omega) \}, \text{ and } U(\bar\nu_{k})(\omega) = \xi(\bar\nu_{k})(\omega).$$
Hence, 
$\bar{\nu}_{k} \geq \essinf \{\theta_{l} \in \{\theta_{k}, ..., \theta_{n}\}: U(\theta_{l}) = \xi(\theta_{l}) \text{ a.s.}\}.$
As both inequalities hold true,  we conclude
\begin{equation}\label{TEXEQuaTioN383838}
\bar{\nu}_{k}= \inf \{\theta_{l} \in \{\theta_{k}, ..., \theta_{n}\}: U(\theta_{l}) = \xi(\theta_{l}) \text{ a.s.}\}.
\end{equation}

\begin{lemma} \label{TEXleMMa4242}
Under the assumptions of knowledge preservation (iii), consistency (v) and ``generalized zero-one law'' (vi) on $\rho$, the family $U=(U(\tau))$ is a $(\Theta, \rho)$-martingale on $[\theta_{k}, \bar{\nu}_{k}]$.

\end{lemma}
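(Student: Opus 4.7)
The plan is to combine the Dynamic Programming structure of $U$ (which is built into its definition \eqref{defSnell}) with the two ``stopped $(\Theta,\rho)$-martingale'' consequences of the DPP from Lemma \ref{TEXlemma3939}, and then apply the martingale characterisation Lemma \ref{Lemma2525}. Note first that, by construction, $U$ satisfies the DPP and is admissible and $p$-integrable, and that the stopping time $\bar\nu_k$ is exactly the $\tilde\nu_k$ associated (via \eqref{Eq_nu_tilde}) to the family $\phi=U$. Hence Lemma \ref{TEXlemma3939}, Statement 2, applies and gives, for every $l\in\{k,k+1,\ldots,n-1\}$,
\begin{equation}\label{eq_plan_step1}
U(\theta_{l}\wedge\bar\nu_k)=\rho_{\theta_{l}\wedge\bar\nu_k,\,\theta_{l+1}\wedge\bar\nu_k}\bigl[U(\theta_{l+1}\wedge\bar\nu_k)\bigr].
\end{equation}

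Next, I would iterate \eqref{eq_plan_step1} via the consistency property (v): applying consistency $(n-l)$ times successively shows that for every $l\in\{k,\ldots,n\}$,
$$U(\theta_{l}\wedge\bar\nu_k)=\rho_{\theta_{l}\wedge\bar\nu_k,\,\theta_{n}\wedge\bar\nu_k}\bigl[U(\theta_{n}\wedge\bar\nu_k)\bigr].$$
Since $\theta_n=T\geq \bar\nu_k$, we have $\theta_n\wedge\bar\nu_k=\bar\nu_k$, so this simplifies to
\begin{equation}\label{eq_plan_step2}
U(\theta_{l}\wedge\bar\nu_k)=\rho_{\theta_{l}\wedge\bar\nu_k,\,\bar\nu_k}\bigl[U(\bar\nu_k)\bigr],\qquad l\in\{k,\ldots,n\}.
\end{equation}

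Now let $\sigma\in\Theta$ with $\theta_k\leq \sigma\leq \bar\nu_k$. Since we are in the finite-grid case, $\sigma$ can be written canonically as $\sigma=\sum_{l=k}^{n}\theta_{l}\mathbbm{1}_{A_l}$ with $A_l\in\cf_{\theta_l}$ forming a partition, and then $\sigma\wedge\bar\nu_k=\sigma$. Using the admissibility of $U$ together with the generalised zero-one law (vi) and the admissibility (ii) of $\rho$, I would collapse \eqref{eq_plan_step2} over the partition $(A_l)$ to obtain
\begin{equation}\label{eq_plan_step3}
U(\sigma)=\sum_{l=k}^{n}\mathbbm{1}_{A_l}U(\theta_{l}\wedge\bar\nu_k)=\sum_{l=k}^{n}\mathbbm{1}_{A_l}\rho_{\theta_l\wedge\bar\nu_k,\,\bar\nu_k}\bigl[U(\bar\nu_k)\bigr]=\rho_{\sigma,\bar\nu_k}\bigl[U(\bar\nu_k)\bigr].
\end{equation}

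With \eqref{eq_plan_step3} proved for every $\sigma\in\Theta$ with $\theta_k\leq\sigma\leq\bar\nu_k$, Lemma \ref{Lemma2525} (applied with $\phi=U$, $S=\theta_k$, $\tau=\bar\nu_k$) delivers the $(\Theta,\rho)$-martingale property of $U$ on the stochastic interval $[\theta_k,\bar\nu_k]$. The main obstacle I foresee is a careful bookkeeping in the final ``pasting'' step \eqref{eq_plan_step3}: one must verify that the generalised zero-one law can be used on each piece $A_l$ to replace the nonlinear operator $\rho_{\theta_l\wedge\bar\nu_k,\bar\nu_k}[\cdot]$ by $\rho_{\sigma,\bar\nu_k}[\cdot]$ there (since $\sigma=\theta_l\wedge\bar\nu_k$ on $A_l$), and that $A_l\in\cf_{\theta_l\wedge\bar\nu_k}\subset\cf_\sigma$ so that the indicator can be pulled out of the $\rho$-operator. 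Everything else is a routine bookkeeping that reduces to previously established lemmas.
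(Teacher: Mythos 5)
Your proof is correct and follows essentially the same route as the paper's: both rest on Lemma \ref{TEXlemma3939} (Statement 2) applied to $\phi=U$ with $\tilde\nu_k=\bar\nu_k$, iterate the one-step stopped identity via consistency (v) (you telescope forward to $\theta_n\wedge\bar\nu_k=\bar\nu_k$, the paper runs a backward induction from rank $n$ using knowledge preservation (iii)), and conclude with Lemma \ref{Lemma2525} after pasting over the partition $(A_l)$ of $\sigma$. One small caveat on your side remark: the pasting step needs only admissibility (ii) in the first time index (multiply the identity at $\theta_l\wedge\bar\nu_k$ by $\mathbbm{1}_{A_l}$ and replace the first index by $\sigma$ on $A_l$, exactly as the paper does implicitly), not the inclusion $\cf_{\theta_l\wedge\bar\nu_k}\subset\cf_\sigma$, which can fail in general.
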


\begin{proof}
As $U$ satisfies the DPP (which it does by definition of $U$), we have, by Lemma \ref{TEXlemma3939}, for each $l \in \mathbbm{N}$,
\begin{equation}\label{TEXEQuatioNN5353}
U(\theta_{l} \wedge \bar{\nu}_{k}) = \rho_{\theta_{l} \wedge \bar{\nu}_{k}, \theta_{l+1} \wedge \bar{\nu}_{k}}[U(\theta_{l+1} \wedge \bar{\nu}_{k})].
\end{equation}
By Lemma \ref{Lemma2525}, to show that $U$ is a $(\Theta, \rho)$-martingale on $[\theta_{k}, \bar{\nu}_{k}]$, it is sufficient  to show that for any $\sigma$, such that $\theta_{k} \leq \sigma \leq \bar{\nu}_{k}$, it holds 
$U(\sigma) = \rho_{\sigma, \bar{\nu}_{k}}[U(\bar{\nu}_{k})].$
Let $\sigma \in \Theta_{\theta_{k}}$, such that $\sigma \leq \bar{\nu}_{k}$. Then, $\sigma = \sum_{i = k}^{n}\theta_{i}\mathbbm{1}_{A_{i}}$ and $\sigma \leq \bar{\nu}_{k}$. Thus, it is sufficient to show that for $i \in \{k, ..., n\}$, such that $\theta_{i} \leq \bar{\nu}_{k}$, it holds 
$\mathbbm{1}_{A_{i}}\rho_{\theta_{i}, \bar{\nu}_{k}}[U(\bar{\nu}_{k})] = \mathbbm{1}_{A_{i}}U(\theta_{i}),$
which is the same as, for each $i \in \{k, ..., n\}$, such that $\theta_{i} \leq \bar{\nu}_{k},$
$$\mathbbm{1}_{A_{i}}\rho_{\theta_{i}\wedge \bar{\nu}_{k}, \bar{\nu}_{k}}[U(\bar{\nu}_{k})] = \mathbbm{1}_{A_{i}}U(\theta_{i} \wedge \bar{\nu}_{k}).$$
We proceed by backward induction.
At rank $n$, we have
$$\rho_{\theta_{n}\wedge \bar{\nu}_{k}, \bar{\nu}_{k}}[U(\bar{\nu}_{k})] = \rho_{\bar{\nu}_{k}, \bar{\nu}_{k}}[U(\bar{\nu}_{k})] = U(\bar{\nu}_{k}) = U(\theta_{n}\wedge\bar{\nu}_{k}),$$
where we have used that $\bar{\nu}_{k} \leq \theta_{n} = T$, and the knowledge preserving property of $\rho$. \\
We suppose, by backward induction, that the property holds true at rank $i+1$. We show it at rank $i$.  By the consistency property and the backward induction hypothesis, we have
$$\rho_{\theta_{i}\wedge \bar{\nu}_{k}, \bar{\nu}_{k}}[U(\bar{\nu}_{k})] = \rho_{\theta_{i}\wedge \bar{\nu}_{k}, \theta_{i+1}\wedge\bar{\nu}_{k}}[\rho_{\theta_{i+1}\wedge\bar{\nu}_{k}, \bar{\nu}_{k}}[U(\bar{\nu}_{k})]] = \rho_{\theta_{i}\wedge \bar{\nu}_{k}, \theta_{i+1}\wedge\bar{\nu}_{k}}[U(\theta_{i+1}\wedge\bar{\nu}_{k})].$$
By Eq. \eqref{TEXEQuatioNN5353}, 
$\rho_{\theta_{i}\wedge \bar{\nu}_{k}, \theta_{i+1}\wedge\bar{\nu}_{k}}[U(\theta_{i+1}\wedge\bar{\nu}_{k})] = U(\theta_{i} \wedge \bar{\nu}_{k}).$
Hence, 
$\rho_{\theta_{i} \wedge\bar{\nu}_{k}, \bar{\nu}_{k}}[U(\bar{\nu}_{k})] = U(\theta_{i} \wedge \bar{\nu}_{k}),$
which completes the reasoning by backward induction.\\
We conclude that the family $U$ is a $(\Theta, \rho)$-martingale on $[\theta_{k}, \bar{\nu}_{k}]$.
\end{proof}
\noindent
We will now show that $U$ coincides with $V$, that $\bar{\nu}_{k}$ is optimal for the optimal stopping problem from time $\theta_{k}$-perspective, and that $\bar{\nu}_{k} = \nu_{k}$, where 
$${\nu}_{k} \coloneqq \essinf \mathcal{{A}}_{k}, \; \; \; \text{where $\mathcal{{A}}_{k} \coloneqq \{\tau \in \Theta_{\theta_{k}}: V(\tau) = \xi(\tau) \text{ a.s.} \}$.}$$

\noindent
 For this, we  do not need any type of  (Fatou)continuity assumption on $\rho$. 

\begin{theorem}
Under the assumptions of admissibility (ii), knowledge preservation (iii), monotonicity (iv), consistency (v), and  ``generalized zero-one law'' (vi) on the non-linear operators, we have\\
[0.2cm]
1. $U(\theta_{k}) = \rho_{\theta_{k}, \bar{\nu}_{k}}[\xi(\bar{\nu}_{k})] = V(\theta_{k}).$\\
[0.2cm]
2. $U = V$ and $\bar{\nu}_{k} = \nu_{k}$.

\end{theorem}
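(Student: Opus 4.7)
The plan is to deduce both statements by piecing together the results already established for $U$ in the finite-horizon setting, without needing any new (Fatou-type) continuity on~$\rho$.

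\emph{Statement 1.} First I would establish the martingale equality at $\bar\nu_k$. By Lemma \ref{TEXleMMa4242}, the family $U$ is a $(\Theta,\rho)$-martingale on the stochastic interval $[\theta_k,\bar\nu_k]$; in particular
$$U(\theta_k)=\rho_{\theta_k,\bar\nu_k}[U(\bar\nu_k)].$$
From the explicit description \eqref{TEXEQuaTioN383838} of $\bar\nu_k$ as the first time, among $\theta_k,\dots,\theta_n$, at which $U$ coincides with $\xi$, combined with the admissibility of $U$ and of $\xi$ (via the ``extension'' \eqref{TEXequation3232} and Definition \ref{def.admi}), I get $U(\bar\nu_k)=\xi(\bar\nu_k)$ a.s. This yields
$$U(\theta_k)=\rho_{\theta_k,\bar\nu_k}[\xi(\bar\nu_k)].$$

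\emph{Comparison with $V(\theta_k)$.} Next I would compare $U(\theta_k)$ with $V(\theta_k)$ by a two-sided inequality. Since $\bar\nu_k\in\Theta_{\theta_k}$, the definition of $V$ as an essential supremum gives immediately
$V(\theta_k)\geq \rho_{\theta_k,\bar\nu_k}[\xi(\bar\nu_k)]=U(\theta_k).$
For the converse inequality, Lemma \ref{Lemma_finite} states that $U$ is a $(\Theta,\rho)$-supermartingale dominating $\xi$. Hence, for each $\tau\in\Theta_{\theta_k}$, the supermartingale inequality and the monotonicity of $\rho_{\theta_k,\tau}$ yield
$$U(\theta_k)\geq \rho_{\theta_k,\tau}[U(\tau)]\geq \rho_{\theta_k,\tau}[\xi(\tau)].$$
Taking the essential supremum over $\tau\in\Theta_{\theta_k}$ gives $U(\theta_k)\geq V(\theta_k)$, and Statement 1 follows.

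\emph{Statement 2.} To lift $U(\theta_k)=V(\theta_k)$ from the time points $\theta_k$ to an arbitrary $\tau\in\Theta$, I would invoke the admissibility of both families: $V$ is admissible by Lemma \ref{Lemma_admissible}, and $U$ is admissible by construction, via \eqref{TEXequation3232} and the identity $U(\theta_k)=U(\theta_{k+1})$ on $\{\theta_k=\theta_{k+1}\}$ observed just after \eqref{defSnell}. Writing $\tau=\sum_{k=0}^n\theta_k\mathbbm{1}_{A_k}$, admissibility yields $U(\tau)=\sum_{k}U(\theta_k)\mathbbm{1}_{A_k}=\sum_{k}V(\theta_k)\mathbbm{1}_{A_k}=V(\tau)$, so $U=V$ on $\Theta$. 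Once $U=V$, the defining sets $\mathcal{A}_k$ and $\bar{\mathcal{A}}_k$ coincide, and hence $\nu_k=\bar\nu_k$.

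The delicate point is not any hard estimate but the careful handling of $U(\bar\nu_k)=\xi(\bar\nu_k)$: since $\bar\nu_k$ is a \emph{Bermudan} stopping time whose values lie in the finite set $\{\theta_k,\dots,\theta_n\}$ and since, on each set $\{\bar\nu_k=\theta_l\}\in\cf_{\theta_l}$, the definition \eqref{TEXEQuaTioN383838} forces $U(\theta_l)=\xi(\theta_l)$, the admissibility of $U$ and $\xi$ then transports this equality to $\bar\nu_k$. Everything else is a direct concatenation of Lemma \ref{Lemma_finite}, Lemma \ref{TEXleMMa4242}, and the supermartingale inequality for $U$.
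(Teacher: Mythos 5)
Your proposal is correct and follows essentially the same route as the paper: the inequality $U(\theta_k)\geq V(\theta_k)$ via the supermartingale property of $U$ (Lemma \ref{Lemma_finite}), domination of $\xi$ and monotonicity, the reverse inequality via the $(\Theta,\rho)$-martingale property on $[\theta_k,\bar\nu_k]$ (Lemma \ref{TEXleMMa4242}) together with $U(\bar\nu_k)=\xi(\bar\nu_k)$ from \eqref{TEXEQuaTioN383838}, and then Statement 2 by the admissibility of $U$ and $V$. The only cosmetic difference is that you re-derive $U(\bar\nu_k)=\xi(\bar\nu_k)$ from the sets $\{\bar\nu_k=\theta_l\}$ and admissibility, whereas the paper records this equality directly in the course of establishing \eqref{TEXEQuaTioN383838}; both are equivalent and no continuity assumption on $\rho$ is needed in either version.
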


\begin{proof}
As $U \geq \xi$, and as $U$ is a $(\Theta, \rho)$-supermartingale (cf. Lemma \ref{Lemma_finite}), we have for any $\tau \in \Theta_{\theta_{k}}$,
$U(\theta_{k}) \geq \rho_{\theta_{k}, \tau}[U(\tau)] \geq \rho_{\theta_{k}, \tau}[\xi(\tau)],$
where we have used the monotonicity of $\rho_{\theta_{k}, \tau}$ for the second inequality.\\
Hence, 
\begin{equation} \label{TEXEQuationN5656}
U(\theta_{k}) \geq \esssup_{\tau \in \Theta_{\theta_{k}}}\rho_{\theta_{k}, \tau}[\xi(\tau)] = V(\theta_{k}).
\end{equation}
On the other hand, by Lemma \ref{TEXleMMa4242}, $U$ is a $(\Theta, \rho)$-martingale on $[\theta_{k}, \bar{\nu}_{k}]$. Moreover, by Eq. \eqref{TEXEQuaTioN383838}, we have $U(\bar{\nu}_{k}) = \xi(\bar{\nu}_{k})$. Hence,
$$U(\theta_{k}) = \rho_{\theta_{k}, \bar{\nu}_{k}}[U(\bar{\nu}_{k})] = \rho_{\theta_{k}, \bar{\nu}_{k}}[\xi(\bar{\nu}_{k})] \leq \esssup_{\tau \in \Theta_{\theta_{k}}}\rho_{\theta_{k}, \tau}[\xi(\tau)] = V(\theta_{k}).$$
We have thus showed:
$U(\theta_{k}) = V(\theta_{k}) = \rho_{\theta_{k}, \bar{\nu}_{k}}[\xi(\bar{\nu}_{k})],$
which proves statement 1 of the lemma.\\
Now, let us show statement 2. 
By admissibility of $U$ and $V$, it follows from statement 1, that, for any $\tau \in \Theta$, $U(\tau) = V(\tau)$.
Hence, $\bar{\nu}_{k} = \nu_{k}$ (from the definitions of $\bar{\nu}_{k}$ and $\nu_{k}$), which proves statement 2 of the lemma.
\end{proof}






\end{document}